\newcommand{\la}{\lambda}
\newcommand{\ul}[1]{\underline{#1}}
\DeclareMathOperator{\spn}{span}
\DeclareMathOperator{\syn}{Syn}
\DeclareMathOperator{\Id}{Id}
\theoremstyle{plain}
\newtheorem{thr}{Theorem}[section]
\newtheorem{lem}[thr]{Lemma}
\theoremstyle{definition}
\newtheorem{defi}[thr]{Definition}
\newtheorem{ex}[thr]{Example}
\theoremstyle{remark}
\newtheorem{remk}{Remark}
\theoremstyle{remark}
\newcommand{\field}[1]{\mathbb{#1}}
\newcommand{\R}{\field{R}}
\newcommand{\N}{\field{N}}
\newcommand{\F}{\field{F}}
\definecolor{wred}{rgb}{0.7,0.18,0.12}
\definecolor{wgreen}{rgb}{0.1,0.53,0.37}
\numberwithin{equation}{section}
\def\barray{\begin{eqnarray*}}             \def\earray{\end{eqnarray*}}
\def\beq{\begin{equation}} \def\eeq{\end{equation}}
\title{Center manifolds of coupled cell networks}
\author{Eddie Nijholt\footnote{\mbox{Department of Mathematics, VU University Amsterdam, The Netherlands, \href{mailto:eddie.nijholt@gmail.com}{eddie.nijholt@gmail.com} }  }, Bob Rink\footnote{ \mbox{Department of Mathematics, VU University Amsterdam, The Netherlands, \href{mailto:b.w.rink@vu.nl}{b.w.rink@vu.nl} }} {} and Jan Sanders\footnote{ \mbox{Department of Mathematics, VU University Amsterdam, The Netherlands, \href{mailto:jan.sanders.a@gmail.com}{jan.sanders.a@gmail.com} }}}
\date{\today}
\begin{document}

\maketitle

\begin{abstract}
Dynamical systems with a network structure can display anomalous bifurcations as a generic phenomenon. As an explanation for this it has been noted that homogeneous networks can be realized as quotient networks of so-called fundamental networks. The class of admissible vector fields for these fundamental networks is equal to the class of equivariant vector fields of the regular representation of a monoid. Using this insight, we set up a framework for center manifold reduction in fundamental networks and their quotients. We then use this machinery to explain the difference in generic bifurcations between three example networks with identical spectral properties and identical robust synchrony spaces.
\end{abstract}

\section{Introduction}\label{sec0}
 
Network dynamical systems play an important role in many of the sciences, with applications ranging from population dynamics to neuron networks and from electrical circuits to the world wide web. Although they have sparked an overwhelming amount of research, many questions on network systems remain open or have unsatisfactory answers. This is partly due to the fact that network structure, although perfectly well-defined, often seems to deny a versatile geometric description.  Most striking in this respect is the fact that a specific network structure is not usually preserved under coordinate changes. 

A remarkable phenomenon that distinguishes network dynamical systems from arbitrary dynamical systems is synchronisation \cite{pikovsky}. Synchronisation occurs when the agents of a network behave in unison. An example is the simultaneous firing of neurons. This paper is concerned with synchrony breaking. This is the phenomenon that less synchronous states may emerge from synchronous states as model parameters vary. It has been observed that synchrony breaking is often governed by very unusual bifurcation scenarios \cite{bifurcations, anto4, jeroen,  dias, elmhirst, krupa, synbreak2, curious, claire, leite, synbreak}. It is a major challenge to explain why this occurs, and to provide an efficient methodology for the computation of these bifurcations.

One successful method for the  analysis of synchrony in networks is based on the so-called \textit{groupoid formalism}. This formalism  was developed by Golubitsky and Stewart et al. \cite{golstew, torok, stewartnature, pivato}  and recently reformulated in the language of \textit{graph fibrations} by Deville and Lerman \cite{deville}. See also \cite{field} for results in a similar spirit. We shall work with a different framework though, that appears more suitable for bifurcation theory. In fact, we shall use the language of \textit{hidden symmetry} as described in \cite{fibr, RinkSanders3, CCN}. This is inspired by the fact that many of the characteristics of synchrony breaking, including the existence of robust invariant spaces, degenerate eigenvalues and anomalous generic bifurcations, are quite prevalent in the setting of ODEs with symmetry. More precisely, our formalism exploits the fact that every homogeneous coupled cell network can be realised as the quotient network of a so-called \textit{fundamental network}. This latter network admits a purely geometric characterisation, as its admissible vector fields are exactly the equivariant vector fields of the regular representation of a monoid (a monoid is a semigroup with unit). 

It is well-known that all the bounded solutions that emerge from a synchronous steady state through a synchrony breaking bifurcation, are contained in a so-called local center manifold. The main result of this paper is a center manifold theorem for homogeneous networks. It states that network structure can somehow be preserved under center manifold reduction. This means in particular that the dynamics on the center manifold is restricted by symmetry. Theorem \ref{stellingintro} will be formulated more precisely as theorems \ref{T5} and \ref{T7}. 

\begin{thr}\label{stellingintro}
Let $\Gamma$ be an admissible vector field for a fundamental network with symmetry monoid $\Sigma$ and let $x_0$ be a fully synchronous steady state of $\Gamma$. Then there exists a $\Sigma$-invariant local center manifold $\mathcal{M}^c$ for $\Gamma$ near $x_0$. 

The restriction $\Gamma|_{\mathcal{M}^c}$ to this local center manifold is $\Sigma$-equivariant and can therefore be interpreted as an admissible vector field  in an appropriate way.  

A center manifold of each quotient of the fundamental network is contained  in the  center manifold of the fundamental network as a robust synchrony space.
\end{thr}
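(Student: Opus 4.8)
The plan for the third statement is to realise each quotient as a robust synchrony space of the fundamental network and then to show that the very center manifold $\mathcal{M}^c$ constructed above, intersected with that synchrony space, \emph{is} a center manifold of the quotient. In the monoid formalism a quotient network $Q$ corresponds to a $\Sigma$-invariant linear subspace $V \subseteq \widetilde{V} := (\R^k)^{\Sigma}$ that is invariant under every admissible (i.e.\ $\Sigma$-equivariant) vector field, the admissible vector fields of $Q$ being exactly the restrictions $\Gamma'|_V$. Since $x_0$ is fully synchronous it is fixed by the $\Sigma$-action and lies in every synchrony space, so after translating $x_0$ to the origin --- which preserves both the $\Sigma$-action and $V$ --- I may assume $x_0 = 0$, and $\gamma := \Gamma|_V$ is an admissible vector field of $Q$ with steady state $0$.

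First I would record that $L := D\Gamma(0)$ is $\Sigma$-equivariant and maps $V$ into $V$, so its center and hyperbolic spectral subspaces $E^c$ and $E^{su}$ are $\Sigma$-invariant and, their Riesz projections being polynomials in $L$, split $V$ compatibly as $V = (E^c \cap V) \oplus (E^{su}\cap V)$, with $E^c\cap V$ the center subspace of $D\gamma(0)$. The heart of the argument is then to build $\mathcal{M}^c$ so that it restricts correctly to $V$. Recall that $\mathcal{M}^c$ is obtained as the graph $\{v + h(v) : v \in E^c\}$ of the unique bounded solution of a Lyapunov--Perron fixed point equation $h = \mathcal{T}(h)$ for a globally modified field $\widetilde{\Gamma}(x) = Lx + \chi(P^c x)\,(\Gamma(x) - Lx)$, with $P^c$ the projection onto $E^c$ and $\chi$ a cut-off supported near $0$. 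Taking $\chi$ to be $\Sigma$-invariant makes $\widetilde{\Gamma}$ equivariant --- which is what yields the $\Sigma$-invariance of $\mathcal{M}^c$ in the first part of the theorem --- and, since $L$, $P^c$ and $\Gamma - L$ all preserve $V$, the same $\chi$ makes $\widetilde{\Gamma}$ preserve $V$, so that $\widetilde{\Gamma}|_V$ is exactly the modified field one would use to build a center manifold for $\gamma$. Hence $\mathcal{T}$ is natural with respect to the inclusion $V \hookrightarrow \widetilde{V}$: it preserves the set of equivariant $h$ with $h(E^c\cap V) \subseteq E^{su}\cap V$ and there coincides with the corresponding operator for $\gamma$. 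Iterating from $h \equiv 0$ shows the fixed point $h$ satisfies $h(E^c\cap V)\subseteq E^{su}\cap V$ and that $h|_{E^c\cap V}$ is the graph map of a center manifold $\mathcal{M}^c_Q$ of $\gamma$; taking preimages under $P^c$ then gives $\mathcal{M}^c\cap V = \mathcal{M}^c_Q$. The routine dimension, tangency and invariance checks (inherited from the corresponding facts for $\mathcal{M}^c$ together with the $\Gamma$-invariance of $V$) confirm that $\mathcal{M}^c\cap V$ is a local center manifold of $Q$, contained in $\mathcal{M}^c$.

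Finally, to see that $\mathcal{M}^c_Q = \mathcal{M}^c\cap V$ sits in $\mathcal{M}^c$ as a robust synchrony space, I would use that in the coordinates of $E^c$ the network structure on $\mathcal{M}^c$ furnished by the second statement has admissible vector fields among the $\Sigma$-equivariant maps of $E^c$, so that a robust synchrony space of $\mathcal{M}^c$ is a $\Sigma$-invariant subspace of $E^c$ left invariant by all such maps; the subspace corresponding to $\mathcal{M}^c\cap V$ is $E^c\cap V$. Given any $\Sigma$-equivariant $\phi\colon E^c\to E^c$, the extension $\widetilde{\phi} := \phi\circ P^c$ is $\Sigma$-equivariant on $\widetilde{V}$ (as $E^c$ and $E^{su}$ are subrepresentations), hence admissible for the fundamental network, hence preserves the robust synchrony space $V$; since $\widetilde{\phi}$ agrees with $\phi$ on $E^c$ and $P^c$ is the identity on $E^c\cap V$, this forces $\phi(E^c\cap V)\subseteq E^c\cap V$, which is what we need.

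The main obstacle I anticipate is the simultaneity in the middle step: a single center manifold $\mathcal{M}^c$ must be made $\Sigma$-invariant \emph{and}, at the same time, restrict to a genuine center manifold on \emph{every} robust synchrony space. Because center manifolds are not unique this cannot be repaired after the fact; it has to be designed into the construction, and the point is that both demands follow from one and the same property of the modified field --- that $\widetilde{\Gamma}$ is equivariant and leaves each $V$ invariant --- which holds precisely because the cut-off is taken to depend only on the $\Sigma$-equivariantly projected center component $P^c x$. Everything after that (checking that $\mathcal{T}$ really commutes with restriction to $V$, and the standard verifications certifying $\mathcal{M}^c\cap V$ as a center manifold of $\gamma$) is bookkeeping.
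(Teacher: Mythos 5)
The central step of your construction fails. You build the modified field $\widetilde{\Gamma}(x) = Lx + \chi(P^c x)\,(\Gamma(x)-Lx)$ and assert that $\chi$ can be ``taken to be $\Sigma$-invariant,'' so that $\widetilde{\Gamma}$ is equivariant. For a fundamental network $\Sigma$ is only a monoid, and its elements act by \emph{noninvertible} linear maps; a $\Sigma$-invariant function is then typically forced to be constant on large subspaces, so no nontrivial invariant cut-off exists. This is not a technicality one can wave away: for network $\mathbf{A}$ the symmetry $(X_1,X_2,X_3)\mapsto(X_2,X_3,X_3)$ forces any invariant $\chi$ on $\R^3$ to be constant along $\{X_2=X_3=0\}$, and restricting to the center subspace does not help — in that example the restriction of the symmetry to $E^c=\{X_3=0\}$ is the nilpotent map $(X_1,X_2)\mapsto(X_2,0)$, and invariance under it forces $\chi(X_1,X_2)=\chi(X_2,0)=\chi(0,0)$, i.e.\ $\chi$ constant on all of $E^c$. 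Writing out the equivariance condition for your $\widetilde{\Gamma}$ shows that exactly this full invariance of $\chi$ on $E^c$ is what you need, so the modified field you propose cannot be made equivariant, and with it the $\Sigma$-invariance of $\mathcal{M}^c$, the equivariance of the reduced dynamics, and the compatibility with every synchrony space $V$ all collapse. The paper's whole point is to get around precisely this obstruction: instead of cutting off the vector field, one cuts off the nonlinear part $g$ of the \emph{response function} and forms $\ul{\Gamma}_{h+g_\rho}$, which is again a fundamental network vector field and hence automatically $\Sigma$-equivariant (theorems \ref{T2} and \ref{T3}); invariance of the resulting global center manifold under the noninvertible symmetries is then obtained from the dynamical characterisation $M_c=\{x:\sup_t\|\pi_h\phi^t(x)\|<\infty\}$ (lemma \ref{L1}), not from an invariant cut-off.

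The remainder of your argument is closer to the mark. The compatibility of the splitting $V=(E^c\cap V)\oplus(E^{su}\cap V)$ via Riesz projections, and especially your final paragraph showing that $E^c\cap V$ is preserved by every equivariant map of $E^c$ because $\phi\circ P^c$ is an admissible (fundamental network) vector field on the ambient space, is essentially the paper's argument for theorems \ref{T6} and \ref{T7}. But as it stands those steps are conditioned on a cut-off that does not exist, so the proof has a genuine gap rather than merely a different route.
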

\noindent Theorem \ref{stellingintro} is reminiscent of the well-known fact that the local center manifold of an ODE with a compact symmetry group can be assumed symmetric \cite{field3, field4, constrainedLS, perspective, golschaef2, vdbauw}. The proof of this  latter result strongly depends on the fact that every compact group has an invariant measure, and hence this proof does not apply to semigroups and monoids. Our proof of theorem \ref{stellingintro} shows that this technical problem can be overcome for fundamental networks.

The remainder of this paper is organised as follows. In Section \ref{sec1} we illustrate the impact of hidden network symmetry at the hand of three examples. In Section \ref{sec2} we introduce our general setup, and recall some basic theorems on which this paper builds. In Section \ref{sec3} we prove a center manifold theorem for fundamental networks. After this, Sections \ref{sec4} and \ref{sec5} are concerned with the symmetry and synchrony properties that are preserved under center manifold reduction. Finally, in Section \ref{sec6} we apply our general results to three examples.

\subsection*{Acknowledgement}
The authors would like to thank Andr\'e Vanderbauwhede for explaining several subtleties of center manifold reduction, and for an inspiring and encouraging  discussion on the subject. We also thank Florian Noethen for pointing out several mistakes in an early version of this manuscript.

\section{Three examples}\label{sec1}
 To illustrate the impact of hidden symmetry, let us consider the following three networks. They will be the leading examples of this paper.
\begin{center}
\begin{tikzpicture}[scale=1.2]
\tikzstyle{every node} = [ draw, circle, fill={rgb:red,1;green,2;blue,6;white,20}, anchor=center]
\node (1) at ( -3.87, -0.5) {1};
\node (2) at ( -2.13, -0.5) {2};
\node (3) at (-3,1) {3};
\node (A) [draw=none, fill=none] at (-3,0.1) {\huge{\bf A}};
\path[ ->, line width=1.7pt,  color={rgb:red,12; blue,3; green, 2 ;white,15}]
	(2) edge[bend left=30] (1)
	(3) edge[bend left=40] (2)
	(3) edge[loop, in=130, out=180,  distance=7mm] (3);
\path[ ->, ,line width=1.7pt,  color={rgb:red,1; blue,1; green, 6 ;black,4;white,18}]
	(3) edge[bend right=20] (1)
	(3) edge[bend left=-10 ] (2)
	(3) edge[loop, in=30, out=80, min distance = 7mm ] (3);
\node (1) at ( -0.87, -0.5) {1};
\node (2) at ( 0.87, -0.5) {2};
\node (3) at (0,1) {3};
\node (B) [draw=none, fill=none] at (0,0.1) {\huge{\bf B}};
\path[ ->, line width=1.7pt,  color={rgb:red,12; blue,3; green, 2 ;white,15}]
	(2) edge[bend left=30] (1)
	(3) edge[bend left=40] (2)
	(3) edge[loop, in=130, out=180,  distance=7mm] (3);
\path[ ->, ,line width=1.7pt, color={rgb:red,1; blue,1; green, 6 ;black,4;white,18}]
	(2) edge[bend right=20] (1)
	(2) edge[loop, in=-50, out=0,  distance=7mm] (2)
	(2) edge[bend left= 15] (3);
\path[ ->, ,line width=1.7pt,  color={blue}]
	(3) edge[bend right=20] (1)
	(3) edge[bend left=10 ] (2)
	(3) edge[loop, in=30, out=80, min distance = 7mm ] (3);
\node (1) at ( 2.63, -0.5) {1};
\node (2) at ( 4.37, -0.5) {2};
\node (3) at (3.5,1) {3};
\node (C) [draw=none, fill=none] at (3.46,0.1) {\huge{\bf C}};
\path[ ->, line width=1.7pt,  color={rgb:red,12; blue,3; green, 2 ;white,15}]
	(2) edge[bend left=30] (1)
	(3) edge[bend left=40] (2)
	(3) edge[loop, in=130, out=180,  distance=7mm] (3);
\path[ ->, ,line width=1.7pt, color={rgb:red,1; blue,1; green, 6 ;black,4;white,18}]
	(1) edge[loop, distance=7mm, in = 170, out = 220] (1)
	(1) edge[bend left = -5  ] (2)
	(1) edge[bend left= -10] (3);
\path[ ->, ,line width=1.7pt,  color={blue}]
	(3) edge[bend right=20] (1)
	(3) edge[bend left=10 ] (2)
	(3) edge[loop, in=30, out=80, min distance = 7mm ] (3);
\path[ ->, ,line width=1.7pt,  color={brown}]
	(2) edge[bend right=20] (1)
	(2) edge[loop, in=-50, out=0,  distance=7mm] (2)
	(2) edge[bend left= 15] (3);
\end{tikzpicture}
\end{center}
 Networks {\bf A}, {\bf B} and {\bf C} give rise to the following ordinary differential equations.

\hspace{-.7cm} \begin{minipage}[c]{0.5\textwidth} 
\begin{equation*} \begin{split} 
\dot{x}_1 &= f(x_1,  \textcolor{wred}{x_{2}}, \textcolor{wgreen}{x_3}, \la) \\
{\bf A}: \quad  \dot{x}_2 &= f(x_2, \textcolor{wred}{x_{3}}, \textcolor{wgreen}{x_3}, \la) \quad \\
\dot{x}_3 &=f(x_3,  \textcolor{wred}{x_3}, \textcolor{wgreen}{x_3},  \la)
\end{split}
\end{equation*} 
\end{minipage}
\begin{minipage}[c]{0.5\textwidth} 
\begin{equation*} \ 
\begin{split} 
\dot{x}_1 &= f(x_1,  \textcolor{wred}{x_{2}}, \textcolor{wgreen}{x_2}, \textcolor{blue}{x_3},  \la) \\
{\bf B}: \quad \dot{x}_2 &= f(x_2, \textcolor{wred}{x_{3}}, \textcolor{wgreen}{x_2}, \textcolor{blue}{x_3}, \la) \quad \\
\dot{x}_3 &=f(x_3,  \textcolor{wred}{x_3}, \textcolor{wgreen}{x_2}, \textcolor{blue}{x_3}, \la)  
\end{split}
\end{equation*} 
\end{minipage} \\
\begin{minipage}[c]{0.61 \textwidth} \space
\begin{equation}\label{ex1}
\begin{split}
\dot{x}_1 &= f(x_1,  \textcolor{wred}{x_{2}}, \textcolor{wgreen}{x_1}, \textcolor{blue}{x_3}, \textcolor{brown}{x_2}, \la) \\
{\bf C}: \quad  \dot{x}_2 &= f(x_2, \textcolor{wred}{x_{3}}, \textcolor{wgreen}{x_1}, \textcolor{blue}{x_3}, \textcolor{brown}{x_2}, \la) \\
\dot{x}_3 &=f(x_3,  \textcolor{wred}{x_3}, \textcolor{wgreen}{x_1}, \textcolor{blue}{x_3}, \textcolor{brown}{x_2}, \la)
\end{split}
\end{equation}  
\end{minipage}
\\ \mbox{} \vspace{2mm}\\
Here, $x_1, x_2, x_3\in \R$ describe the states of the cells in the network, while $\la \in \R$ is a parameter. We shall assume that the ``response function'' $f:\R^3 \times \R \rightarrow \R$ is smooth. Note that the network structure does not change as $\la$ varies. Instead, one could think of the response function $f$ as being variable in $\la$. 

The ODEs in (\ref{ex1}) have several properties that distinguish them from arbitrary three-dimensional dynamical systems. First of all, one can observe that setting $x_1=x_2=x_3$ in (\ref{ex1}) yields that $\dot x_1 = \dot x_2=\dot x_3$, and similarly that $x_2=x_3$ implies $\dot x_2=\dot x_3$. This means that 
in all three networks the {\it polydiagonal subspaces} or \textit{synchrony subspaces} 
$$\{x_1 = x_2 = x_3\} \ \mbox{and} \ \{x_2 = x_3\}$$ 
are preserved under the dynamics (i.e. they are flow-invariant). In particular, this is true for any choice of response function $f$, so that these invariant subspaces only depend  on the network structure of the ODEs. One therefore calls them \textit{robust} synchrony spaces. It can also be checked that the above two synchrony spaces are the only such robust synchrony spaces (in all three examples). 

One may now ask how synchronous solutions emerge or disappear in a local bifurcation. We will answer this question in Section \ref{sec6} by means of center manifold reduction, but we shall indicate a few important aspects of this method already here.  First of all, let us assume that 
$$f(0,0)=0\, ,$$ 
so that $x=0$ is a fully synchronous steady state for the parameter value $\la=0$. Center manifold reduction starts with computing the center subspace at $(x,\lambda)=(0,0)$. This space is determined by the Jacobian matrices of the ODEs in (\ref{ex1}). Let us write $\gamma^{\bf i}_f(x,\la)$ (${\bf i} = {\bf A}, {\bf B}, {\bf C}$) for the vector fields at the right hand side of (\ref{ex1}), and let us set  $a := D_{x_1}f(0, 0), \textcolor{wred}{b} := D_{x_2}f(0, 0)$, $\textcolor{wgreen}{c} := D_{x_3}f(0, 0)$, $\textcolor{blue}{d} := D_{x_4}f(0, 0)$ and $\textcolor{brown}{e} := D_{x_5}f(0, 0)$. In terms of these quantities, the Jacobian matrices  are  given by
 
\begin{minipage}[t]{0.45\textwidth}
\begin{align*}
D_x\gamma^{\bf A}_f(0, 0) =  \begin{pmatrix*}[l]
 a & \textcolor{wred}{b} & \textcolor{wgreen}{c}\\
 0 & a & \textcolor{wred}{b} + \textcolor{wgreen}{c} \\
 0 & 0 & a + \textcolor{wred}{b} + \textcolor{wgreen}{c}
 \end{pmatrix*} 
 \end{align*}
\end{minipage}
\begin{minipage}[t]{0.52\textwidth}
 \begin{align*}
D_x\gamma^{\bf B}_f(0, 0) = \begin{pmatrix*}[l]
 a & \textcolor{wred}{b} + \textcolor{wgreen}{c} & \textcolor{blue}{d} \\
 0 & a + \textcolor{wgreen}{c}& \textcolor{wred}{b} +  \textcolor{blue}{d}  \\
 0 & \textcolor{wgreen}{c} & a+ \textcolor{wred}{b} +  \textcolor{blue}{d} 
 \end{pmatrix*} 
 \end{align*}
\end{minipage} \hfill
\begin{minipage}[t]{0.72\textwidth}
 \begin{align}\label{ex2}
D_x\gamma^{\bf C}_f(0, 0) = \begin{pmatrix*}[l]
 a + \textcolor{wgreen}{c} & \textcolor{wred}{b}  +  \textcolor{brown}{e} &  \textcolor{blue}{d}\\
  \textcolor{wgreen}{c} & a + \textcolor{brown}{e} & \textcolor{wred}{b} + \textcolor{blue}{d} \\
  \textcolor{wgreen}{c} &  \textcolor{brown}{e} & a+ \textcolor{wred}{b} + \textcolor{blue}{d}
 \end{pmatrix*}\quad
 \end{align}
\end{minipage} \\
\vspace{3mm}

\noindent We may now observe the remarkable fact that all three Jacobian matrices in (\ref{ex2}) have a double real eigenvalue equal to $a$. If we furthermore assume that $ \textcolor{wred}{b}\neq 0$ and that $\textcolor{wred}{b} + \textcolor{wgreen}{c} \not= 0$ (for network {\bf A}),  $\textcolor{wred}{b} + \textcolor{wgreen}{c}+ \textcolor{blue}{d} \not= 0$ (for network {\bf B}), $\textcolor{wred}{b} + \textcolor{wgreen}{c}+ \textcolor{blue}{d} + \textcolor{brown}{e} \not= 0$ (for network {\bf C}), then this eigenvalue $a$ has algebraic multiplicity two and geometric multiplicity one (this is again true in all three examples). Such a degeneracy in the spectrum is very exceptional among Jacobian matrices of arbitrary ODEs, but here it is forced by the network structure. In particular, it implies that the center manifold of the ODEs is two-dimensional as soon as $a=0$, which in turn indicates that a quite complicated bifurcation may occur.
Using center manifold reduction we shall verify in Section \ref{sec6} that networks {\bf A}, {\bf B} and {\bf C} can generically support precisely one type of steady state bifurcation when the eigenvalue $a$ crosses zero. It is a so-called  {\it synchrony breaking bifurcation} in which a fully synchronous branch, a partially synchronous branch and a fully non-synchronous branch of steady states emerge. Table \ref{tabelsg} lists the asymptotic growth rates of these branches in $\la$, and their possible stability types. Note that although network ${\bf C}$ has identical synchrony and spectral properties as networks ${\bf A}$ and ${\bf B}$, it admits a totally different generic synchrony breaking steady state bifurcation. In particular, in network ${\bf C}$ the stability of the fully synchronous branch may be transferred either to the partially synchronous branch or to the fully non-synchronous branch.  Another curiosity is that the non-synchronous branch of network ${\bf C}$ is  tangential to the space $\{ x_2 = x_3\}$, i.e. it is partially synchronous to first order in $\la$ (this will be shown in Section \ref{sec6}). 
\begin{table}\label{tabelsg} 
\begin{center}
\begin{tabular}{|l || c | c | c |}
\hline
  &  \multicolumn{3}{ | c |}{Branches in examples ${\bf A}$ and ${\bf B}$}   \\ \hline
Synchrony &  Asymptotics & $\la < 0$ & $\la >0$   \\ \hline
Full & $\sim \la$ & $- -$ & $++$   \\ 
Partial & $\sim \la$ & $+-$ & $-+$   \\
None & $\sim \sqrt{\la}$ & & $+-$, $- -$  \\ 
\hline
\end{tabular}
\\ \mbox{} \vspace{2mm}\\
\begin{tabular}{|l || c | c | c || c | c || c | c |}
\hline
  &    \multicolumn{7}{  | c| }{Branches in example ${\bf C}$} \\ \hline
Synchrony &    Asymptotics & $\la < 0$ & $\la >0$ &  $\la < 0$ & $\la >0$ &  $\la < 0$ & $\la >0$ \\ \hline
Full &  $\sim \la$ & $--$& $++$ & $--$ & $++$ & $--$ & $++$  \\ 
Partial &   $\sim \la$& $++$ & $--$ & $+-$ & $-+$ & $+-$ & $-+$ \\
None &   $\sim \la$& $+-$ &$-+$ & $++$ & $--$  & $+-$ & $-+$ \\ 
\hline
\end{tabular}
\end{center}
\caption{Asymptotics in $\la$ of the three branches of steady states that emerge from a synchrony breaking steady state bifurcation in networks ${\bf A}$, ${\bf B}$ and ${\bf C}$. The table also indicates their stability through the signs of two out of three eigenvalues, where for network {\bf C} there are three possible scenarios.}
\end{table}

  We remark that non-trivial invariant subspaces, spectral degeneracies and anomalous bifurcations are all very common in the setting of equivariant dynamics \cite{perspective, golschaef2}, where they are forced by symmetry. On the one hand, it is obvious that networks {\bf A}, {\bf B} and {\bf C} are not symmetric under any permutation of cells. As a result, none of the ODEs in (\ref{ex2}) is equivariant under a linear group action. 
  On the other hand, it was shown in \cite{RinkSanders2} that the robust synchrony spaces, the degenerate spectrum and the unusual bifurcations of networks {\bf A}, {\bf B} and {\bf C} can all be explained from hidden semigroup symmetry.  
  
  For example, the differential equations of network {\bf A} are equivariant under the noninvertible linear map 
$$S: (x_1,x_2,x_3) \mapsto (x_2,x_3,x_3)$$ 
that transforms solutions of the ODEs into solutions. In fact, every vector field that commutes with $S$ is necessarily an admissible vector field for network {\bf A}. This is because network ${\bf A}$ is a so-called fundamental network, see Section \ref{sec2}. Moreover, it is not hard to check that any ODE that admits the symmetry $S$ must have the invariant subspaces $\{x_1=x_2=x_3\}$ and $\{x_2=x_3\}$, and that any matrix that commutes with $S$ must have a double eigenvalue. We will also prove in Section \ref{sec4} that the symmetry $S$ is  inherited by the center manifold of network ${\bf A}$.  The restrictions that  symmetry imposes on the center manifold dynamics force the remarkable synchrony breaking bifurcation of network {\bf A}. 


Similar things can be said for networks {\bf B} and {\bf C}, even though one can show that $\gamma^{\bf B}_f$ and $\gamma^{\bf C}_f$ commute with no linear maps other than the identity. On the other hand, networks {\bf B} and {\bf C} can be realised as quotient networks of networks with semigroup symmetry. In particular, network ${\bf B}$  
is the restriction to the robust synchrony space $\{X_2 = X_3\}$ of the network differential equations

\begin{minipage}[t]{0.55\textwidth}
\begin{equation} \label{vierfun}
\widetilde{\bf B}:\quad 
\begin{array}{ll}
\dot{X}_1 &= f(X_1,  \textcolor{wred}{X_{2}},  \textcolor{wgreen}{X_3}, \textcolor{blue}{X_4}, \la) \\
\dot{X}_2 &= f(X_2,  \textcolor{wred}{X_{4}}, \textcolor{wgreen}{X_3}, \textcolor{blue}{X_4}, \la) \\
\dot{X}_3 &=f(X_3,   \textcolor{wred}{X_4}, \textcolor{wgreen}{X_3}, \textcolor{blue}{X_4}, \la)  \\
\dot{X}_4 &= f(X_4,  \textcolor{wred}{X_{4}}, \textcolor{wgreen}{X_3}, \textcolor{blue}{X_4}, \la) 
\end{array}
\end{equation} 
\end{minipage}
\begin{minipage}[t]{0.35\textwidth} \vspace{-7mm}
\begin{tikzpicture}[scale=1.3]
\tikzstyle{every node} = [ draw, circle, fill={rgb:red,1;green,2;blue,6;white,20}, anchor=center]
\node (2) at ( -1, 1) {2};
\node (1) at ( 1, 1) {1};
\node (4) at (1,-1) {4};
\node (3) at (-1,-1) {3};
\node (B) [draw=none, fill=none] at (0, -0.3) {\huge{$\widetilde{\bf B}$}};
\path[ ->, line width=1.7pt,  color={rgb:red,12; blue,3; green, 2 ;white,15}]
	(2) edge[bend left=20] (1)
	(4) edge[bend left=-10] (2)
	(4) edge[bend left=20] (3)
	(4) edge[loop, in=-50, out=10,  distance=7mm] (4);
\path[ ->, ,line width=1.7pt, color={rgb:red,1; blue,1; green, 6 ;black,4;white,18}]
	(3) edge[bend right=-10] (1)
	(3) edge[bend left=20 ] (2)
	(3) edge[bend left=0 ] (4)
	(3) edge[loop, in=190, out=250, min distance = 7mm ] (3);
\path[ ->, line width=1.7pt, color={blue}]
	(4) edge[bend left=-20] (1)
	(4) edge[bend left=-40] (2)
	(4) edge[bend left=-20] (3)
	(4) edge[loop, in=-120, out=-90,  distance=7mm] (4);
\end{tikzpicture}
\end{minipage}
\\ \mbox{}\\
\noindent These differential equations commute with the two noninvertible linear maps 
\begin{equation}
\begin{split}
&(X_1, X_2, X_3, X_4) \mapsto (X_2, X_4, X_3, X_4)\, , \\ 
&(X_1, X_2, X_3, X_4) \mapsto (X_3, X_4, X_3, X_4) \, .
\end{split}
\end{equation}
Conversely, every ODE that is  equivariant under  these two symmetries is necessarily of the  form (\ref{vierfun}) for some $f(X, \lambda)$, i.e. it is  admissible for network $\widetilde{\bf B}$. We call network $\widetilde{\bf B}$  the \textit{fundamental network} of network ${\bf B}$. It was shown in \cite{fibr} that every homogeneous network is the quotient of such a fundamental network with a semigroup of symmetries. We will recover this fact in Section \ref{sec2}. 

It turns out that the fundamental network of ${\bf C}$ is given by

\begin{minipage}[t]{0.6\textwidth}
\begin{equation} \label{vijffun}
\widetilde{\bf C}:\quad 
\begin{array}{ll}
\dot{X}_1 &= f(X_1,  \textcolor{wred}{X_{2}}, \textcolor{wgreen}{X_3}, \textcolor{blue}{X_4},  \textcolor{brown}{X_5}, \la) \\
\dot{X}_2 &= f(X_2, \textcolor{wred}{X_{4}}, \textcolor{wgreen}{X_3},  \textcolor{blue}{X_4}, \textcolor{brown}{X_5}, \la) \\
 \dot{X}_3 &=f(X_3,  \textcolor{wred}{X_5}, \textcolor{wgreen}{X_3},  \textcolor{blue}{X_4}, \textcolor{brown}{X_5}, \la)  \\
\dot{X}_4 &=f(X_4, \textcolor{wred}{X_{4}}, \textcolor{wgreen}{X_3},  \textcolor{blue}{X_4}, \textcolor{brown}{X_5}, \la) \\
\dot{X}_5 &=f(X_5, \textcolor{wred}{X_{4}}, \textcolor{wgreen}{X_3},  \textcolor{blue}{X_4}, \textcolor{brown}{X_5}, \la) 
\end{array}
\end{equation} 
\end{minipage}
\begin{minipage}[t]{0.35\textwidth} \vspace{-7mm}
\begin{tikzpicture}[scale=1.3]
\tikzstyle{every node} = [ draw, circle, fill={rgb:red,1;green,2;blue,6;white,20}, anchor=center]
\node (1) at ( 0, 1.414) {1};
\node (5) at ( 1.345, 0.437) {5};
\node (3) at (-0.831,-1.144) {3};
\node (4) at (0.831,-1.144) {4};
\node (2) at ( -1.345, 0.437) {2};
\node (C) [draw=none, fill=none] at (0, 0.5) {\huge{$\widetilde{\bf C}$}};
\path[ ->, line width=1.7pt, color={rgb:red,12; blue,3; green, 2 ;white,15}]
	(2) edge[bend left=20] (1)
	(4) edge[bend left=-20] (5)
	(4) edge[bend left=-10] (2)
	(5) edge[bend left=10] (3)
	(4) edge[loop, in=310, out=0,  distance=7mm] (4);
\path[ ->, ,line width=1.7pt, color={rgb:red,1; blue,1; green, 6 ;black,4;white,18}]
	(3) edge[bend right=-30] (1)
	(3) edge[bend right=-10] (5)
	(3) edge[bend left=20 ] (2)
	(3) edge[bend left=-25 ] (4)
	(3) edge[loop, in=190, out=250, min distance = 7mm ] (3);
\path[ ->, ,line width=1.7pt, color={blue}]
	(4) edge[bend right=20] (1)
	(4) edge[bend right=-5] (5)
	(4) edge[bend left=20 ] (2)
	(4) edge[bend left=5 ] (3)
	(4) edge[loop, in=240, out=290, min distance = 7mm ] (4);
\path[ ->, ,line width=1.7pt, color={brown}]
	(5) edge[bend right=20] (1)
	(5) edge[bend right=-50] (4)
	(5) edge[bend left=-30 ] (2)
	(5) edge[bend left=-30 ] (3)
	(5) edge[loop, in=-10, out=40, min distance = 7mm ] (5);		
\end{tikzpicture}
\end{minipage}
\\ \mbox{}\\
\noindent
Indeed, network ${\bf C}$ arises as the restriction of network $\widetilde{\bf C}$ to the robust synchrony space $\{X_1 = X_3, X_2 = X_5\}$. Moreover, the equations of motion (\ref{vijffun}) of network $\widetilde{\bf C}$ are precisely the equivariant ODEs for the noninvertible linear maps  
\begin{equation}
\begin{split}
&(X_1, X_2, X_3, X_4, X_5) \mapsto (X_2, X_4, X_3, X_4, X_5)\, , \\
&(X_1, X_2, X_3, X_4, X_5) \mapsto (X_3, X_5, X_3, X_4, X_5) \, .
\end{split}
\end{equation}
The symmetries of networks $\widetilde{\bf B}$ and $\widetilde{\bf C}$ are inherited by their center manifolds. We will see in Sections \ref{sec5} and \ref{sec6}  how  they in turn affect the center manifolds of  {\bf B} and {\bf C},  thus forcing the anomalous bifurcations in these two original networks.

\section{Homogeneous and fundamental networks}\label{sec2}
In this section we give  a short overview of the results and definitions in \cite{fibr, RinkSanders3, CCN}. 
We shall be concerned with ODEs of the general form
\begin{align}\label{first}
\begin{split}
\dot{x}_1 &= f(x_{\sigma_1(1)}, \dots x_{\sigma_n(1)}) \\
\dot{x}_2 &= f(x_{\sigma_1(2)}, \dots x_{\sigma_n(2)}) \\
 &\vdots \\
\dot{x}_N &=f(x_{\sigma_1(N)}, \dots x_{\sigma_n(N)}) 
\end{split}\, .
\end{align}
Here, every variable $x_j$ takes values in the same vector space $V$ and can be though of as the state of cell $\#j$ in a network. For every $i \in \{1, \dots n\}$, 
$$\sigma_i: \{1, \dots N\}\to \{1, \dots N\}$$ is a function from the collection of cells of the network to itself. Intuitively, these functions may be thought of as representing the different types of input in the network. In particular, if $i \in \{1, \dots n\}$ and $j,k \in \{1, \dots N\}$ are such that $\sigma_i(j) = k$, then this is to be interpreted as cell $\#j$ receiving an input of type $i$ from cell $\#k$. Note that there is no reason to assume that any of the functions $\sigma_i$ is a bijection. 

 The way the inputs of a cell are processed is  determined by the properties of the response function $f: V^n \rightarrow V$, whose different arguments distinguish  different types of input. Note that the same response function appears in every component of  (\ref{first}), meaning that every cell responds equally to its inputs. This may be interpreted as the cells being identical. We therefore say that (\ref{first}) represents a \textit{homogenous coupled cell network}.  
Another assumption we will make is that the total set of input functions 
$$\Sigma := \{\sigma_1, \ldots, \sigma_n\}$$ 
is closed under composition of maps. This is no restriction because one may  add compositions of input functions to $\Sigma$ until this process terminates, see \cite{CCN}. Furthermore, enlarging $\Sigma$ only enlarges the class of admissible vector fields.  Being closed under composition, $\Sigma$ has the structure of a semigroup. To model internal dynamics, we will moreover assume without loss of generality that $\sigma_1$ is the identity on $ \{1, \dots N\}$, making $\Sigma$ in fact a monoid. For $f:V^n\to V$, we will then denote the vector field at the right hand side of equation (\ref{first}) by $$\gamma_f: V^N \rightarrow V^N\, .$$

\begin{ex}\label{inputmapsexample}
Networks {\bf A}, {\bf B} and {\bf C} are examples of homogeneous networks. The maps $\sigma_1, \sigma_2, \sigma_3, \sigma_4, \sigma_5$ are given in this case by
\begin{equation}
 \begin{aligned}\nonumber
 \begin{array}{c|ccc} {\rm \bf A} & 1 & 2 & 3 \\ \hline 
\sigma_1 & 1 & 2  & 3   \\
\sigma_2 & 2 & 3 & 3  \\
\sigma_3 & 3 & 3 & 3  
\end{array} \hspace{.5cm}
   \begin{array}{c|ccc} {\rm \bf B} & 1 & 2 & 3 \\ \hline 
\sigma_1 & 1 & 2  & 3   \\
\sigma_2 & 2 & 3 & 3  \\
\sigma_3 & 2 & 2 & 2 \\  
\sigma_4 & 3 & 3  & 3   
\end{array}
 \hspace{.5cm}
 \begin{array}{c|ccc} {\rm \bf C} & 1 & 2 & 3 \\ \hline 
\sigma_1 & 1 & 2  & 3   \\
\sigma_2 & 2 & 3 & 3 \\
\sigma_3 & 1 & 1 & 1  \\
\sigma_4 & 3 & 3  & 3   \\
\sigma_5 & 2 & 2 & 2  
\end{array}
 \ .
\end{aligned}
\end{equation}
For all three networks, these maps are closed under composition, i.e. they form a semigroup $\Sigma$. \hfill $\triangle$
\end{ex}

\begin{defi}
Let $P = \{P_i\}_{i=1}^r$, $P_i \subset \{1, 2, \ldots, N\}$ be a partition of the collection of nodes of a homogeneous network. The \textit{synchrony space} or \textit{polydiagonal space} corresponding to the partition $P$ is the subspace
$$\syn_P:=\{ x_i = x_j\ \mbox{if} \ i\ \mbox{and}\ j\ \mbox{are in the same element of the partition}\ P\} \subset V^N\, .$$ 
 A synchrony space is called \textit{robust} if for every $f:V^n \rightarrow V$ we have $\gamma_f(\syn_P)\subset \syn_P$, i.e. that it is an invariant space for every $\gamma_f$. 
\end{defi}

\noindent It was shown in \cite{CCN} that adding compositions $\sigma_i \circ \sigma_j$ to $\Sigma$ so as to make $\Sigma$ closed under composition does not affect the set of robust synchrony spaces of the network.

The idea is now to define a bigger network that contains the original network (\ref{first}) as a robust synchrony space. It turns out that the admissible vector fields of this so-called fundamental network are precisely the  equivariant vector fields for the regular representation of the monoid $\Sigma$.

\begin{defi}
Assume that $\Sigma$ has been completed to a monoid, let $n = \#\Sigma$, and let $f:V^n\to V$. The \textit{fundamental network vector field} $\Gamma_f$ of the network  vector field $\gamma_f$ is the vector field on $\bigoplus_{\sigma_i \in \Sigma} V =V^n$ defined by

\begin{equation}
(\Gamma_f)_{\sigma_i} = f \circ A_{\sigma_i} \, .
\end{equation}
Here the linear maps $A_{\sigma_i}: V^n \rightarrow V^n$ are defined by 

\begin{equation}
(A_{\sigma_i}X)_{\sigma_j} := X_{\sigma_j \circ \sigma_i} \, .
\end{equation}
\end{defi}

\noindent It was shown in \cite{fibr} that $\Gamma_f$ is an admissible vector field for the homogeneous network that has the  elements of $\Sigma$ as its cells, and an arrow of type $i$ from cell $\sigma_k$ to cell $\sigma_j$ if $\sigma_i\circ \sigma_j=\sigma_k$. This latter network can be thought of as a Cayley graph of $\Sigma$, see \cite{fibr}.

\begin{thr}\label{repre}
The linear maps $\{A_{\sigma_i}\}_{\sigma_i \in \Sigma}$ form a representation of the monoid $\Sigma$. That is, we have $A_{\sigma_i} \circ A_{\sigma_j} = A_{\sigma_i \circ \sigma_j}$ for all $\sigma_i, \sigma_j \in \Sigma$ and $A_{\sigma_{1}} = {\rm Id}$.
\end{thr}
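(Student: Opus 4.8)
The plan is to verify the two claimed identities directly from the definition $(A_{\sigma_i}X)_{\sigma_j} := X_{\sigma_j \circ \sigma_i}$, treating the elements of $V^n = \bigoplus_{\sigma_k \in \Sigma} V$ as functions $X: \Sigma \to V$, $\sigma_k \mapsto X_{\sigma_k}$, so that $A_{\sigma_i}$ acts by precomposition-on-the-right: $(A_{\sigma_i}X)(\sigma_j) = X(\sigma_j \sigma_i)$. This "pull-back" point of view is what makes the computation transparent.

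First I would check $A_{\sigma_1} = \Id$. Since $\Sigma$ has been completed to a monoid with $\sigma_1$ its unit, $\sigma_j \circ \sigma_1 = \sigma_j$ for every $j$, hence $(A_{\sigma_1}X)_{\sigma_j} = X_{\sigma_j \circ \sigma_1} = X_{\sigma_j}$ for all $j$ and all $X$, i.e.\ $A_{\sigma_1} = \Id$. Next, for the multiplicativity, I would fix $\sigma_i, \sigma_j \in \Sigma$ and an arbitrary $X \in V^n$ and compute, for each $\sigma_k \in \Sigma$,
\begin{align*}
\bigl((A_{\sigma_i} \circ A_{\sigma_j})X\bigr)_{\sigma_k}
&= \bigl(A_{\sigma_i}(A_{\sigma_j}X)\bigr)_{\sigma_k}
= (A_{\sigma_j}X)_{\sigma_k \circ \sigma_i}
= X_{(\sigma_k \circ \sigma_i)\circ \sigma_j}
= X_{\sigma_k \circ (\sigma_i \circ \sigma_j)}
= (A_{\sigma_i \circ \sigma_j}X)_{\sigma_k},
\end{align*}
where the crucial middle step uses associativity of composition of maps $\{1,\dots,N\} \to \{1,\dots,N\}$, and the last step uses that $\sigma_i \circ \sigma_j$ is again an element of $\Sigma$ (this is where closure of $\Sigma$ under composition is needed, so that $A_{\sigma_i\circ\sigma_j}$ is defined). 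Since this holds for every index $\sigma_k$ and every $X$, we conclude $A_{\sigma_i} \circ A_{\sigma_j} = A_{\sigma_i \circ \sigma_j}$.

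There is essentially no obstacle here: the statement is a bookkeeping lemma, and the only subtlety worth flagging explicitly is the order-reversal intuition — one might naively expect a contravariant (anti-homomorphism) behaviour from a right-action-by-precomposition, but because the subscript is $\sigma_j \circ \sigma_i$ rather than $\sigma_i \circ \sigma_j$, the indices stack up in the order $\sigma_k \mapsto \sigma_k\sigma_i \mapsto \sigma_k\sigma_i\sigma_j$, which reassociates to give a genuine (covariant) representation $A_{\sigma_i}A_{\sigma_j} = A_{\sigma_i\sigma_j}$. One should also note that no invertibility of the $\sigma_i$ is used anywhere, so this is a representation of $\Sigma$ as a monoid of (generally non-invertible) linear endomorphisms of $V^n$, which is precisely the \emph{regular representation} of $\Sigma$ on the monoid algebra analogue $\bigoplus_{\sigma \in \Sigma} V$; it is worth remarking that the $A_{\sigma_i}$ need not be invertible and hence this is genuinely a monoid representation, not a group representation.
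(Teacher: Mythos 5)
Your proof is correct and follows exactly the same computation as the paper's: the identity $A_{\sigma_1}=\Id$ from the unit property, and the chain $(A_{\sigma_i}A_{\sigma_j}X)_{\sigma_k}=X_{(\sigma_k\circ\sigma_i)\circ\sigma_j}=X_{\sigma_k\circ(\sigma_i\circ\sigma_j)}=(A_{\sigma_i\circ\sigma_j}X)_{\sigma_k}$ via associativity. The extra remarks on covariance and non-invertibility are accurate but not needed.
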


\begin{proof}
Because $\sigma_1 \circ \sigma_i = \sigma_i \circ \sigma_1 = \sigma_i$ for all $\sigma_i \in \Sigma$, it is clear that $A_{\sigma_{1}} = {\rm Id}$. Furthermore, given $\sigma_i, \sigma_j \in \Sigma$ we have

\begin{equation}
\begin{aligned}
(A_{\sigma_i} A_{\sigma_j} X)_{\sigma_k} &=  (A_{\sigma_i}[ A_{\sigma_j} X])_{\sigma_k} = \\
(A_{\sigma_j} X)_{\sigma_k \circ \sigma_i} &= X_{(\sigma_k \circ \sigma_i) \circ \sigma_j} =\\
X_{\sigma_k \circ (\sigma_i \circ \sigma_j)} &= (A_{\sigma_i \circ \sigma_j}X)_{\sigma_k}  \, ,
\end{aligned}
\end{equation}
for all $X \in V^n$. This proves the statement.
\end{proof}

\begin{thr}\label{symfun}
A vector field $F: V^n \rightarrow V^n$ is of the form $F = \Gamma_f$ for some $f: V^n \rightarrow V$ if and only if we have $F \circ A_{\sigma_i} = A_{\sigma_i} \circ F$ for all $\sigma_i \in \Sigma$.
\end{thr}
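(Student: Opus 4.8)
The plan is to prove both implications by direct componentwise computation, leaning on Theorem \ref{repre} (that the maps $A_{\sigma_i}$ form a monoid representation) and on the fact that $\Sigma$ contains the unit $\sigma_1 = \mathrm{Id}$. It is worth noting first that $\Gamma_f$ is genuinely well defined by the defining formula $(\Gamma_f)_{\sigma_i} = f \circ A_{\sigma_i}$, so that the forward implication has content.

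For the ``only if'' direction, I would fix $\sigma_j \in \Sigma$ and compute the $\sigma_i$-component of both $\Gamma_f \circ A_{\sigma_j}$ and $A_{\sigma_j} \circ \Gamma_f$ at an arbitrary $X \in V^n$. On the one hand, $(\Gamma_f(A_{\sigma_j} X))_{\sigma_i} = f(A_{\sigma_i}(A_{\sigma_j} X)) = f(A_{\sigma_i \circ \sigma_j} X)$, where the last equality is Theorem \ref{repre}. On the other hand, $(A_{\sigma_j}(\Gamma_f(X)))_{\sigma_i} = (\Gamma_f(X))_{\sigma_i \circ \sigma_j} = f(A_{\sigma_i \circ \sigma_j} X)$ by the definitions of $A_{\sigma_j}$ and of $\Gamma_f$. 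Since these agree for all $i$ and all $X$, we conclude $\Gamma_f \circ A_{\sigma_j} = A_{\sigma_j} \circ \Gamma_f$.

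For the ``if'' direction, assume $F \circ A_{\sigma_i} = A_{\sigma_i} \circ F$ for all $i$, and recover the candidate response function as the identity-indexed component: set $f := F_{\sigma_1} : V^n \to V$, i.e. $f(X) := (F(X))_{\sigma_1}$. For any $\sigma_i$ and any $X$, evaluating the equivariance relation at $X$ and taking the $\sigma_1$-component gives $(F(A_{\sigma_i} X))_{\sigma_1} = (A_{\sigma_i} F(X))_{\sigma_1} = (F(X))_{\sigma_1 \circ \sigma_i} = (F(X))_{\sigma_i}$, the last step using $\sigma_1 \circ \sigma_i = \sigma_i$. The left-hand side is $f(A_{\sigma_i} X)$ by the definition of $f$, so $(F(X))_{\sigma_i} = f(A_{\sigma_i} X) = (\Gamma_f(X))_{\sigma_i}$ for every $i$ and every $X$, which is exactly $F = \Gamma_f$.

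There is no serious obstacle here; the argument is essentially bookkeeping with the indices of $V^n = \bigoplus_{\sigma_i \in \Sigma} V$. The one conceptual point deserving emphasis — and the reason the reconstruction works for a monoid rather than a bare semigroup — is that the unit element $\sigma_1 = \mathrm{Id}$ allows one to isolate $f$ from $F$ by reading off the identity-indexed component, and that $A_{\sigma_1} = \mathrm{Id}$ (again Theorem \ref{repre}) makes this consistent with $(\Gamma_f)_{\sigma_1} = f \circ A_{\sigma_1} = f$, so that the map $f \mapsto \Gamma_f$ and the map $F \mapsto F_{\sigma_1}$ are mutually inverse on the relevant classes.
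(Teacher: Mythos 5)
Your proof is correct and follows essentially the same route as the paper's: the forward direction is the same componentwise computation using Theorem \ref{repre}, and the converse recovers $f = F_{\sigma_1}$ by reading off the $\sigma_1$-component of the equivariance relation, exactly as in the paper. The closing remark about why the unit of the monoid is what makes the reconstruction work is a nice observation but not a departure from the paper's argument.
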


\begin{proof}
We will first show that $\Gamma_f \circ A_{\sigma_i} = A_{\sigma_i} \circ \Gamma_f$ for all $f: V^n \rightarrow V$ and $\sigma_i \in \Sigma$. We see that on the one hand side we have

\begin{equation}
\begin{aligned}
[(\Gamma_f  \circ A_{\sigma_i})(X)]_{\sigma_k} = [\Gamma_f ( A_{\sigma_i} X)]_{\sigma_k} = (f \circ A_{\sigma_k} \circ A_{\sigma_i})(X) \, .
\end{aligned}
\end{equation}
On the other, we see that

\begin{equation}
\begin{aligned}
[(A_{\sigma_i} \circ \Gamma_f )(X)]_{\sigma_k} &= [\Gamma_f(X)]_{\sigma_k \circ \sigma_i}  =\\
(f \circ A_{\sigma_k \circ \sigma_i})(X) &= (f \circ A_{\sigma_k} \circ A_{ \sigma_i})(X) \, ,
\end{aligned}
\end{equation}
where in the last step we have used the result of theorem \ref{repre}. This proves the first part of the theorem. 

As for the second, suppose that 
$F \circ A_{\sigma_i} = A_{\sigma_i} \circ F$ for all $\sigma_i \in \Sigma$. Using the definition of $A_{\sigma_i}$ and the fact that $\sigma_1 \circ \sigma_i = \sigma_i$ for all $\sigma_i \in \Sigma$, we see that

\begin{equation}
\begin{aligned}
&[F(X)]_{\sigma_i} = [(A_{\sigma_i} \circ F)(X)]_{\sigma_1} =\\
&[(F \circ A_{\sigma_i})(X)]_{\sigma_1} = (F_{\sigma_1} \circ A_{\sigma_i})(X) \, ,
\end{aligned}
\end{equation}
for all $X \in V^n$. Hence we see that $F = \Gamma_f$ for $f = F_{\sigma_1}$. This proves the second part of the theorem.
\end{proof}
\noindent The following theorem provides the relation between the original network $\gamma_f$ and the new network $\Gamma_f$.
\begin{thr}\label{inbed}
For any node $p \in \{1, \dots N\}$, define the map $\pi_p: V^N \rightarrow V^n$  by 
$$\pi_p (x)_{\sigma_j} := x_{\sigma_j(p)}\, .$$ 
Then $\pi_p$ is a semiconjugacy between  $\gamma_f$ and $\Gamma_f$. That is, we have $$\pi_p \circ \gamma_f = \Gamma_f \circ \pi_p\, .$$
\end{thr}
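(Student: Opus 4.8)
The plan is to prove the identity $\pi_p \circ \gamma_f = \Gamma_f \circ \pi_p$ by a direct componentwise computation: fix an arbitrary $x \in V^N$ and an arbitrary index $\sigma_j \in \Sigma$, and show that the $\sigma_j$-components of $\pi_p(\gamma_f(x))$ and of $\Gamma_f(\pi_p(x))$ agree. No clever idea is needed; the statement is essentially forced by the definitions, and the only structural ingredients are that composition of the input maps is associative and that $\Sigma$ is closed under composition, so that an index like $\sigma_k \circ \sigma_j$ is again a legitimate element of $\Sigma$.

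Concretely, I would first expand the left-hand side. Using the definition of $\pi_p$ and then formula (\ref{first}) for $\gamma_f$ evaluated at the node $\sigma_j(p)$,
\[
(\pi_p(\gamma_f(x)))_{\sigma_j} \;=\; (\gamma_f(x))_{\sigma_j(p)} \;=\; f\!\left(x_{\sigma_1(\sigma_j(p))}, \dots, x_{\sigma_n(\sigma_j(p))}\right).
\]
Next I would expand the right-hand side. By the definition of $\Gamma_f$ we have $(\Gamma_f(\pi_p(x)))_{\sigma_j} = f(A_{\sigma_j}(\pi_p(x)))$, where the $k$-th argument slot of $f$ is the $\sigma_k$-component of the vector $A_{\sigma_j}(\pi_p(x))$. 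Applying the definitions of $A_{\sigma_j}$ and of $\pi_p$ in turn gives
\[
(A_{\sigma_j}(\pi_p(x)))_{\sigma_k} \;=\; (\pi_p(x))_{\sigma_k \circ \sigma_j} \;=\; x_{(\sigma_k \circ \sigma_j)(p)} \;=\; x_{\sigma_k(\sigma_j(p))},
\]
so that $(\Gamma_f(\pi_p(x)))_{\sigma_j} = f(x_{\sigma_1(\sigma_j(p))}, \dots, x_{\sigma_n(\sigma_j(p))})$, which coincides with the left-hand side computed above. Since $x$ and $\sigma_j$ were arbitrary, this proves the claim.

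I do not expect a genuine obstacle here; the proof is pure bookkeeping. The one place that requires a little care is keeping the argument-slot convention of $f$ consistent: the $k$-th slot of $f$ in (\ref{first}) is fed the input map $\sigma_k$, and the same convention must be used when reading $f \circ A_{\sigma_j}$, so that the two tuples above can legitimately be compared slot by slot. It is also worth recording explicitly that $(\sigma_k \circ \sigma_j)(p) = \sigma_k(\sigma_j(p))$ is simply the definition of composition, and that closure of $\Sigma$ under composition is precisely what makes $(\pi_p(x))_{\sigma_k \circ \sigma_j}$ meaningful in the first place.
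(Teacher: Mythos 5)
Your proof is correct and follows essentially the same route as the paper: a direct componentwise unwinding of the definitions of $\pi_p$, $A_{\sigma_j}$, $\gamma_f$ and $\Gamma_f$, with associativity of composition doing the work. The only cosmetic difference is that the paper isolates your computation of $(A_{\sigma_j}\pi_p(x))_{\sigma_k}$ as a separate lemma ($A_{\sigma_i}\circ\pi_p=\pi_{\sigma_i(p)}$) and uses the shorthand $(\gamma_f)_p=f\circ\pi_p$, whereas you inline both.
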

\noindent To prove theorem \ref{inbed} we  need the following lemma.

\begin{lem}\label{lemb}
For any node $p \in \{1, \dots N\}$ and  input function $\sigma_i$, we have

\[A_{\sigma_i} \circ \pi_p = \pi_{\sigma_i(p)} \, .\]
\end{lem}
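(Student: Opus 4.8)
The plan is to verify the claimed identity $A_{\sigma_i} \circ \pi_p = \pi_{\sigma_i(p)}$ by evaluating both sides on an arbitrary $x \in V^N$ and comparing the component indexed by an arbitrary $\sigma_j \in \Sigma$. Since both $A_{\sigma_i} \circ \pi_p$ and $\pi_{\sigma_i(p)}$ are maps $V^N \to V^n$, and $V^n = \bigoplus_{\sigma_j \in \Sigma} V$, it suffices to check that the $\sigma_j$-components agree for every $\sigma_j$.

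First I would unwind the left-hand side using the definitions given in the excerpt. By the definition of $A_{\sigma_i}$, we have $(A_{\sigma_i} Y)_{\sigma_j} = Y_{\sigma_j \circ \sigma_i}$ for any $Y \in V^n$; applying this with $Y = \pi_p(x)$ gives $(A_{\sigma_i} \pi_p(x))_{\sigma_j} = (\pi_p(x))_{\sigma_j \circ \sigma_i}$. Then by the definition of $\pi_p$, namely $(\pi_p(x))_{\sigma_k} = x_{\sigma_k(p)}$, we get $(\pi_p(x))_{\sigma_j \circ \sigma_i} = x_{(\sigma_j \circ \sigma_i)(p)}$. On the right-hand side, the definition of $\pi_{\sigma_i(p)}$ directly yields $(\pi_{\sigma_i(p)}(x))_{\sigma_j} = x_{\sigma_j(\sigma_i(p))}$. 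So the two sides agree precisely because $(\sigma_j \circ \sigma_i)(p) = \sigma_j(\sigma_i(p))$, which is just the definition of composition of the input maps $\sigma_i, \sigma_j : \{1,\dots,N\} \to \{1,\dots,N\}$.

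There is essentially no obstacle here: the lemma is a pure bookkeeping statement, and the only subtlety worth flagging is the order of composition in the two conventions — the index shift in $A_{\sigma_i}$ is by right multiplication ($\sigma_j \mapsto \sigma_j \circ \sigma_i$), whereas evaluation of an input map at a node is an ordinary function application $\sigma_j(\sigma_i(p))$, and one must check these match up. They do, since $(\sigma_j \circ \sigma_i)(p) = \sigma_j(\sigma_i(p))$ by the standard meaning of $\circ$. I would present the computation as a short chain of equalities in a single display, being careful that $\Sigma$ is closed under composition (so that $\sigma_j \circ \sigma_i$ is again one of the input maps indexing a component of $V^n$), which is guaranteed by the standing assumption that $\Sigma$ has been completed to a monoid.

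In short, the proof is: fix $x \in V^N$ and $\sigma_j \in \Sigma$, and compute
\[
(A_{\sigma_i}(\pi_p(x)))_{\sigma_j} = (\pi_p(x))_{\sigma_j \circ \sigma_i} = x_{(\sigma_j \circ \sigma_i)(p)} = x_{\sigma_j(\sigma_i(p))} = (\pi_{\sigma_i(p)}(x))_{\sigma_j},
\]
and since $x$ and $\sigma_j$ were arbitrary, conclude $A_{\sigma_i} \circ \pi_p = \pi_{\sigma_i(p)}$. This lemma will then feed into the proof of Theorem \ref{inbed}, presumably by combining it with the defining formula $(\Gamma_f)_{\sigma_i} = f \circ A_{\sigma_i}$ and the componentwise description of $\gamma_f$.
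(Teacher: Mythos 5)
Your proof is correct and is exactly the computation the paper gives: evaluate the $\sigma_j$-component of both sides, apply the definitions of $A_{\sigma_i}$ and $\pi_p$, and use $(\sigma_j \circ \sigma_i)(p) = \sigma_j(\sigma_i(p))$. No differences worth noting.
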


\begin{proof}
For any $x \in V^N$ we have

\begin{equation}
\begin{aligned}
[(A_{\sigma_i} \circ \pi_p)(x)]_{\sigma_j} &= [A_{\sigma_i} [ \pi_p(x)]]_{\sigma_j} = \\
[\pi_p(x)]_{\sigma_j \circ \sigma_i} &= x_{(\sigma_j \circ \sigma_i)(p)} =\\
x_{\sigma_j (\sigma_i(p))} &= [\pi_{\sigma_i(p)}(x)]_{\sigma_j} \, .
\end{aligned}
\end{equation}
This proves the statement.
\end{proof}

\begin{proof}[Proof of theorem \ref{inbed}]
Recall that 
$(\gamma_f(x))_p = f(x_{\sigma_1(p)}, \dots x_{\sigma_n(p)})$ by definition, from which it follows that we may in fact write $(\gamma_f)_p = f \circ \pi_p$. 

On the one hand side we therefore have

\begin{equation}
\begin{aligned}
[(\pi_p \circ \gamma_f)(x)]_{\sigma_i} = (\gamma_f)_{\sigma_i(p)}(x) = (f \circ \pi_{\sigma_i(p)})(x) \, .
\end{aligned}
\end{equation}
On the other hand, we see by lemma \ref{lemb} that

\begin{equation}
\begin{aligned}
[(\Gamma_f \circ \pi_p)(x)]_{\sigma_i} = (f \circ A_{\sigma_i})(\pi_p(x)) =\\
(f \circ A_{\sigma_i} \circ \pi_p)(x) = (f \circ  \pi_{\sigma_i(p)})(x) \, .
\end{aligned}
\end{equation}
This proves the theorem.
\end{proof}

\begin{remk}
Note that the map $\pi_p$ is injective if and only if 
$$\{\sigma_i(p): \sigma_i \in \Sigma\} = \{1, \dots N\}\, .$$ 
This is to be interpreted as the cell $p$ being influenced by every other cell in the network. In particular, it is  natural to assume that at least one such cell exists in the (original) network. In that case, the dynamics of $\gamma_f$ is embedded in that of $\Gamma_f$ as the restriction of $\Gamma_f$ to the space 
$$\{X_{\sigma_i} = X_{\sigma_j} \text{ if } \sigma_i(p) = \sigma_j(p)\}$$ 
for any such node $p$ for which $\pi_p$ is injective. Note that this space is a polydiagonal space. Furthermore, since it is invariant  for every $f$, we  conclude that this space is in fact a robust synchrony space of the fundamental network.
 \hfill$\triangle$
\end{remk}

\section{Center manifold reduction for  networks}\label{sec3}

In this section we shall describe the main result of this paper. We start with a well-known theorem on the existence of a local  invariant manifold near every steady state of an ODE. The most important feature of this so-called {\it center manifold} is that it contains all bounded (small) solutions, such as steady state points and (small) periodic orbits. We then generalise this result to the setting of fundamental networks, in a way that allows us to retain their symmetries. Because we know from theorem \ref{symfun} that these symmetries completely describe the fundamental network vector field, this will in turn allow us to give a full description of the vector fields that one obtains after restricting to the center manifold. 

Let us first consider differential equations of the general form

\begin{equation}\label{eq:4.1}
\dot{x} = F(x) \, ,
\end{equation}
where \(F:\R^n  \rightarrow \R^n\) is of class \( C^k\) for some \(k \geq 1 \) and satisfies \( F(0) = 0\). Without loss of generality, we may write

\begin{equation}\label{eq:4.2}
\dot{x} = Ax + G(x) \, .
\end{equation}
Here, \(A = DF(0) \), from which it follows that \(G:\R^n  \rightarrow \R^n\) is again of class \( C^k\) and satisfies  \(G(0) = 0\) and \(DG(0) = 0 \). Let us furthermore denote by \(X_c \) the center subspace of $A$. That is, \(X_c \) is the span of the generalized eigenvectors corresponding to the purely imaginary eigenvalues of \(A\). Likewise, we denote by \(X_h \) the hyperbolic subspace of \(A\), which corresponds to the remaining eigenvalues. These two spaces complement each other in $\R^n$, i.e. we have

\begin{equation}\label{eq:4.3}
 \R^n = X_c \oplus X_h \, .
\end{equation}
 Finally, let \(\pi_c\) and  \(\pi_h\) be the projections onto \(X_c \) respectively \(X_h\) corresponding to this decomposition. The following theorem is well-known.

\begin{thr}[{\bf Center Manifold Reduction}] \label{T1}
Given \(A \in \mathcal{L}(\R^n)\) and $k \in \N$, there exists an \(\epsilon = \epsilon(A,k) > 0 \) such that the following holds: If \(G:\R^n  \rightarrow \R^n\) is of class \( C^k\) with $G(0) = 0$ and $DG(0) = 0$ and furthermore satisfies

\begin{itemize}
\item $\displaystyle \sup_{x \in \R^n}||D^jG(x)|| < \infty$ for $0 \leq j \leq k$,
\item $\displaystyle \sup_{x \in \R^n}||DG(x)|| < \epsilon$,
\end{itemize}
then there exists a function $\psi: X_c \rightarrow X_h$ of class \( C^k\) such that its graph in $\R^n$ is an invariant manifold for the system \eqref{eq:4.1}. More precisely, we have
\begin{equation}\label{eq:4.4}
M_c:= 
\{ x_c + \psi(x_c) : x_c \in X_c \} = \{x \in \R^n : \sup_{t \in \R} ||\pi_h\phi^t(x)|| <  \infty \} \, .
\end{equation}
Here $\phi^t$ denotes the flow of equation \eqref{eq:4.1}.  The function $\psi$ satisfies $\psi(0) = 0$ and $D\psi(0) = 0$.

$M_c$ is called the (global) center manifold of \eqref{eq:4.1}. In particular, it contains all bounded solutions to  \eqref{eq:4.1}, such as steady state points and periodic solutions. \end{thr}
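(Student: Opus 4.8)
The plan is to prove this by the \emph{Lyapunov--Perron} method: one realises the points of the center manifold as the initial conditions of the unique orbits of \eqref{eq:4.1} that grow at most subexponentially in both time directions, and produces those orbits as fixed points of an integral operator. First I would refine the splitting \eqref{eq:4.3} to $\R^n = X_c \oplus X_s \oplus X_u$, where $X_s, X_u$ are the stable and unstable subspaces of $A$, with associated projections $\pi_c, \pi_s, \pi_u$ (so $\pi_h = \pi_s + \pi_u$). Fix a rate $0 < \eta < \beta$, where $\beta > 0$ is strictly smaller than the distance from the imaginary axis to the rest of the spectrum of $A$; elementary estimates on $e^{tA}$ then furnish a constant $C \ge 1$ with $\|e^{tA}\pi_c\| \le C e^{\eta|t|}$ for all $t$, $\|e^{tA}\pi_s\| \le C e^{-\beta t}$ for $t \ge 0$, and $\|e^{tA}\pi_u\| \le C e^{\beta t}$ for $t \le 0$. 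Let $\mathcal{B}_\eta$ denote the Banach space of continuous maps $u:\R \to \R^n$ with $\|u\|_\eta := \sup_{t \in \R} e^{-\eta|t|}\|u(t)\| < \infty$.

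For each $x_c \in X_c$, define $\mathcal{T}_{x_c}:\mathcal{B}_\eta \to \mathcal{B}_\eta$ by
\begin{multline*}
(\mathcal{T}_{x_c}u)(t) = e^{tA}x_c + \int_0^t e^{(t-s)A}\pi_c\, G(u(s))\,ds \\ {}+ \int_{-\infty}^{t} e^{(t-s)A}\pi_s\, G(u(s))\,ds - \int_{t}^{+\infty} e^{(t-s)A}\pi_u\, G(u(s))\,ds \, .
\end{multline*}
Using $G(0)=0$ together with $\sup\|DG\| < \epsilon$ one gets $\|G(x)\| \le \epsilon\|x\|$, and combined with the exponential estimates and $\eta < \beta$ this shows the three integrals converge, that $\mathcal{T}_{x_c}$ maps $\mathcal{B}_\eta$ into itself, and that $\|\mathcal{T}_{x_c}u - \mathcal{T}_{x_c}v\|_\eta \le \kappa(\epsilon,\eta)\,\|u-v\|_\eta$ with $\kappa(\epsilon,\eta)\to 0$ as $\epsilon \to 0^+$. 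Choosing $\epsilon = \epsilon(A,k)$ small enough makes $\mathcal{T}_{x_c}$ a contraction uniformly in $x_c$, so by the uniform contraction principle it has a unique fixed point $u^\ast(\cdot\,; x_c)$ depending continuously on $x_c$. I would then define $\psi(x_c) := \pi_h\, u^\ast(0; x_c)$ and $M_c := \{x_c + \psi(x_c) : x_c \in X_c\}$.

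It remains to verify the asserted properties. Differentiating the fixed-point identity shows that $u^\ast(\cdot\,;x_c)$ solves \eqref{eq:4.1} with $u^\ast(0;x_c) = x_c + \psi(x_c)$, so $M_c$ is contained in the set on the right of \eqref{eq:4.4}. Conversely, if $x$ is such that $t \mapsto \pi_h\phi^t(x)$ is bounded, a Gronwall estimate first shows that $t \mapsto \phi^t(x)$ lies in $\mathcal{B}_\eta$; inserting this orbit into the variation-of-constants formula and invoking uniqueness of the fixed point of $\mathcal{T}_{\pi_c x}$ shows $x \in M_c$, which gives the equality \eqref{eq:4.4}. Flow-invariance of $M_c$ follows because the time-translate $u^\ast(\cdot + \tau; x_c)$ again lies in $\mathcal{B}_\eta$ and hence, by uniqueness, equals $u^\ast(\cdot\,; \pi_c\phi^\tau(x_c + \psi(x_c)))$. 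Finally, since $G(0) = 0$ the constant orbit $0$ is the fixed point for $x_c = 0$, so $\psi(0) = 0$; differentiating the fixed-point equation in $x_c$ at $x_c = 0$ and using $DG(0) = 0$ gives $D\psi(0) = 0$.

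The main obstacle is the $C^k$ regularity of $\psi$, and it is exactly here that the hypotheses $\sup\|D^jG\| < \infty$ for $0 \le j \le k$ and the dependence $\epsilon = \epsilon(A,k)$ are used. Formally differentiating the fixed-point equation $j$ times yields, for $1 \le j \le k$, a fixed-point equation for $D^j_{x_c}u^\ast$ whose contraction estimate now involves the weight $e^{j\eta|t|}$, so one needs $j\eta < \beta$; this forces $\eta$, and hence (through $\kappa$) $\epsilon$, to be chosen smaller as $k$ grows. Making this rigorous requires showing that these formal derivatives are the genuine ones and that the fixed point is $C^k$ in the parameter $x_c$. I would establish this by the fiber-contraction argument of Hirsch--Pugh--Shub -- an induction on $j$ using that a uniform contraction with a $C^{j-1}$ parameter-dependent fixed-point map again has a $C^{j-1}$ fixed-point map, with a separate continuity argument closing the top-order derivative -- or else simply invoke Vanderbauwhede's parametrised center manifold theorem \cite{vdbauw}, of which the present statement is a special case.
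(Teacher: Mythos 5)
The paper does not actually prove this theorem: it is quoted as a known result and the proof is deferred to the cited reference \cite{vdbauw}, which establishes it by exactly the Lyapunov--Perron plus fibre-contraction scheme you outline (including the $k$-dependence of $\epsilon$ through the requirement $k\eta<\beta$), so your proposal is a correct reconstruction of the intended proof. One small point to tidy: using $\|e^{tA}\pi_c\|\le Ce^{\eta|t|}$ with the \emph{same} $\eta$ as the norm weight makes the centre integral behave like $\int_0^t e^{\eta|t-s|}e^{\eta|s|}\,ds\sim |t|e^{\eta|t|}$, which does not close the contraction estimate in $\mathcal{B}_\eta$; since the spectrum of $A|_{X_c}$ is purely imaginary one should instead bound $\|e^{tA}\pi_c\|\le C(\epsilon')e^{\epsilon'|t|}$ for some $0<\epsilon'<\eta$, after which your argument goes through verbatim.
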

\noindent A comprehensive proof of this theorem can be found in \cite{vdbauw}. This reference also describes a way around the seemingly strict conditions on the size of the nonlinearity $G$ and its derivatives: if $G$ does not satisfy these conditions, then one simply  multiplies it by a real-valued bump function with small enough support. Since in bifurcation theory one is generally only interested in orbits close to the bifurcation point, this is often a viable solution. 

Moreover, if the vector fields $F$ and $G$ are equivariant under the action of some compact group $\mathcal{G}$, then this bump function can be chosen invariant under this action. As a result, the center manifold is $\mathcal{G}$-invariant. To make this more precise, let us assume that the action of $\mathcal{G}$ is by linear maps $\{A_{g}: g \in \mathcal{G}\}$. This will for example be the case after applying Bochner's linearisation theorem. Let us furthermore denote by $B_r$ an open ball of radius $r > 0$ in $\R^n$ centered around the origin and by $\chi$ a smooth ``bump function'' from $ \R^n$ to $\R$ that takes the value $1$ inside $B_1$ and $0$ outside $B_{2}$. It can then be shown that the vector field

\begin{equation}\label{eq:4.5}
\widetilde{G}_{\rho}(x) := \chi(\rho^{-1}x)G(x) 
\end{equation}
satisfies the necessary bounds of theorem \ref{T1} for small enough $\rho > 0$. However, this function will in general not be $\mathcal{G}$-equivariant anymore, as the bump function $\chi$ may not be $\mathcal{G}$-invariant. Instead, we may define a new bump function

\begin{equation}\label{eq:4.6}
\overline{\chi}(x) := \int_{\mathcal{G}}  \chi(A_gx) d\mu \, ,
\end{equation}
where $d\mu$ denotes the normalised Haar measure on $\mathcal{G}$ (or simply the normalised counting measure, if $\mathcal{G}$ is finite). The function $\overline{\chi}$ is now $\mathcal{G}$-invariant by construction. From this  it follows that 
  
\begin{equation}\label{eq:4.7}
{G}_{\rho}(x) := \overline{\chi}(\rho^{-1}x)G(x) 
\end{equation}
is $\mathcal{G}$-equivariant, because  

\begin{equation}\label{eq:4.8}
\begin{aligned}
{G}_{\rho}(A_gx)= & \ \overline{\chi}(\rho^{-1}A_gx)G(A_gx) = \overline{\chi}(A_g\rho^{-1}x)A_gG(x) =\\
 &\overline{\chi}(\rho^{-1}x)A_gG(x) = A_g\overline{\chi}(\rho^{-1}x)G(x) =  A_g{G}_{\rho}(x) \, .
\end{aligned}
\end{equation}
By the compactness of $\mathcal{G}$ we may also assume without loss of generality that $\mathcal{G}$ acts by isometries. That is, we have $||A_gx|| = ||x||$ for all $g \in \mathcal{G}$ and $x \in \R^n$.  In particular, we see that $\overline{\chi}$ again takes the value $1$ inside $B_1$ and vanishes outside $B_{2}$. Hence, as is the case for $\widetilde{G}_{\rho}$, we may conclude that ${G}_{\rho}$ satisfies the necessary bounds of theorem \ref{T1} for small enough $\rho > 0$. It then follows from the equivariance of $F$ and ${G}_{\rho}$ that center manifold reduction can in fact be done in an equivariant manner, meaning that the function $\psi: X_c \rightarrow X_h$ is equivariant and that $M_c$ is invariant under the symmetries.

Unfortunately we cannot apply the same procedure in the setting of  networks and fundamental networks, as it relies heavily on the symmetries $A_g$ being invertible (for example in the existence of an invariant measure). As an example, we note that any function $\chi: \R^3 \rightarrow \R$ that is invariant under the symmetry $(X_1,X_2,X_3 ) \mapsto (X_2, X_3,X_3)$ of example ${\bf A}$ would necessarily be constant along the line $\{X_2 = X_3 = 0\}$. It is clear that this would exclude any non-trivial bump function centered around the origin. 
Instead, we will show that one can replace the function $f$ in $\Gamma_f$ in a way to make $\Gamma_f$ satisfy the necessary bounds. Note that in this way the symmetries of $\Gamma_f$ are not broken.

To formalise this procedure, let us first describe our setting a bit more accurately. We want to study bifurcation problems, so we will assume from now on that the response function $f$ depends on parameters, i.e. we assume that
$$f: V^n \times \Omega \to V \ \mbox{with}\ \Omega \subset \R^l$$
is a smooth function of the network states and of parameters \(\la \in \Omega\). For the purpose of center manifold reduction, it is  useful to view these parameters as variables of the ODEs, i.e. to consider the augmented network equations

\begin{equation}\label{eq:4.12}
 \left( \begin{array}{c} 
\dot{x} \\
\dot{\lambda} 
\end{array}
\right) =
\left( \begin{array}{c} 
\Gamma_f(x, \la)\\
0 
\end{array}
\right)
\, ,
\end{equation}
with $\Gamma_f$ defined as before by $$\Gamma_f(x, \lambda)_{\sigma_i}:= f(A_{\sigma_i}x, \lambda)\, .$$
We will set $\underline{x} := (x, \la) \in V^n \times \Omega $ and $\underline{\Gamma}_f := (\Gamma_f,0) : V^n \times \Omega \rightarrow V^n \times \Omega$, and will henceforth abbreviate  equation (\ref{eq:4.12}) as

\begin{equation}\label{eq:4.13}
\dot{\underline{x}} = \underline{\Gamma}_f(\underline{x}) \, .
\end{equation}
It is clear that this system is now equivariant under symmetries of the form

\[\ul{A}_{\sigma_i} : (x,\la) \mapsto (A_{\sigma_i}x, \la) \text{ for } i \in \{1 \dots n\} \, . \]
Furthermore, note that in this notation we also have

\begin{equation}\label{eq:4.14} 
\begin{aligned}
(\underline{\Gamma}_f )_i &= f \circ \ul{A}_{\sigma_i} &&\text{ for } i \in \{1 \dots n\} \, , \\
(\underline{\Gamma}_f )_i &= 0 &&\text{ for } i = n+1\, ,
\end{aligned}
\end{equation}
where we denote by $\ul{x}_{n+1}$ the $\la$-part of the vector  $\ul{x} = (x, \la) \in V^n \times \Omega$. Following the setting of theorem \ref{T1}, we can write $\underline{\Gamma}_f(\underline{x})$ as

\begin{equation}\label{eq:4.15}
\underline{\Gamma}_f(\underline{x}) = D\underline{\Gamma}_f(0)\ul{x} + G(\ul{x}) \, ,
\end{equation}
where $G: V^n \times \Omega \rightarrow V^n \times \Omega$ satisfies $G(0) = 0$ and  $DG(0) = 0$. The first thing to note is that $D\underline{\Gamma}_f(0)\ul{x}$ is again of the form $\ul{\Gamma}_h(\ul{x})$, namely for 
\begin{equation}\label{eq:4.16}
h(\ul{x}) = Df(0)\ul{x} = \sum_{k=1}^{n+1} D_{k}f(0){\ul{x}}_k \, .
\end{equation} 
Indeed, for $i \in \{1, \dots n\}$ we have

\begin{equation}\label{eq:4.17}
\begin{aligned}
(D\ul{\Gamma}_f(0)\ul{x})_i &=&&\sum_{j=1}^{n+1}{D\ul{\Gamma}_f(0)}_{i,j}\ul{x}_{j} = \\
& &&\sum_{j=1}^{n+1} D_j(f \circ \ul{A}_{\sigma_i})(0)\ul{x}_j =\\
& &&\sum_{j=1}^{n+1} \sum_{k=1}^{n+1} D_kf(0)  (\ul{A}_{\sigma_i})_{k,j}\ul{x}_j= \\
& &&\sum_{k=1}^{n+1} D_kf(0) (\ul{A}_{\sigma_i} \ul{x})_{k} = (\ul{\Gamma}_h(\ul{x}))_i \, , 
\end{aligned}
\end{equation}
whereas 

\begin{equation}\label{eq:4.18}
(D\ul{\Gamma}_f(0)\ul{x})_{n+1} = \sum_{j=1}^{n+1}{D\ul{\Gamma}_f(0)}_{n+1,j}\ul{x}_{j} = 0 \, .
\end{equation}
It follows that we may write $G(\ul{x}) = \ul{\Gamma}_f(\ul{x}) - \ul{\Gamma}_h(\ul{x}) = \ul{\Gamma}_g(\ul{x})$, where $g(\ul{x})$ equals $f(\ul{x}) - h(\ul{x}) = f(\ul{x}) - Df(0)\ul{x}$. In particular, assuming that  $f(0) := f(0,0) = 0$, we see that $g(0)=0$ and $Dg(0) = 0$. Summarising, we have the following equivalent of \eqref{eq:4.2}:
\begin{equation}\label{eq:4.19}
\underline{\Gamma}_f(\underline{x}) = D\underline{\Gamma}_f(0)\ul{x} + \ul{\Gamma}_g(\ul{x}) \text{ with } g(0) = 0 \text{ and } Dg(0) = 0 \, .
\end{equation}
We can now proceed to adapt $\ul{\Gamma}_g(\ul{x})$ so as to make it satisfy the conditions of theorem \ref{T1}. To this end, we define $B_r$ to be an open ball in $V^n \times \Omega$ with radius $r$ centered around the origin. Furthermore, let $\chi(\ul{x})$  be a smooth function from $V^n \times \Omega$ to $\R$ that takes the value $1$ inside $B_1$ and $0$ outside $B_2$. Analogous to the procedure for general vector fields, we now set $g_{\rho}(\ul{x}) := \chi(\rho^{-1}\ul{x})g(\ul{x})$ for $\rho \in \R_{>0}$, which equals $g$ inside $B_{\rho}$ and which vanishes outside  $B_{2\rho}$. The following two theorems assure us that the system given by

\begin{equation}\label{eq:4.20}
\dot{\underline{x}} = D\underline{\Gamma}_f(0)\ul{x} + \ul{\Gamma}_{g_{\rho}}(\ul{x}) 
\end{equation}
satisfies the necessary conditions of theorem \ref{T1} for small enough $\rho$, yet agrees with our initial system \ref{eq:4.13} in a small enough neighborhood around the origin.

\begin{thr}\label{T2}
For any function $g:V^n \times \Omega\to V$ and any $\rho > 0$, there exists an open neighborhood in $V^n \times \Omega$ centered around the origin on which $\ul{\Gamma}_{g}(\ul{x})$ and $\ul{\Gamma}_{g_{\rho}}(\ul{x})$ agree.
\end{thr}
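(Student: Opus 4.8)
The plan is to show that the construction $g \mapsto g_\rho$ is local in a precise sense: evaluating $\ul{\Gamma}_{g_\rho}$ at a point $\ul{x}$ only involves evaluating $g_\rho$ at the points $\ul{A}_{\sigma_i}\ul{x}$, and near the origin each of these stays inside the ball $B_\rho$ where $g_\rho$ and $g$ coincide. So the first step is to recall from the definition of the fundamental network vector field that $(\ul{\Gamma}_{g})(\ul{x})_i = g(\ul{A}_{\sigma_i}\ul{x})$ for $i \in \{1,\dots,n\}$ and $(\ul{\Gamma}_g)(\ul{x})_{n+1} = 0$, and likewise with $g_\rho$ in place of $g$; hence it suffices to find a neighbourhood $U$ of the origin such that $\ul{A}_{\sigma_i}\ul{x} \in B_\rho$ for every $\ul{x} \in U$ and every $\sigma_i \in \Sigma$, since on $B_\rho$ we have $g_\rho = \chi(\rho^{-1}\cdot)g = g$ by the defining property of $\chi$ (it is identically $1$ on $B_1$, so $\chi(\rho^{-1}\cdot) \equiv 1$ on $B_\rho$).

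The second step is to produce such a $U$. Each $\ul{A}_{\sigma_i}$ is a fixed linear map on the finite-dimensional space $V^n \times \Omega$ (it permutes the $V$-coordinates according to $\sigma_j \mapsto \sigma_j \circ \sigma_i$ and fixes the $\lambda$-coordinate), so it is bounded; set $C := \max_{\sigma_i \in \Sigma} \|\ul{A}_{\sigma_i}\|$, which is finite because $\Sigma$ is a finite monoid (here $n = \#\Sigma$). Then for $\ul{x}$ in the open ball $B_{\rho/C}$ (taking $C \geq 1$ without loss of generality, or just $B_{\rho/\max(C,1)}$) we get $\|\ul{A}_{\sigma_i}\ul{x}\| \leq C\|\ul{x}\| < \rho$, so $\ul{A}_{\sigma_i}\ul{x} \in B_\rho$ for all $i$. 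Taking $U = B_{\rho/C}$ finishes the argument: for $\ul{x} \in U$ and $i \leq n$, $(\ul{\Gamma}_{g_\rho}(\ul{x}))_i = g_\rho(\ul{A}_{\sigma_i}\ul{x}) = g(\ul{A}_{\sigma_i}\ul{x}) = (\ul{\Gamma}_g(\ul{x}))_i$, and the $(n+1)$-st components agree trivially since both are zero.

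There is really no serious obstacle here; the statement is essentially a bookkeeping observation, and the only thing to be slightly careful about is that the modification happens \emph{inside} $g$, i.e. one exploits that $\ul{\Gamma}_g$ depends on $g$ only through its values, so replacing $g$ by a function that agrees with it on a neighbourhood of the origin produces a vector field agreeing with $\ul{\Gamma}_g$ on a (possibly smaller, but still open) neighbourhood of the origin — the shrinkage by the factor $C$ is exactly the price paid for the linear maps $\ul{A}_{\sigma_i}$ not being isometries, which is the whole reason this indirect route is needed instead of a bump function on the state space. If one wants the cleanest bound one can even note $\ul{A}_{\sigma_1} = \Id$ forces $C \geq 1$ automatically by Theorem \ref{repre}. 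This is the content that Theorem \ref{T2} needs; the companion statement (that \eqref{eq:4.20} then satisfies the hypotheses of Theorem \ref{T1} for small $\rho$) is the role of the \emph{next} theorem and is not part of what must be proved here.
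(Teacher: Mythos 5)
Your proof is correct and follows essentially the same route as the paper: both reduce the claim to the componentwise identity $(\ul{\Gamma}_{g_\rho})_i = g_\rho\circ\ul{A}_{\sigma_i}$ and the fact that $g_\rho = g$ on $B_\rho$, the only cosmetic difference being that you take the explicit ball $B_{\rho/C}$ with $C=\max_i\|\ul{A}_{\sigma_i}\|$ where the paper takes the intersection $\bigcap_i \ul{A}_{\sigma_i}^{-1}(B_\rho)$ of open preimages.
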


\begin{proof}
Remember that we have $\underline{\Gamma}_{g}(\ul{x})_{n+1} = \underline{\Gamma}_{g_{\rho}}(\ul{x})_{n+1} = 0$ for every $\ul{x} \in V^n \times \Omega$, hence there is nothing to check here. For $i \not= n+1$, the $i$-th component of  $\underline{\Gamma}_{g}(\ul{x})$ equals $g\circ \ul{A}_{\sigma_i}(\ul{x})$, whereas that of  $\underline{\Gamma}_{g_{\rho}}(\ul{x})$ equals $g_{\rho}\circ \ul{A}_{\sigma_i}(\ul{x})$. Because $g$ and $g_{\rho}$ agree on $B_{\rho}$, these components are equal on the set $\ul{A}_{\sigma_i}^{-1} (B_{\rho})$, which by the linearity of $\ul{A}_{\sigma_i}$ is an open set containing $0$. The required neighborhood is then obtained by taking the intersection of these sets for the different values of $i$. 
\end{proof}

\begin{thr}\label{T3}
Let $g: V^n \times \Omega \rightarrow V$ be of class $C^k$ for some $k > 0$. For all $\rho > 0$ and $0 \leq j \leq k$ we have 

\begin{equation}\label{eq:4.21}
\sup_{\ul{x} \in V^n \times \Omega}||D^j\ul{\Gamma}_{g_{\rho}}(\ul{x})|| < \infty \, .
\end{equation}
If $g$ furthermore satisfies $g(0) = 0$ and $Dg(0) = 0$, then 
\begin{equation}\label{eq:4.22}
 \lim_{\rho \downarrow 0} \sup_{\ul{x} \in V^n \times \Omega}\hspace{0.3 cm} || D\ul{\Gamma}_{g_{\rho}} (\ul{x})||  = 0 \, .
\end{equation}
\end{thr}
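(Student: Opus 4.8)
The plan is to reduce everything to statements about the scalar response function $g$ and its rescaled cutoff $g_\rho$, using the fact that $\ul{\Gamma}_{g_\rho}$ is built out of $g_\rho$ by pre-composition with the fixed linear maps $\ul{A}_{\sigma_i}$. First I would record that $\ul{\Gamma}_{g_\rho}$ has components $(\ul{\Gamma}_{g_\rho})_i = g_\rho \circ \ul{A}_{\sigma_i}$ for $i\in\{1,\dots,n\}$ and $(\ul{\Gamma}_{g_\rho})_{n+1}=0$. Hence by the chain rule $D^j(\ul{\Gamma}_{g_\rho})_i$ is a finite sum of terms of the form $(D^j g_\rho)(\ul{A}_{\sigma_i}\ul{x})$ contracted with $j$ copies of the constant linear map $\ul{A}_{\sigma_i}$; since $\|\ul{A}_{\sigma_i}\|$ is a fixed finite constant, we get a bound
\begin{equation}
\|D^j\ul{\Gamma}_{g_\rho}(\ul{x})\| \le C \max_{i}\sup_{\ul{y}}\|D^j g_\rho(\ul{y})\|
\end{equation}
for a combinatorial constant $C=C(n,j)$. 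So it suffices to prove the two claims for $g_\rho$ itself in place of $\ul{\Gamma}_{g_\rho}$.

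For the first claim, I would use $g_\rho(\ul{y}) = \chi(\rho^{-1}\ul{y})g(\ul{y})$ and the Leibniz rule: $D^j g_\rho$ is a sum of products of a derivative $D^a\chi(\rho^{-1}\ul{y})$ (carrying a factor $\rho^{-a}$) with a derivative $D^b g(\ul{y})$, with $a+b=j$. Each $D^a\chi$ is bounded since $\chi$ is smooth and compactly supported (it vanishes outside $B_2$), so $\|D^a\chi(\rho^{-1}\ul{y})\| \le \rho^{-a}\sup\|D^a\chi\| < \infty$. The crucial point is that the whole product is supported in $\{\|\ul{y}\| \le 2\rho\}$, a fixed bounded set, on which the $C^k$ function $g$ has bounded derivatives $D^b g$ (continuous function on a compact set). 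Multiplying the finitely many bounded factors together gives $\sup_{\ul{y}}\|D^j g_\rho(\ul{y})\| < \infty$ for each fixed $\rho > 0$, which establishes \eqref{eq:4.21}.

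For the second claim I would track the $\rho$-dependence of $\sup\|Dg_\rho\|$ as $\rho\downarrow 0$, using the hypotheses $g(0)=0$, $Dg(0)=0$. We have $Dg_\rho(\ul{y}) = \rho^{-1}(D\chi)(\rho^{-1}\ul{y})\, g(\ul{y}) + \chi(\rho^{-1}\ul{y})\, Dg(\ul{y})$, and both terms are supported in $\{\|\ul{y}\|\le 2\rho\}$. On that set, $Dg(0)=0$ together with continuity of $Dg$ gives $\sup_{\|\ul{y}\|\le 2\rho}\|Dg(\ul{y})\| \to 0$ as $\rho\downarrow 0$, handling the second term. For the first term, Taylor's theorem with $g(0)=0$, $Dg(0)=0$ gives $\|g(\ul{y})\| \le \tfrac12 \big(\sup_{\|\ul{z}\|\le 2\rho}\|D^2 g(\ul{z})\|\big)\|\ul{y}\|^2 =: \tfrac12 m(\rho)\|\ul{y}\|^2$, where I should note $k$ may equal $1$, in which case I instead use $\|g(\ul{y})\| \le \omega(2\rho)\|\ul{y}\|$ with $\omega$ the modulus of continuity of $Dg$ at $0$, $\omega(2\rho)\to 0$; either way $\|g(\ul{y})\| = o(\|\ul{y}\|)$ uniformly on $\{\|\ul{y}\|\le 2\rho\}$ in a $\rho$-controlled way. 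Then on the support $\|\ul{y}\|\le 2\rho$ we get $\rho^{-1}\|D\chi(\rho^{-1}\ul{y})\|\,\|g(\ul{y})\| \le \rho^{-1}(\sup\|D\chi\|)\cdot \tfrac12 m(\rho)(2\rho)^2 = 2\rho\, m(\rho)\sup\|D\chi\| \to 0$ (respectively $\le \rho^{-1}(\sup\|D\chi\|)\,\omega(2\rho)\,2\rho = 2\,\omega(2\rho)\sup\|D\chi\| \to 0$ in the $C^1$ case). Combining, $\sup_{\ul{y}}\|Dg_\rho(\ul{y})\| \to 0$, and then the reduction in the first paragraph gives \eqref{eq:4.22}.

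The main obstacle is purely bookkeeping: handling the low-regularity edge case $k=1$ (where there is no second derivative to Taylor-expand against, so one must argue via the modulus of continuity of $Dg$), and carefully tracking the powers of $\rho$ coming from differentiating $\chi(\rho^{-1}\cdot)$ against the smallness $o(\|\ul{y}\|)$ of $g$ on the shrinking support — the point being that one factor of $\rho^{-1}$ from $D\chi$ is beaten by two (or, in the $C^1$ case, "one plus a little") factors of $\rho$ from the bound on $g$. No deep idea is needed beyond the observation, already exploited elsewhere in the paper, that pre-composition with the fixed linear maps $\ul{A}_{\sigma_i}$ cannot spoil any of these uniform bounds.
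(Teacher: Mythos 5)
Your proposal is correct and follows essentially the same route as the paper: reduce to the components $g_\rho\circ\ul{A}_{\sigma_i}$ using that the $\ul{A}_{\sigma_i}$ are fixed linear maps, get boundedness of all derivatives from the compact support of $g_\rho$, and for the limit exploit the support in $B_{2\rho}$ together with $g(0)=0$ and $Dg(0)=0$. The only cosmetic difference is that the paper obtains $\|g(\ul{x})\|\le\|\ul{x}\|\sup_{\|\ul{z}\|\le 2\rho}\|Dg(\ul{z})\|$ directly from the mean value theorem, which works uniformly for all $k\ge 1$ and makes your separate treatment of the $k=1$ case unnecessary.
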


\begin{proof}
We start with the claim on boundedness. It is clear that we only need to show this for the separate components of $\ul{\Gamma}_{g_{\rho}}(\ul{x})$ and their derivatives. However, writing $g_{\rho} = H$ we see that every (non-trivial) component of $\ul{\Gamma}_{g_{\rho}}(\ul{x})$ can be written in  the general form 

\begin{equation}\label{eq:4.23}
\ul{\Gamma}_{g_{\rho}}(\ul{x})_i = (H \circ \ul{A}_{\sigma_i}) (\ul{x})\, ,
\end{equation}
where $H$ is a $C^k$-function with compact support in $V^n \times \Omega$. It is clear that any function that can be written in this way is uniformly bounded. Moreover, taking the derivative gives
\begin{equation}\label{eq:4.24}
D(H \circ \ul{A}_{\sigma_i})(\ul{x})  = DH(\ul{A}_{\sigma_i}\ul{x})\cdot \ul{A}_{\sigma_i} = ((DH\cdot \ul{A}_{\sigma_i})\circ \ul{A}_{\sigma_i })(\ul{x})  \, ,
\end{equation}
which is again of the form (\ref{eq:4.23}), where our new $H$ is now given by the 
$C^{k-1}$-function $DH\cdot \ul{A}_{\sigma_i}$. We conclude by induction that indeed the first $k$ derivatives of $\ul{\Gamma}_{g_{\rho}}$ are uniformly bounded. This proves the first part of the theorem. 

As for the second claim, let $i \in \{1,\dots n\}$, $j\in \{1, \dots n+1\}$ and $\rho > 0$. Then
\begin{equation}
\begin{split}
&\sup_{\ul{x} \in  V^n \times \Omega} || D\ul{\Gamma}_{g_{\rho}} (\ul{x})_{i,j}||  = \\
&\sup_{\ul{x} \in  V^n \times \Omega} || D_j(\ul{\Gamma}_{g_{\rho}})_i(\ul{x})||  = \\
&\sup_{\ul{x} \in  V^n \times \Omega} || D_j (g_{\rho} \circ \ul{A}_{\sigma_i})(\ul{x}) || = \\
&\sup_{\ul{x} \in  V^n \times \Omega} || \sum_{k=1}^{n+1}D_k g_{\rho} (\ul{A}_{\sigma_i}(\ul{x})) (\ul{A}_{\sigma_i})_{k,j}|| \leq \\
\sum_{k=1}^{n+1} \hspace{0.1 cm}  &\sup_{\ul{x} \in  V^n \times \Omega}|| D_k g_{\rho} (\ul{A}_{\sigma_i}(\ul{x})) || \cdot ||(\ul{A}_{\sigma_i})_{k,j}|| \leq \\
\sum_{k=1}^{n+1} \hspace{0.1 cm}  &\sup_{\ul{x} \in  V^n \times \Omega}|| D_k g_{\rho} (\ul{x}) ||\cdot ||(\ul{A}_{\sigma_i})_{k,j}|| \leq \\
\sum_{k=1}^{n+1} \hspace{0.1 cm}  &\sup_{\ul{x} \in  V^n \times \Omega}|| D_k g_{\rho} (\ul{x}) || \, ,
\end{split}
\end{equation}
where in the last step we have used the fact that every component of  $\ul{A}_{\sigma_i}$ is either some identity matrix or a zero-matrix, from which it follows that $||(\ul{A}_{\sigma_i})_{k,j}|| \leq 1$ for all $1 \leq k,j \leq n+1$. From the above we see that it is sufficient to prove that 
\begin{equation}\label{eq:4.26}
\lim_{\rho \downarrow 0} \sup_{\ul{x} \in V^n \times \R}  || D{g_{\rho}} (\ul{x})||  = 0 \, .
\end{equation}
The proof of this fact can directly be copied from \cite{vdbauw}, the only difference being that $g_{\rho}(\ul{x})$ does not map $V^n \times \Omega$ to itself. Nevertheless, we will reproduce it here for the sake of completeness. For all $\rho > 0$ we have

\begin{equation}
\begin{split}\label{eq:4.27}
\sup_{\ul{x} \in V^n \times \Omega} &|| D{g_{\rho}} (\ul{x})|| = 
\sup_{||\ul{x}|| \leq 2\rho} || D{g_{\rho}} (\ul{x})|| = \\
\sup_{||\ul{x}|| \leq 2\rho} &|| \chi(\rho^{-1}\ul{x})Dg(\ul{x}) + \rho^{-1}g(\ul{x})D\chi(\rho^{-1}\ul{x}) || \leq \\
\sup_{||\ul{x}|| \leq 2\rho} &|| \chi(\rho^{-1}\ul{x})|| \sup_{||\ul{x}|| \leq 2\rho} ||Dg(\ul{x}) || +\, \rho^{-1}\!\! \sup_{||\ul{x}|| \leq 2\rho} ||g(\ul{x})|| \sup_{||\ul{x}|| \leq 2\rho} ||D\chi(\rho^{-1}\ul{x}) || \leq \\
\sup_{||\ul{x}|| \leq 2\rho} &C_1 ||Dg(\ul{x}) || + \rho^{-1}C_2 \sup_{||\ul{x}|| \leq 2\rho}  ||g(\ul{x})|| \, ,
\end{split} 
\end{equation}
where we have set 
\begin{equation}\label{eq:4.28}
C_1:=\sup_{\ul{x} \in V^n \times \Omega} ||\chi(\ul{x}) ||\, , 
\end{equation}
and
\begin{equation}\label{eq:4.29}
C_2:=\sup_{\ul{x} \in V^n \times \Omega} ||D\chi(\ul{x}) ||\, .
\end{equation}
By the mean value theorem we have, whenever $||\ul{x}|| \leq 2\rho$,

\begin{equation}\label{eq:4.30}
||g(\ul{x})|| = ||g(\ul{x}) - g(0)|| \leq ||\ul{x}||  \sup_{||\ul{x}|| \leq 2\rho} ||Dg(\ul{x}) || \, .
\end{equation}
Combining inequalities  (\ref{eq:4.27}) and (\ref{eq:4.30}), we get
\begin{equation}\label{eq:4.31}
\begin{split}
&\sup_{\ul{x} \in V^n \times \Omega}|| D{g_{\rho}} (\ul{x})|| \leq \\
&\sup_{||\ul{x}|| \leq 2\rho} C_1 ||Dg(\ul{x}) || + \rho^{-1}C_2 \sup_{||\ul{x}|| \leq 2\rho} ||\ul{x}|| \sup_{||\ul{x}|| \leq 2\rho}||Dg(\ul{x}) || = \\
&\sup_{||\ul{x}|| \leq 2\rho} C_1 ||Dg(\ul{x}) || + \rho^{-1}C_2 \cdot 2 \rho  \sup_{||\ul{x}|| \leq 2\rho}||Dg(\ul{x}) || = \\
&(C_1 + 2C_2) \sup_{||\ul{x}|| \leq 2\rho}  ||Dg(\ul{x}) ||\, .
\end{split}
\end{equation}
Because $g(\ul{x})$ is at least $C^1$ and $Dg(0) = 0$, it follows that
\begin{equation}\label{eq:4.32}
\lim_{\rho \downarrow 0} \sup_{\ul{x} \in V^n \times \Omega} || D{g_{\rho}} (\ul{x})||  = 
(C_1 + 2C_2) \lim_{\rho \downarrow 0}  \sup_{||\ul{x}|| \leq 2\rho} ||Dg(\ul{x}) || = 0 \, .
\end{equation}
This proves the theorem.
\end{proof}

\noindent Theorem  \ref{T3} implies that the system 
\begin{equation}\label{eq:4.33}
\dot{\ul{x}}  = D\underline{\Gamma}_f(0)\ul{x} + \ul{\Gamma}_{g_{\rho}}(\ul{x}) 
\end{equation}
admits a global center manifold for small enough $\rho>0$. Recall that the vector field on the right hand side of (\ref{eq:4.33})  can be written as

\begin{equation}\label{eq:4.34}
D\underline{\Gamma}_f(0)\ul{x} + \ul{\Gamma}_{g_{\rho}}(\ul{x}) = \underline{\Gamma}_h(\ul{x}) + \ul{\Gamma}_{g_{\rho}}(\ul{x}) = \underline{\Gamma}_{h+g_{\rho}}(\ul{x}) \, ,
\end{equation}
where $h(\ul{x}) = Df(0)\ul{x}$. It follows that this vector field is again $\{\ul{A}_{\sigma_i}\}$-equivariant. Moreover, by theorem \ref{T2} it agrees with our initial vector field $\ul{\Gamma}_f$ on an open neighborhood around the origin. In the coming sections we shall investigate the properties of the center manifold of equation  (\ref{eq:4.33}).

\section{Symmetry and the center manifold}\label{sec4}

Recall that the global center manifold of an ODE at a steady state point contains all its bounded solutions, such as the steady state points and periodic orbits near the steady state. Therefore, when studying local bifurcations one is often only interested in the dynamics on this manifold. We will now show that the center manifold dynamics inherits the symmetries of the original fundamental network. Moreover, we show that every possible equivariant  vector field on the center manifold may arise after center manifold reduction. 


We begin  by fixing some notation. Given a smooth function $f: V^n\times \Omega \rightarrow V$, we may view the restriction $\ul{\Gamma}_f(\bullet,0)$ of $\ul{\Gamma}_f$ to $\{\la =0 \}$ as a function from $V^n$ to itself. Let us denote this function by  

\[ \Gamma_{f,0}: V^n \rightarrow V^n \, .\]
We will then write  

\[W_c, W_h \subset V^n\]
 for respectively the center subspace and the hyperbolic subspace  of $D\Gamma_{f,0}(0)$. Recall that we have
 \begin{equation}
 V^n = W_c \oplus W_h \, ,
 \end{equation}
and let us denote by $P_c$ and $P_h$ respectively the projection on $W_c$ and $W_h$ corresponding to this decomposition. 
The spaces $W_{c}$ and $W_h$ are invariant under the action of $\{A_{\sigma_i}\}_{i=1}^n$. More generally, given any differentiable  function $F: \R^n \rightarrow \R^n$ and linear function $B: \R^n \rightarrow \R^n$ such that

\begin{equation}\label{eq:5.1}
F \circ B = B \circ F \, ,
\end{equation}
we also have that

 \begin{equation}\label{eq:5.2}
DF(0) \circ B = B \circ DF(0) \, .
\end{equation}
From this it follows that $B$ maps the center and hyperbolic subspaces of $DF(0)$ into themselves.
The following theorem states that the dynamics of $\ul{\Gamma}_f$, restricted to its center manifold, is conjugate to a $\Sigma$-equivariant system on $W_c$.

\begin{thr}\label{T5}
Let $k\geq 1$ and let $f: V^n\times \Omega \rightarrow V$ be of class $C^k$. Assume that the vector field $\ul{\Gamma}_f(\ul{x})$ satisfies the conditions of theorem \ref{T1}, so that its center manifold $M_c$ exists. 
Then the projection $P:V^n \times \Omega \rightarrow W_c \times \Omega$, given by $P(x,\la) := (P_c(x),\la)$ has the property  that its restriction $P|_{M_c}$ bijectively conjugates $\ul{\Gamma}_f|_{M_c}$ to  an ODE on $W_c \times \Omega$ of the form
\begin{equation}\label{eq:5.3}
\begin{split}
\dot{x} &= R(x, \la) \, , \\
\dot{\la} &= 0 \, .
\end{split}
\end{equation}
Here, $R: W_c \times \Omega  \rightarrow W_c$ is a $C^k$-function satisfying
\begin{itemize}
\item $R(0,0) = 0$.
\item The center subspace of $DR_{0}(0)$ is the full space $W_c$, where we have set \\ $R_{0} = R(\bullet,0): W_c \rightarrow W_c$ as the restriction of $R$ to $\{\la = 0\}$.
\item  $R(A_{\sigma_i}x, \la) = A_{\sigma_i}R(x,\la)$ for all $i \in \{1,\dots n \}$ and $(x,\la) \in W_c \times \Omega$. Here $A_{\sigma_i}$ denotes the restriction of $A_{\sigma_i}$ to $W_c$.
\end{itemize}
\end{thr}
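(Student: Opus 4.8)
The plan is to combine the classical (equivariant) center manifold reduction from Theorem \ref{T1} with the monoid-symmetry structure established in Theorems \ref{repre} and \ref{symfun}, and with the observation that $W_c$ and $W_h$ are $\{A_{\sigma_i}\}$-invariant because $\Gamma_{f,0}$ commutes with each $A_{\sigma_i}$ (equations \eqref{eq:5.1}--\eqref{eq:5.2}). Concretely, by Theorems \ref{T2} and \ref{T3} we may replace $\ul{\Gamma}_f$ by $\ul{\Gamma}_{h+g_\rho}$ for small $\rho$, which agrees with $\ul{\Gamma}_f$ near the origin, satisfies the bounds of Theorem \ref{T1}, and is still $\{\ul{A}_{\sigma_i}\}$-equivariant. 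So Theorem \ref{T1} produces a $C^k$ map $\psi : W_c\times\Omega \to W_h$ with $\psi(0)=0$, $D\psi(0)=0$, whose graph $M_c$ is flow-invariant; the first task is to show $\psi$ can be taken equivariant, i.e. $\psi(A_{\sigma_i}x,\la) = A_{\sigma_i}\psi(x,\la)$. This follows from the \emph{uniqueness} of the global center manifold as a set together with the characterization \eqref{eq:4.4}: since each $\ul A_{\sigma_i}$ commutes with the (modified) vector field, $\ul A_{\sigma_i}(M_c)$ is again an invariant manifold satisfying the boundedness characterization, hence $\ul A_{\sigma_i}(M_c)\subseteq M_c$; writing this out for a point $(x,\psi(x,\la))$ and using that $\ul A_{\sigma_i}$ preserves the splitting $W_c\oplus W_h$ gives the equivariance of $\psi$.

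Next I would \emph{define} the reduced vector field. The map $P(x,\la)=(P_c x,\la)$ restricted to $M_c$ is a bijection onto $W_c\times\Omega$ with inverse $(x,\la)\mapsto (x+\psi(x,\la),\la)$; conjugating $\ul\Gamma_f|_{M_c}$ through it gives an ODE of the form \eqref{eq:5.3} with $R(x,\la) = P_c\,\Gamma_{h+g_\rho}(x+\psi(x,\la),\la)$ (and the $\dot\la=0$ component is automatic since $\ul\Gamma_f$ fixes $\la$). The three asserted properties of $R$ are then checked in turn: $R(0,0)=0$ is immediate from $\psi(0,0)=0$ and $\Gamma_{h}(0,0)=0$; the statement that the center subspace of $DR_0(0)$ is all of $W_c$ follows because $D\psi(0)=0$ forces $DR_0(0) = P_c\,D\Gamma_{f,0}(0)|_{W_c} = D\Gamma_{f,0}(0)|_{W_c}$, and by definition $W_c$ is precisely the generalized eigenspace of $D\Gamma_{f,0}(0)$ for the purely imaginary eigenvalues, so the restriction to $W_c$ has its entire spectrum on the imaginary axis. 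The equivariance $R(A_{\sigma_i}x,\la)=A_{\sigma_i}R(x,\la)$ is obtained by chasing the definition: $P_c$ commutes with $A_{\sigma_i}$ (since $A_{\sigma_i}$ preserves $W_c\oplus W_h$), $\psi$ is equivariant by the previous paragraph, and $\Gamma_{h+g_\rho}$ is $\{A_{\sigma_i}\}$-equivariant, so all three factors in $R = P_c\circ\Gamma_{h+g_\rho}\circ(\Id+\psi)$ intertwine the $A_{\sigma_i}$-action, and one notes $A_{\sigma_i}$ indeed restricts to a self-map of $W_c$.

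The main obstacle I anticipate is the equivariance of $\psi$ (equivalently, the $\{\ul A_{\sigma_i}\}$-invariance of $M_c$), because the usual argument for \emph{groups} would invoke an averaging/Haar-measure construction, which is unavailable for a noninvertible monoid — this is exactly the technical point the introduction flags. The workaround is to avoid averaging entirely and argue from the set-theoretic uniqueness of $M_c$: the characterization $M_c = \{\ul x : \sup_{t\in\R}\|\pi_h\phi^t(\ul x)\| < \infty\}$ in \eqref{eq:4.4} is manifestly preserved by any invertible linear symmetry $\ul A_{\sigma_i}$ that commutes with the flow and preserves the hyperbolic projection $\pi_h$ (which it does, since $\ul A_{\sigma_i}(W_h\times 0)\subseteq W_h\times 0$, and likewise for $W_c$). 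A subtlety is that the $\ul A_{\sigma_i}$ are \emph{not} invertible on $V^n\times\Omega$ in general, so ``$\ul A_{\sigma_i}(M_c) = M_c$'' must be replaced by the inclusion $\ul A_{\sigma_i}(M_c)\subseteq M_c$; I would show this inclusion is all that is needed to deduce $\psi(A_{\sigma_i}x,\la) = A_{\sigma_i}\psi(x,\la)$, by noting that for $\ul x = (x,\psi(x,\la),\la)\in M_c$ the point $\ul A_{\sigma_i}\ul x = (A_{\sigma_i}x, A_{\sigma_i}\psi(x,\la),\la)$ lies in $M_c$, and $M_c$ is a graph over $W_c\times\Omega$, so its $W_h$-component at the base point $(A_{\sigma_i}x,\la)$ must equal both $A_{\sigma_i}\psi(x,\la)$ and $\psi(A_{\sigma_i}x,\la)$. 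The remaining verifications are routine linear algebra and chain-rule bookkeeping.
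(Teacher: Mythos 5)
Your overall strategy is the right one and essentially matches the paper's: you avoid the Haar--averaging obstruction by invoking the boundedness characterization \eqref{eq:4.4} of $M_c$, deducing $\ul{A}_{\sigma_i}(M_c)\subseteq M_c$ from $\phi^t\circ\ul{A}_{\sigma_i}=\ul{A}_{\sigma_i}\circ\phi^t$, and then reading off the equivariance of the graph map from the fact that $\ul{A}_{\sigma_i}$ preserves the center/hyperbolic splitting. This is precisely the paper's Lemma \ref{L1}, and your observation that only the inclusion (not equality) of $\ul{A}_{\sigma_i}(M_c)$ in $M_c$ is needed is exactly the right way to handle noninvertibility.

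There is, however, a genuine gap in your treatment of the parameter directions. Theorem \ref{T1} applied to $\ul{\Gamma}_{h+g_\rho}$ produces a graph over the center subspace $\ul{W}_c$ of $D\ul{\Gamma}_f(0)$, and $\ul{W}_c$ is \emph{not} $W_c\times\Omega$ in general: because of the off-diagonal block $D_\la\Gamma_f(0,0)$ in
\begin{equation*}
D\ul{\Gamma}_f(0) = \begin{pmatrix} D\Gamma_{f,0}(0) & D_\la\Gamma_f(0,0) \\ 0 & 0 \end{pmatrix},
\end{equation*}
the generalized eigenvectors for the eigenvalue $0$ in the $\la$-directions have the form $(w_i,\la_i)$ with $w_i\in W_h$ generically nonzero. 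So $\psi$ is a priori a map $\ul{W}_c\to\ul{W}_h$, and three of your assertions need justification that you have skipped: that $M_c$ is also a graph over $W_c\times\Omega$, that $P|_{M_c}$ is therefore a bijection with inverse $(x,\la)\mapsto(x+\psi(x,\la),\la)$, and that ``$D\psi(0)=0$'' for this reparametrized graph. The last claim is in fact false in general --- the $\la$-derivative of the graph function over $W_c\times\Omega$ at the origin is the linear map $Q:\la_i\mapsto w_i$ into $W_h$, which need not vanish. Your conclusions survive because the second bullet point only involves the restriction to $\{\la=0\}$, where $Q$ plays no role, but the argument as written conflates $\ul{W}_c$ with $W_c\times\Omega$. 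The paper closes this gap with Lemma \ref{L2} (showing $\ul{W}_h=(W_h,0)$ and $\ul{W}_c=(W_c,0)\oplus\spn\{(w_i,\la_i)\}$ with $w_i\in W_h$ and $\{\la_i\}$ a basis of $\Omega$) and by factoring $P|_{M_c}$ through the explicit bijection $P':\ul{W}_c\to W_c\times\Omega$; you need some version of this step.
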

\noindent 
We will call the map $R: W_c \times \Omega  \rightarrow W_c$ of the preceding theorem the \textit{reduced vector field} of the network vector field $\ul{\Gamma}_f$. Note that the statement of theorem \ref{T5} is not that any vector field $R$ that satisfies the conclusions of theorem \ref{T5} can be obtained as the reduced vector field of a network vector field. This issue will be addressed in theorems \ref{linall} and \ref{nonlin} and remarks \ref{rem2} and \ref{rem3}, where it is shown that indeed theorem \ref{T5} exactly describes all possible reduced vector fields.

The result of theorem $\ref{T5}$ hinges mostly on a corollary of theorem \ref{T1}, which states that symmetries of a vector field  are passed on to its center manifold. More precisely, we have the following result.

\begin{lem}\label{L1}
Let $F$ be an  arbitrary vector field on $\R^n$ satisfying the conditions of theorem \ref{T1}. Keeping with the notation, let $\psi : X_c \rightarrow X_h$ be the map whose graph is the center manifold. Given a linear map $B: \R^n \rightarrow \R^n$ such that $F \circ B = B \circ F$, we also have $\psi \circ B = B \circ \psi$.  Furthermore, the center manifold is invariant under $B$, i.e. we have  $Bx\in M_c$ whenever $x \in M_c$.
\end{lem}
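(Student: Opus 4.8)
The plan is to exploit the variational characterization of the center manifold given in Theorem \ref{T1}, namely that $M_c$ is exactly the set of points $x$ whose forward and backward orbits stay bounded in the hyperbolic direction: $M_c = \{x : \sup_{t\in\R}\|\pi_h\phi^t(x)\| < \infty\}$. The key observation is that the linear symmetry $B$ intertwines the flow. First I would argue that because $F\circ B = B\circ F$, the flow $\phi^t$ of $\dot x = F(x)$ satisfies $\phi^t\circ B = B\circ\phi^t$ for all $t$; this is the standard fact that a time-independent symmetry of a vector field is a symmetry of its flow (uniqueness of solutions: $t\mapsto B\phi^t(x)$ solves the ODE with initial condition $Bx$). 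Next, by \eqref{eq:5.1}–\eqref{eq:5.2} applied to $F$ (with $A = DF(0)$), $B$ commutes with $A$, hence $B$ preserves the splitting $\R^n = X_c\oplus X_h$ and therefore commutes with both projections: $B\pi_h = \pi_h B$ and $B\pi_c = \pi_c B$.

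Granting these two facts, invariance of $M_c$ under $B$ is immediate: if $x\in M_c$ then $\sup_t\|\pi_h\phi^t(Bx)\| = \sup_t\|\pi_h B\phi^t(x)\| = \sup_t\|B\pi_h\phi^t(x)\| \leq \|B\|\sup_t\|\pi_h\phi^t(x)\| < \infty$, so $Bx\in M_c$. (One may also want $B^{-1}$ if $B$ is invertible, but invertibility is not needed here — one only needs $B(M_c)\subseteq M_c$, and the bound above uses only that $B$ is a bounded linear operator.) To obtain $\psi\circ B = B\circ\psi$, I would use that every point of $M_c$ is uniquely of the form $x_c + \psi(x_c)$ with $x_c = \pi_c x\in X_c$. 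Take $x_c\in X_c$ and set $x := x_c + \psi(x_c)\in M_c$. Then $Bx\in M_c$, so $Bx = (\pi_c Bx) + \psi(\pi_c Bx)$. On the other hand $\pi_c Bx = B\pi_c x = B x_c$, and writing out $Bx = Bx_c + B\psi(x_c)$ and comparing the $X_h$-components (recalling $B\psi(x_c)\in X_h$ since $B$ preserves $X_h$ and $\psi(x_c)\in X_h$), we get $B\psi(x_c) = \psi(Bx_c)$, which is the desired equivariance of $\psi$ on $X_c$.

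The only genuine subtlety — and the step I would flag as the main point to get right — is the claim $\phi^t\circ B = B\circ\phi^t$ together with the fact that $\phi^t$ is globally defined. Under the standing hypotheses of Theorem \ref{T1} ($G$ and its first $k$ derivatives bounded), $F = A + G$ is globally Lipschitz, so the flow $\phi^t$ is complete and the uniqueness argument is valid for all $t\in\R$; this is exactly the regime in which the center manifold theorem is stated, so no extra work is needed. Everything else is bookkeeping with the projections, which is routine once one knows $B$ commutes with $A$. I would present the argument in the order: (1) $B$ commutes with $A$ hence with $\pi_c,\pi_h$; (2) $B$ commutes with the flow; (3) invariance of $M_c$; (4) equivariance of $\psi$.
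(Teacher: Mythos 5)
Your proposal is correct and follows essentially the same route as the paper's proof: commuting $B$ with $DF(0)$ and hence with the projections, using flow equivariance together with the characterization $M_c = \{x : \sup_t\|\pi_h\phi^t(x)\|<\infty\}$ for invariance, and then comparing $X_h$-components of $Bx_c + B\psi(x_c)$ in the graph representation to get $\psi\circ B = B\circ\psi$. Your extra remark on completeness of the flow under the hypotheses of Theorem \ref{T1} is a sensible point that the paper leaves implicit.
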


\begin{proof}
We will begin by showing the invariance of $M_c$. We remarked earlier that $DF(0)$ commutes with $B$ whenever $F$ does, and that both $X_c$ and $X_h$ are $B$-invariant spaces. From this it follows that the projections $\pi_c$ and $\pi_h$ with respect to the decomposition 

\begin{equation}\label{eq:5.4}
\R^n = X_c \oplus X_h \quad
\end{equation}
commute with $B$ as well.
Recall that the center manifold is given by

\begin{equation}\label{eq:5.5}
M_c = \{x \in \R^n :  \underaccent{t \in \R}{\text{sup }}||\pi_h\phi^t(x)|| <  \infty \} \, ,
\end{equation}
where $\phi^t(x)$ denotes the flow of $F$. Moreover, we have the following equality for the flow, valid for all $x \in \R^n$ and $t \in \R$,

\begin{equation}\label{eq:5.6}
\phi^t(Bx) = B\phi^t(x) \, .
\end{equation}
This follows directly from the symmetry of $F$. Now suppose $x \in M_c$. Then
\begin{equation}\label{eq:5.7}
\begin{split}
&||\pi_h\phi^t(Bx)|| = \\
&||\pi_hB\phi^t(x)|| = \\
&||B\pi_h\phi^t(x)|| \leq \\
&||B||||\pi_h\phi^t(x)||  \, .
\end{split}
\end{equation}
From this it follows that  
\begin{equation}\label{eq:5.8}
\underaccent{t \in \R}{\text{sup }}||\pi_h\phi^t(Bx)|| < \infty \, .
\end{equation}
Hence, $Bx$ is an element of $M_c$ as well.

To show that $\psi \circ B = B \circ \psi$,   note that the center manifold is (also) given by
\begin{equation}\label{eq:5.9}
M_c = \{ x_c + \psi(x_c) : x_c \in X_c \} \, .
\end{equation}
In other words, given an element $x \in M_c$ we may write $x = x_c + \psi(x_c)$ for some $x_c = \pi_c(x) \in X_c$. From this we see that
\begin{equation}\label{eq:5.10}
\pi_h(x) = \psi(x_c) = \psi(\pi_c(x)) \, .
\end{equation}
Now given $x_c \in X_c$, we know that $x_c + \psi(x_c)$ is an element of $M_c$. Hence by our first result, so is $Bx_c + B\psi(x_c)$. Applying equation (\ref{eq:5.10}) to the latter  gives 
\begin{equation}\label{eq:5.11}
\pi_h(Bx_c + B\psi(x_c))  =\psi(\pi_c(Bx_c + B\psi(x_c))) \, .
\end{equation}
Hence, since $\psi(x_c)$ is an element of $X_h$ and $B$ leaves both $X_c$ and $X_h$ invariant, equation (\ref{eq:5.11}) reduces to
\begin{equation}\label{eq:5.12}
 B\psi(x_c)  =\psi(Bx_c) \, .
\end{equation}
This proves the equivariance of $\psi$.
\end{proof}
\noindent Since we want to apply center manifold reduction to $\ul{\Gamma}_f$, let us denote by 
 \[\ul{W}_c, \ul{W}_h \subset V^n \times \Omega\] 
The center and hyperbolic subspaces of $D\ul{\Gamma}_f(0)$. These spaces are invariant under the action of $\{\ul{A}_{\sigma_i} \}_{i=1}^n$, as follows from the equivariance of $D\ul{\Gamma}_f(0)$. We   write $\ul{P}_c$ and $\ul{P}_h$ for the projections corresponding to

 \begin{equation}
 V^n \times \Omega = \ul{W}_c \oplus \ul{W}_h \, .
 \end{equation} 
The following lemma relates the center and hyperbolic subspaces of $D\ul{\Gamma}_f(0)$  to those of $D\Gamma_{f,0}(0)$.

\begin{lem}\label{L2}
The space $\ul{W}_h$ satisfies

\begin{equation}\label{eq:5.13}
\ul{W}_h = ({W}_h,0) \subset V^n \times \Omega \, .
\end{equation}
Furthermore, setting $l := \dim{\Omega}$, there exist vectors $\{w_i \}_{i=1}^{l}$ in $W_h$ and a basis $\{\la_i \}_{i=1}^{l}$ for $\Omega$, such that the vectors $\ul{w}_i := (w_i,\la_i) \in V^n \times \Omega$ satisfy
\begin{equation}\label{eq:5.14}
\ul{W}_c = ({W}_c,0) \oplus \spn\{\ul{w}_i:i = 1, \dots l\} \, .
\end{equation}
\end{lem}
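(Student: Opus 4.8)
The plan is to exploit the block-triangular structure of $D\ul{\Gamma}_f(0)$ forced by the trivial equation $\dot\la = 0$. Since $\ul{\Gamma}_f = (\Gamma_f, 0)$, in the splitting $V^n\times\Omega$ the linearisation at the origin is
\[
D\ul{\Gamma}_f(0) = \begin{pmatrix} A & C \\ 0 & 0 \end{pmatrix}, \qquad A := D\Gamma_{f,0}(0),\quad C := D_\la\Gamma_f(0,0):\Omega\to V^n .
\]
First I would read off the spectrum: $\det\!\big(D\ul{\Gamma}_f(0) - \mu\,\Id\big) = (-\mu)^{l}\det(A-\mu\,\Id)$, so the eigenvalues with nonzero real part are precisely those of $A$ with nonzero real part. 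The crucial observation is that no such hyperbolic direction can have a component in the $\Omega$-factor: if $(v,w)$ lies in the generalised eigenspace of $D\ul{\Gamma}_f(0)$ for an eigenvalue $\mu$ with $\mathrm{Re}\,\mu\neq 0$, then the $\Omega$-component of $(D\ul{\Gamma}_f(0)-\mu\,\Id)^N(v,w)$ equals $(-\mu)^N w$, which vanishes only if $w=0$; what remains is $(A-\mu\,\Id)^N v = 0$, i.e.\ $v$ lies in the corresponding generalised eigenspace of $A$. Summing over all $\mu$ off the imaginary axis gives $\ul{W}_h = (W_h,0)$, which is the first claim; running the same computation for purely imaginary $\mu$, together with the $A$-invariance of $W_c$, shows $(W_c,0)\subset\ul{W}_c$.

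Next comes a dimension count. From $V^n\times\Omega = \ul{W}_c\oplus\ul{W}_h$ and $\ul{W}_h=(W_h,0)$ I get $\dim\ul{W}_c = \dim V^n + l - \dim W_h = \dim W_c + l$. So it suffices to exhibit $l$ linearly independent vectors of the claimed form $(w_i,\la_i)$ in $\ul{W}_c$ which, together with $(W_c,0)$, span everything. For this I would fix \emph{any} basis $\la_1,\dots,\la_l$ of $\Omega$ and set $\ul{w}_i := \ul{P}_c(0,\la_i)\in\ul{W}_c$. Since $\ul{P}_h(0,\la_i)\in\ul{W}_h=(W_h,0)$ it has the form $(-w_i,0)$ with $w_i\in W_h$, whence $\ul{w}_i = (0,\la_i) - \ul{P}_h(0,\la_i) = (w_i,\la_i)$ — exactly as required. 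Any relation $\sum_i c_i\ul{w}_i\in(W_c,0)$ forces $\sum_i c_i\la_i = 0$ on the $\Omega$-component and hence all $c_i = 0$, so $(W_c,0)\cap\spn\{\ul{w}_i\}=\{0\}$; the dimension count then upgrades the inclusion $(W_c,0)\oplus\spn\{\ul{w}_i\}\subseteq\ul{W}_c$ to an equality.

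The only genuinely delicate point is the ``no leakage'' step, namely checking that the generalised eigenspaces of $D\ul{\Gamma}_f(0)$ attached to eigenvalues off the imaginary axis lie inside $V^n\times\{0\}$ and are identified with those of $A$; I expect this to be the main obstacle, although it is really just careful bookkeeping with powers of a block-triangular matrix. Everything else — the spectrum computation, the inclusion $(W_c,0)\subset\ul{W}_c$, and the final dimension argument — is routine linear algebra, and notably none of it uses the precise form of the coupling block $C$, only that the bottom block-row of $D\ul{\Gamma}_f(0)$ vanishes.
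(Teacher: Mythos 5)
Your argument is correct and follows essentially the same route as the paper: both exploit the block-triangular form of $D\ul{\Gamma}_f(0)$, compare spectra (the extra $l$ eigenvalues are all $0$, hence central), and finish with a dimension count. Your two small variations --- establishing $\ul{W}_h\subseteq(W_h,0)$ directly by tracking the $\Omega$-component of powers of the block matrix, and producing the vectors $\ul{w}_i=\ul{P}_c(0,\la_i)$ from an arbitrary basis of $\Omega$ rather than decomposing a chosen complement of $(W_c,0)$ in $\ul{W}_c$ --- are sound and, if anything, slightly more explicit than the paper's version.
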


\begin{proof}
By definition of $\ul{\Gamma}_f$, we see that its linearisation is of the form

\begin{equation}\label{eq:5.15}
D\ul{\Gamma}_f(0) =  \begin{pmatrix}
D\Gamma_{f,0}(0) & v \\
 0 & 0
\end{pmatrix}  \, ,
\end{equation}
corresponding to the natural decomposition of $V^n \times \Omega$. Here,  $v$ is a linear map from $\Omega$ to $V^n$ that is of no further importance to us. Now suppose that $v_{\kappa} \in V^n$ is a generalized eigenvector of  $D\Gamma_{f,0}(0)$ corresponding to the eigenvalue $\kappa \in \R$. It can then be seen from the above matrix  that $(v_\kappa, 0) \in V^n \times \Omega$ is a generalized eigenvector of $D\ul{\Gamma}_f(0)$ corresponding to the same eigenvalue. This likewise holds for complex eigenvalues. In particular, we conclude that
\begin{equation}\label{eq:5.16}
({W}_{c/h},0)  \subset  \ul{W}_{c/h} \, .
\end{equation}
Next, we note that the spectrum of $D\ul{\Gamma}_f(0)$ can be obtained from that of $D{\Gamma}_{f,0}(0)$ by $l$ times adding the eigenvalue $0$. Since $0$ is a purely imaginary number, it follows that we have in fact 
\begin{equation}\label{eq:5.17}
({W}_{h},0)  =  \ul{W}_{h} \, .
\end{equation}
Moreover, we see that there exist vectors $\{\ul{w}'_i\}_{i=1}^l$ in $V^n \times \Omega$ such that

\begin{equation}\label{eq:5.18}
\ul{W}_c = ({W}_c,0) \oplus \spn\{\ul{w}'_i:i = 1, \dots l\} \, .
\end{equation}
In fact, these $\ul{w}'_i$ are generalised eigenvectors of $D\ul{\Gamma}_f(0)$ for the eigenvalue $0$ that are not in $(W_c,0)$. Writing $\ul{w}'_{i}= (w_{i,c} + w_{i,h},\la_i)$ for $w_{i,c/h} \in W_{c/h}$ and $\la_i \in \Omega$, and noting that 
\begin{equation}\label{eq:5.19}
V^n \times \Omega = \ul{W}_c \oplus \ul{W}_h \quad,
\end{equation}
we may conclude that $\{\la_i \}_{i=1}^{l}$ forms a basis for $\Omega$. If we now set $\ul{w}_{i} := (w_{i,h},\la_i) =: (w_i, \la_i)$, we see that indeed 

\begin{equation}\label{eq:5.20}
\begin{aligned}
 &({W}_c,0) \oplus \spn\{\ul{w}_i:i = 1, \dots l\} = \\
 &({W}_c,0) \oplus \spn\{\ul{w}'_i:i = 1, \dots l\} = \ul{W}_c \, .
\end{aligned}
\end{equation}
This proves the lemma.
\end{proof}

\noindent We are now in the position to prove theorem \ref{T5}. In any center manifold, the projection  $\pi_c: M_c \subset \R^n  \rightarrow X_c$ gives rise to a conjugate system on the center subspace $X_c$. However, since the space $\ul{W}_c$ is in general not equal to $W_c \times \Omega$, some more work has to be done.  Recall that we denote by $P_c$ and $P_h$ respectively the projections on $W_c$ and $W_h$, corresponding to the decomposition

\begin{equation}\label{eq:5.21}
 V^n = W_c \oplus W_h \, .
 \end{equation}
Likewise, we denoted by $\ul{P}_c$ and $\ul{P}_h$ the projections on $\ul{W}_c$ and $\ul{W}_h$ for
\begin{equation}\label{eq:5.22}
 V^n \times \Omega = \ul{W}_c \oplus \ul{W}_h \, .
 \end{equation}
Because the spaces $W_{c/h}$ and $\ul{W}_{c/h}$ are $\{A_{\sigma_i}\}$ and $\{\ul{A}_{\sigma_i}\}$-invariant, respectively, it follows that $P_{c/h}$ and $\ul{P}_{c/h}$ commute with  $A_{\sigma_i}$ respectively $\ul{A}_{\sigma_i}$.

\begin{proof}[Proof of theorem \ref{T5}]

We begin by constructing a vector field on $\ul{W}_c$ conjugate to $\ul{\Gamma}_f|_{M_c}$, satisfying an analogue of the three bullet points in theorem \ref{T5}. From it, we then construct  the required vector field on $W_c \times \Omega$. 

It is clear that the projection $\ul{P}_c|_{M_c}:M_c \rightarrow \ul{W}_c$ defines a global chart for the manifold $M_c$. Hence, by taking the pushforward of $\ul{\Gamma}_f|_{M_c}$ we get a $C^k$-vector field $R_1$ on $\ul{W}_c$ defined by
\begin{equation}\label{eq:5.23}
R_1(\ul{x}_c) = \ul{P}_c \ul{\Gamma}_f(\ul{x}_c + \psi(\ul{x}_c))  \  \text{ for } \ul{x}_c \in \ul{W}_c \, .
\end{equation}
We note that it has the following properties:  First of all, because $\psi(0) = 0$ and $\ul{\Gamma}_f(0) = 0$, we see that
\begin{equation}\label{eq:5.24}
R_1(0) = \ul{P}_c \ul{\Gamma}_f( 0) = 0 \, .
\end{equation}
Next, the derivative of $R_1$ at the origin satisfies

\begin{equation}\label{eq:5.25}
DR_1(0)v = \ul{P}_c D\ul{\Gamma}_f(0)(v + D\psi(0)v) = \ul{P}_c D\ul{\Gamma}_f(0)v 
\end{equation}
for all $v \in \ul{W}_c$, where we have used that $D\psi(0) = 0$. Hence, we have the identity $DR_1(0) = \ul{P}_c D\ul{\Gamma}_f(0)|_{\ul{W}_c}$. From this it follows that the spectrum of $DR_1(0)$ lies entirely on the imaginary axis.
Finally, the vector field $R_1$ shares the symmetries of $\ul{\Gamma}_f$. Indeed, by using lemma \ref{L1} we get
\begin{equation}\label{eq:5.26}
\begin{split}
R_1(\ul{A}_{\sigma_i}\ul{x}_c) = \hspace{0.1 cm}&\ul{P}_c \ul{\Gamma}_f(\ul{A}_{\sigma_i}\ul{x}_c + \psi(\ul{A}_{\sigma_i}\ul{x}_c)) = \\
&\ul{P}_c \ul{\Gamma}_f(\ul{A}_{\sigma_i}\ul{x}_c + \ul{A}_{\sigma_i}\psi(\ul{x}_c)) = \\
&\ul{P}_c \ul{A}_{\sigma_i}\ul{\Gamma}_f(\ul{x}_c + \psi(\ul{x}_c)) = \\
& \ul{A}_{\sigma_i}\ul{P}_c\ul{\Gamma}_f(\ul{x}_c + \psi(\ul{x}_c)) = \\
& \ul{A}_{\sigma_i}R_1(\ul{x}_c) \, ,
\end{split}
\end{equation}
for all $i \in \{1, \dots n\}$ and $\ul{x}_c \in \ul{W}_c$. 

Next, we define the linear map 
\begin{equation}\label{eq:5.27}
\begin{aligned}
P': \ul{W}_c \rightarrow W_c \times \Omega\, , \ 
(x,\la) \mapsto (P_c(x), \la) \, .
\end{aligned}
\end{equation}
By lemma \ref{L2}, we know that the space $\ul{W}_c$ can be written as

\begin{equation}\label{eq:5.28}
\ul{W}_c = ({W}_c,0) \oplus \spn\{\ul{w}_i:i = 1, \dots l\} \, ,
\end{equation}
for vectors $\ul{w}_i = (w_i, \la_i)$ with $w_i \in W_h$ and $\{ \la_i\}_{i=1}^l$ a basis for $\Omega$. Because $P'$ is the identity on $(W_c , 0)$ and sends the elements $\ul{w}_i = (w_i, \la_i)$ to $(0,\la_i)$, we conclude that it is a bijection. Furthermore, the  map $P'$ is $\{\ul{A}_{\sigma_i}\}$-equivariant, as 
\begin{equation}\label{eq:5.29}
\begin{aligned}
P' \circ \ul{A}_{\sigma_i}(x,\la) &=&&P'(A_{\sigma_i}x,\la) &=&& (P_c ( A_{\sigma_i}x),\la) &= \\
 & &&( A_{\sigma_i}  P_c (x),\la) &=&& \ul{A}_{\sigma_i} (P_c (x),\la)   &=     \ul{A}_{\sigma_i} \circ P'(x,\la) \, ,
\end{aligned}
\end{equation}
for all $i \in \{1, \dots n\}$ and $(x,\la) \in \ul{W}_c$. Note that this also implies the $\{\ul{A}_{\sigma_i}\}$-invariance of $W_c \times \Omega$. Taking the pushforward of $R_1$ under $P'$ now yields a $C^k$-vector field $R_2$ on $W_c \times \Omega$ given by
\begin{equation}\label{eq:5.30}
R_2(\ul{x}) = P' \circ R_1 \circ P'^{-1}(\ul{x}) = P' \circ \ul{P}_c \circ \ul{\Gamma}_f[P'^{-1}(\ul{x}) + \psi(P'^{-1}(\ul{x})) ]\, ,
\end{equation}
for $\ul{x}$ in $W_c \times \Omega$. From the properties of $R_1$ it follows that $R_2$ maps $0$ to $0$, that $DR_2(0)$ has a purely imaginary spectrum and that $R_2$ is $\{\ul{A}_{\sigma_i}\}$-equivariant.  

Finally, we want to show that the conjugacy $P := P' \circ \ul{P}_c: V^n \times \Omega \rightarrow W_c \times \Omega$ is as stated in theorem \ref{T5}, i.e. that $P(x,\la) = (P_c(x), \la)$. However, we know  that $\ul{P}_c$ vanishes on $\ul{W}_h = (W_h,0)$, hence so does $P$. Likewise, we may conclude that $P$ is the identity on $(W_c,0)$, as both $\ul{P}_c$ and $P'$ are. Moreover, for any of the elements $\ul{w}_i := (w_i, \la_i) \in \ul{W}_c$ we have that $P(w_i, \la_i) = P'(w_i, \la_i) = (0,\la_i)$, where we have used that $w_i \in W_h$. This proves that $P$ is indeed of the required form. In particular, since it is the identity on the $\Omega$-component, and since $\ul{\Gamma}_f(\ul{x})$ has $\Omega$-component $0$, we conclude that 

\begin{equation}\label{eq:5.31}
\begin{aligned}
R_2(\ul{x}) = & P' \circ \ul{P}_c \circ \ul{\Gamma}_f[ P'^{-1}(\ul{x}) + \psi(P'^{-1}(\ul{x})) ]= \\ & P \circ \ul{\Gamma}_f[ P'^{-1}(\ul{x}) + \psi(P'^{-1}(\ul{x})) ] 
\end{aligned}
\end{equation}
has vanishing $\Omega$-component as well. Therefore, we may write it as
\begin{equation}\label{eq:5.32}
\begin{split}
R_2(x,\la)&= (R(x, \la) ,0) \, ,
\end{split}
\end{equation}
and it follows from the properties of $R_2$ that $R(0,0) = 0$, that $DR_0(0)$ has a purely imaginary spectrum, where we have set $R_0 := R(\bullet,0)$, and that $R_0$ is $\{A_{\sigma_i}\}$-equivariant. This proves the theorem.
\end{proof}
\noindent Next we want to describe all the reduced vector fields $R$ that can be obtained after center manifold reduction in a fundamental network vector field through the procedure of theorem \ref{T5}. We start with the linear part of $R$.

\begin{thr}\label{linall}
Let $\ul{\Gamma}_f$ be a fundamental network vector field satisfying the conditions of theorem \ref{T1} and let $R: W_c \times \Omega \rightarrow W_c$ be its corresponding reduced vector field. Then the linear part of $R$ is given explicitly by 

\begin{equation}\label{eq:5.00k}
\begin{aligned}
&D_{x}R(0,0) = D\Gamma_{f,0}(0)|_{W_c}\\
&D_{\la}R(0,0) = P_c \circ D_{\la}{\Gamma}_f(0,0)\,.
\end{aligned}
\end{equation}
Moreover, let 

\begin{equation}\label{eq:5.01k}
V^n = W_1 \oplus W_2
\end{equation}
be any decomposition of $V^n$ into $\{A_{\sigma_i} \}$-invariant spaces and suppose that we are given a linear map

\begin{equation}\label{eq:5.02k}
\tilde{R}:W_1 \times \Omega \rightarrow W_1
\end{equation}
such that $\tilde{R}|_{W_1\times \{0\}}$ has a purely imaginary spectrum. Assume furthermore that $\tilde{R}$ intertwines the action of $\{\ul{A}_{\sigma_i}\}$ on $W_1 \times \Omega$ with that of $\{{A}_{\sigma_i}\}$ on $W_1$, i.e. that $R(A_{\sigma_{i}}x_c,\la) =  A_{\sigma_i}R(x_c,\la)$ for all $(x_c,\la) \in W_c \times \Omega$ and all $\sigma_i \in \Sigma$. Then there exists a fundamental network vector field $\ul{\Gamma}_g$ such that the center and hyperbolic subspaces of $D{\Gamma}_{g,0}(0)$ are equal to $W_1$ respectively $W_2$ and such that (the linear part of) its reduced vector field is equal to $\tilde{R}$.
\end{thr}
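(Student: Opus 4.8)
The plan is to prove the two assertions of Theorem~\ref{linall} separately, in each case by unwinding the explicit construction of the reduced vector field carried out in the proof of Theorem~\ref{T5}. For the formula \eqref{eq:5.00k}, recall from \eqref{eq:5.31}--\eqref{eq:5.32} that $R$ is characterised by $(R(\ul x),0) = P\circ\ul\Gamma_f\bigl[(P')^{-1}\ul x + \psi((P')^{-1}\ul x)\bigr]$, where $P(x,\la)=(P_c x,\la)$ and $\psi(0)=0$, $D\psi(0)=0$. Differentiating at the origin annihilates the $\psi$-contribution, so $DR(0,0)=P\circ D\ul\Gamma_f(0)\circ(P')^{-1}$, and I would evaluate this on the two natural families of vectors. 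On a vector $(v_x,0)$ with $v_x\in W_c$: the map $(P')^{-1}$ fixes $(W_c,0)$, then $D\ul\Gamma_f(0)$ sends $(v_x,0)$ to $(D\Gamma_{f,0}(0)v_x,0)$, which again lies in $(W_c,0)$ because $W_c$ is $D\Gamma_{f,0}(0)$-invariant, and finally $P$ fixes $(W_c,0)$; hence $D_xR(0,0)=D\Gamma_{f,0}(0)|_{W_c}$. On a vector $(0,v_\la)$: Lemma~\ref{L2} gives $(P')^{-1}(0,v_\la)=(w,v_\la)$ with $w\in W_h$, and the block form \eqref{eq:5.15} of $D\ul\Gamma_f(0)$ yields $D\ul\Gamma_f(0)(w,v_\la)=(D\Gamma_{f,0}(0)w+D_\la\Gamma_f(0,0)v_\la,\,0)$; since $D\Gamma_{f,0}(0)w\in W_h$ it is killed by $P_c$, leaving $D_\la R(0,0)=P_c\circ D_\la\Gamma_f(0,0)$. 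This establishes \eqref{eq:5.00k}.

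For the realization statement, write $L_1:=\tilde R|_{W_1\times\{0\}}\colon W_1\to W_1$ and $N:=D_\la\tilde R(0,0)\colon\Omega\to W_1$. The intertwining hypothesis on $\tilde R$ says precisely that $L_1$ commutes with each $A_{\sigma_i}|_{W_1}$ and that $A_{\sigma_i}|_{W_1}N=N$ for all $i$; the latter forces $\operatorname{Im}N$ into the common fixed subspace of all $A_{\sigma_i}$ inside $W_1$, which equals $W_1\cap\Delta$, where $\Delta:=\{(u,\dots,u):u\in V\}\subset V^n$ is the diagonal. (One checks directly, using $\sigma_1=\mathrm{Id}$, that $\Delta$ is exactly the set of vectors fixed by every $A_{\sigma_i}$.) Now I would define $L\colon V^n\to V^n$ by $L|_{W_1}:=L_1$ and $L|_{W_2}:=-\mathrm{Id}_{W_2}$. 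Since $W_1,W_2$ are $\{A_{\sigma_i}\}$-invariant, $L_1$ commutes with $A_{\sigma_i}|_{W_1}$, and scalar maps commute with $A_{\sigma_i}|_{W_2}$, the map $L$ commutes with every $A_{\sigma_i}$; hence by Theorem~\ref{symfun} we have $L=\Gamma_{L_0}$ with $L_0:=L_{\sigma_1}\colon V^n\to V$. As the spectrum of $L$ is the purely imaginary spectrum of $L_1$ together with $\{-1\}$, the center and hyperbolic subspaces of $L$ are exactly $W_1$ and $W_2$.

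It remains to build the parameter dependence. Let $\delta\colon V\to\Delta$, $u\mapsto(u,\dots,u)$, be the diagonal isomorphism, and let $P_c$ now denote the projection of $V^n$ onto $W_1$ along $W_2$. Since $P_c$ commutes with every $A_{\sigma_i}$, it maps $\Delta$ into $W_1\cap\Delta$ and restricts there to the identity, so $P_c\circ\delta\colon V\to W_1\cap\Delta$ is surjective; as $\operatorname{Im}N\subseteq W_1\cap\Delta$, one can choose a linear $M_0\colon\Omega\to V$ with $P_c\circ\delta\circ M_0=N$. Setting $g(x,\la):=L_0x+M_0\la$, we get $\Gamma_{g,0}=\Gamma_{L_0}=L$, so $D\Gamma_{g,0}(0)$ has center and hyperbolic subspaces $W_1$ and $W_2$, while $D_\la\Gamma_g(0,0)=\delta\circ M_0$ because every component of $\Gamma_g$ depends on $\la$ through the single map $M_0$. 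Applying the formula \eqref{eq:5.00k} just proved, the linear part of the reduced vector field of $\ul\Gamma_g$ has $x$-part $L|_{W_1}=L_1$ and $\la$-part $P_c\circ\delta\circ M_0=N$, i.e.\ it equals $\tilde R$; and since $g$ is linear, $\ul\Gamma_g$ trivially satisfies the hypotheses of Theorem~\ref{T1}.

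I expect the one genuinely delicate point to be the parameter part: recognising that the admissible maps $D_\la\tilde R(0,0)$ are exactly those landing in the fixed subspace $W_1\cap\Delta$, and that on the network side $D_\la\Gamma_g(0,0)$ is automatically forced into $\Delta$, so that these two constraints match after composing with the reduction projection $P_c$. By contrast, the $x$-part falls out comfortably from Theorem~\ref{symfun} together with the freedom to place $-\mathrm{Id}$ on the hyperbolic summand.
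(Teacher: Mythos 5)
Your proof is correct and follows essentially the same route as the paper: part one by differentiating the explicit formula $R_2 = P'\circ \ul P_c\circ \ul\Gamma_f\circ(\mathrm{graph}\ \psi)\circ P'^{-1}$ from the proof of Theorem \ref{T5} and using $D\psi(0)=0$ together with the block form \eqref{eq:5.15}, and part two by assembling the block-diagonal linear field with $\tilde R|_{W_1}$ on $W_1$, $-\Id$ on $W_2$, and a $\lambda$-column valued in the diagonal. The only difference is that you make explicit the step the paper leaves implicit — lifting the $\lambda$-part $N$ through $P_c\circ\delta$ to an actual response function $g(x,\la)=L_0x+M_0\la$ — where the paper merely observes that the $\lambda$-column has all components equal and asserts the matrix is of the form $\ul\Gamma_g$.
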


\begin{proof}
Recall from the proof of theorem \ref{T5} that we have 

\begin{equation}\label{eq:5.03k}
DR_2(0,0) = P' \circ \ul{P}_c \circ D\ul{\Gamma}_f(0)|_{\ul{W}_c} \circ  P'^{-1}\, ,
\end{equation}
where $R_2= (R,0): W_c \times \Omega \rightarrow W_c \times \Omega$ and where $P': \ul{W}_c \rightarrow W_c \times \Omega$ is given by $P'(x,\la) = (P_c(x), \la)$. The linear map $P'$ is the identity on $(W_c,0)$ and sends the elements $(w_i,\la_i) \in \ul{W}_c$ from lemma \ref{L2} to $(0,\la_i)$, from which it follows that we may write 

\begin{equation}\label{eq:5.04k}
P'^{-1}(x_c,\la) = (x_c + Q(\la),\la ) 
\end{equation}
for a linear map $Q: \Omega \rightarrow W_h$. Explicitly this map is given by

\begin{equation}\label{eq:5.05k}
Q(\la_i) = w_i
\end{equation}
for $(w_i,\la_i)$ an element as described in lemma \ref{L2} and where we use that $\{\la_i\}_{i=1}^l$ forms a basis for $\Omega$. From this we see that

\begin{equation}\label{eq:5.06k}
\begin{aligned}
&DR_2(0,0)(x_c,\la) = \\
&P' \circ \ul{P}_c \circ D\ul{\Gamma}_f(0)(x_c + Q(\la),\la) = \\
&P'[\,(\,D{\Gamma}_{f,0}(0)(x_c + Q(\la)) + D_{\la}{\Gamma}_f(0,0)(\la),0\,)\,] = \\
&(\,P_c[D{\Gamma}_{f,0}(0)(x_c + Q(\la)) + D_{\la}{\Gamma}_f(0,0)(\la)],0\,) = \\
&(\,D{\Gamma}_{f,0}(0)(x_c) + P_c \circ D_{\la}{\Gamma}_f(0,0)(\la),0\,)\, ,
\end{aligned}
\end{equation}
where in the second step we have used that the linearisation of $\ul{\Gamma}_f$ is given by
\begin{equation}
D\ul{\Gamma}_f(0) =  \begin{pmatrix}
D\Gamma_{f,0}(0) & D_{\la}{\Gamma}_f(0,0) \\
 0 & 0
\end{pmatrix}  \, .
\end{equation}
As $R$ is defined by $R_2= (R,0)$, we see that indeed 

\begin{equation}\label{eq:5.07k}
\begin{aligned}
&D_{x}R(0,0) = D\Gamma_{f,0}(0)|_{W_c}\\
&D_{\la}R(0,0) = P_c \circ D_{\la}{\Gamma}_f(0,0)\,.
\end{aligned}
\end{equation}
This proves the first part of the theorem.

As for the second part, if $W_1$, $W_2$ and $\tilde{R}$ are given as in the statement of the theorem, then we may define a  linear vector field on $V^n \times \Omega = W_1 \oplus W_2 \oplus \Omega$ by the matrix

\begin{equation}
A :=  \begin{pmatrix}
\tilde{R}|_{W_1} & 0 & \tilde{R}|_{\Omega} \\
 0  & {(-) \Id_{W_2}} & 0 \\
 0 & 0 & 0 
\end{pmatrix}  \, .
\end{equation}
We claim that $A$ is a $\la$-family of fundamental network vector fields. Indeed, it follows from the invariance of $W_1$ and $W_2$ and from the equivariance of $\tilde{R}$ that $A$ commutes with $\ul{A}_{\sigma_i} = (A_{\sigma_i}, \Id_{\Omega})$ for all $\sigma_i \in \Sigma$. Note in particular that this implies that the map

\begin{equation}
v :=  \begin{pmatrix}
\tilde{R}|_{\Omega}\\
0
\end{pmatrix}  
: \Omega \rightarrow V^n
\end{equation}
from the right hand corner of $A$ satisfies $A_{\sigma_i} v = v$. From this it follows that $v_{\sigma_i} = (A_{\sigma_i} v )_{\sigma_1}= v_{\sigma_1}$ for all $\sigma_i \in \Sigma$, where $\sigma_1$ denotes the unit in $\Sigma$. Hence the $n$ components of $v(\la) \in V^n$ are all equal. This latter fact is necessarily the case for a $\la$-family of fundamental network vector fields, since it is only the response function and not the network structure that depends on $\la$. We will therefore write $A = \ul{\Gamma}_g$. 

It is clear that $(-)\Id_{W_2}$ has a purely hyperbolic spectrum, whereas $\tilde{R}|_{W_1}$ is given to have only eigenvalues on the imaginary axis. Hence we conclude that the center and hyperbolic subspaces of $D{\Gamma}_{g,0}(0)$ are equal to $W_1$ respectively $W_2$. Furthermore, it follows from the first part of the theorem that the reduced vector field of $A = \ul{\Gamma}_g$ is indeed equal to $\tilde{R}$. This concludes the proof.
\end{proof}

\noindent Theorem \ref{linall} tells us that any linear map satisfying the bullet points of theorem \ref{T5} can occur as the linear part of the reduced vector field of a fundamental network vector field. The following result tells us that furthermore any equivariant nonlinear part can be realised in a reduced vector field.

\begin{thr}\label{nonlin}
Let $A: V^n \times \Omega \rightarrow V^n \times \Omega$ be a (fixed) linear fundamental network vector field and let $\ul{W}_c$, $\ul{W}_h$,  $W_c$ and $W_h$ be the invariant spaces determined by $A$ and $A|_{V^n}$. It follows from theorem \ref{linall} that the linear part of the reduced vector field $R$ of a fundamental network vector field $\ul{\Gamma}_f$ is completely determined by the linear part of $\ul{\Gamma}_f$. In particular, if $D\ul{\Gamma}_f(0) = A$ then we will denote the linear part of $R$ by 

\begin{equation}
\tilde{A}:= DR(0): W_c \times \Omega \rightarrow W_c\, .
\end{equation} 
Let $G: W_c \times \Omega \rightarrow W_c$ be a $C^1$ map satisfying $G(0) = 0$ and $DG(0) = 0$ and assume furthermore that $G \circ \ul{A}_{\sigma_i} = A_{\sigma_i} \circ G$ for all $\sigma_i \in \Sigma$. Then there exists a fundamental network vector field $\ul{\Gamma}_f$ with linear part $A$ satisfying the conditions of theorem $\ref{T1}$ and with reduced vector field given locally by $\tilde{A} + G$.

\end{thr}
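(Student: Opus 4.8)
\noindent\emph{Proof strategy.} The plan is to exploit that the reduced vector field of Theorem~\ref{T5} becomes completely explicit as soon as the center manifold is \emph{flat}, i.e.\ as soon as the map $\psi$ of Theorem~\ref{T1} vanishes: then $R_2(\ul{x}) = P'\circ \ul{P}_c\circ \ul{\Gamma}_f\circ (P')^{-1}(\ul{x})$. So I would first construct a fundamental network vector field with linear part $A$ for which $\ul{W}_c$ is an honest invariant subspace, arrange its restriction to $\ul{W}_c$ so that transporting by $P'$ yields exactly $\tilde{A}+G$, and only afterwards run the bump procedure of Section~\ref{sec3} to satisfy the bounds of Theorem~\ref{T1}.

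For the construction, put $\tilde{G} := (P')^{-1}\circ \hat{G}\circ P' : \ul{W}_c\to \ul{W}_c$, where $\hat{G}(x,\la):=(G(x,\la),0)$ on $W_c\times\Omega$. Using $P'(x,\la)=(P_c x,\la)$ and $(P')^{-1}(x_c,\la)=(x_c+Q(\la),\la)$ with $Q$ linear and $Q(0)=0$ (Lemma~\ref{L2}, \eqref{eq:5.04k}--\eqref{eq:5.05k}), one computes at once that $\tilde{G}(x,\la)=(G(P_c x,\la),0)$; hence $\tilde{G}$ is $C^1$, maps into $(W_c,0)\subseteq\ul{W}_c$, has vanishing $\Omega$-component, satisfies $\tilde{G}(0)=0$, $D\tilde{G}(0)=0$, and is $\{\ul{A}_{\sigma_i}\}$-equivariant because $G$ and $P'$ are. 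Now set $N:=\tilde{G}\circ\ul{P}_c : V^n\times\Omega\to V^n\times\Omega$. Since $\ul{P}_c$ commutes with the $\ul{A}_{\sigma_i}$ and is the identity on $\ul{W}_c$, the map $N$ is equivariant, has image in $(W_c,0)$, satisfies $N(0)=0$, $DN(0)=0$, restricts to $\tilde{G}$ on $\ul{W}_c$, and vanishes on $\ul{W}_h$. Therefore $F_0:=A+N$ is $\{\ul{A}_{\sigma_i}\}$-equivariant, so by Theorem~\ref{symfun} $F_0=\ul{\Gamma}_{f_0}$ with $f_0:=(F_0)_{\sigma_1}$; writing $A=\ul{\Gamma}_h$ this reads $\ul{\Gamma}_{f_0}=A+\ul{\Gamma}_{g_0}$ with $g_0:=f_0-h$ the response function of $N$, and $g_0(0)=0$, $Dg_0(0)=0$. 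Both $\ul{W}_c$ and $\ul{W}_h$ are invariant for $F_0$, since $A$ leaves them invariant and $N$ sends $\ul{W}_c$ into $(W_c,0)\subseteq\ul{W}_c$ while annihilating $\ul{W}_h$.

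Because $g_0$ need not meet the growth bounds of Theorem~\ref{T1}, I would then apply the bump construction of Section~\ref{sec3}: choose a bump function $\chi$ on $V^n\times\Omega$ equal to $1$ on $B_1$ and $0$ outside $B_2$, set $g:=(g_0)_\rho$ as in \eqref{eq:4.20}, and define $\ul{\Gamma}_f:=A+\ul{\Gamma}_g$ (that is, $f=h+g$). By Theorems~\ref{T2} and~\ref{T3}, for $\rho>0$ small enough $\ul{\Gamma}_f$ satisfies the conditions of Theorem~\ref{T1}, still has linear part $A$ (as $\ul{\Gamma}_g$ vanishes to first order at $0$), and coincides with $\ul{\Gamma}_{f_0}=A+N$ on an open neighbourhood $U$ of $0$. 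On $U$ the set $\ul{W}_c\cap U$ is invariant (because $\ul{W}_c$ is invariant for $A+N$) and tangent at $0$ to the center subspace $\ul{W}_c$ of $D\ul{\Gamma}_f(0)=A$, hence it is a local center manifold with $\psi\equiv 0$. Feeding $\psi\equiv 0$ into $R_2(\ul{x})=P'\circ\ul{P}_c\circ\ul{\Gamma}_f[(P')^{-1}\ul{x}+\psi((P')^{-1}\ul{x})]$ from the proof of Theorem~\ref{T5}, and using $(A+N)(\ul{W}_c)\subseteq\ul{W}_c$ so that $\ul{P}_c$ is the identity there, yields for $\ul{x}$ near $0$ that $R_2(\ul{x})=P'\,A\,(P')^{-1}\ul{x}+P'\,\tilde{G}\,(P')^{-1}\ul{x}$. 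The first term equals $(\tilde{A}\ul{x},0)$ by the linear computation in the proof of Theorem~\ref{linall} (which is precisely where $A(\ul{W}_c)\subseteq(W_c,0)$ and $A(Q(\la))\in W_h$ enter), and the second equals $P'(P')^{-1}\hat{G}(\ul{x})=(G(\ul{x}),0)$ by the definition of $\tilde{G}$. Thus $R=\tilde{A}+G$ on a neighbourhood of $0$ in $W_c\times\Omega$, which is the assertion.

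The step I expect to be the main obstacle is making the word ``locally'' honest, i.e.\ reconciling this picture with the global center manifold for which Theorem~\ref{T5} is literally stated. The bump modification in general destroys the invariance of $\ul{W}_c$ away from the origin, and one cannot repair this by averaging the bump over $\Sigma$: there is no nonconstant $\{\ul{A}_{\sigma_i}\}$-invariant function on $V^n\times\Omega$, which is exactly the obstruction separating the monoid setting from the compact-group setting treated in Section~\ref{sec3}. What rescues the argument is that the reduced vector field is a local object: on $U$ the field $\ul{\Gamma}_f$ agrees with $A+N$, for which $\psi\equiv 0$ is the unique formal solution of the invariance equation at the origin, so every center manifold of $\ul{\Gamma}_f$ is tangent to $\ul{W}_c$ at $0$ up to the available order of differentiability, and the reduced vector field computed on the local center manifold $\ul{W}_c\cap U$ is \emph{exactly} $\tilde{A}+G$ near $0$. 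I would phrase the final statement with this local center manifold, matching the ``locally'' in the theorem.
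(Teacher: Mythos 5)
Your construction is essentially the one in the paper: your $N=\tilde G\circ\ul P_c$ is exactly the paper's extension $(\tilde G,0)$ of $\tilde G=P'^{-1}\circ(G,0)\circ P'$ to $V^n\times\Omega=\ul W_c\oplus\ul W_h$, you invoke theorem \ref{symfun} in the same way to see that $A+N$ is a fundamental network vector field with linear part $A$, and you cut off with the bump of Section \ref{sec3} before computing the reduced vector field. The one place where you diverge is also the one soft spot. You insist on a \emph{flat} center manifold: you take $\ul W_c\cap U$ as a local center manifold with $\psi\equiv 0$ and compute $R_2$ there, acknowledging that this forces you to reinterpret theorem \ref{T5} on a local rather than the global center manifold. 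But the reduced vector field of theorem \ref{T5} is defined through the global $M_c=\mathrm{graph}(\Psi)$ of theorem \ref{T1}, and the bump generically bends that global manifold, so there is no reason for its $\Psi$ to vanish even near the origin; tangency to all available orders (which is what finite determinacy and the formal invariance equation give you) is not equality of functions, and different local center manifolds carry a priori different, merely conjugate, reduced dynamics. The cleaner resolution — and the one the paper uses — is to make no claim about $\Psi$ at all: since $N$ depends only on the $\ul W_c$-component of its argument and takes values in $\ul W_c$, while $A$ respects the splitting and $\Psi(\ul x_c)\in\ul W_h$, one has for $\ul x_c\in\ul W_c$ small enough that $\ul x_c+\Psi(\ul x_c)\in U$,
\begin{equation*}
\ul P_c\,\ul\Gamma_f\bigl(\ul x_c+\Psi(\ul x_c)\bigr)=\ul P_c A\bigl(\ul x_c+\Psi(\ul x_c)\bigr)+\ul P_c N\bigl(\ul x_c+\Psi(\ul x_c)\bigr)=A\ul x_c+\tilde G(\ul x_c)\, ,
\end{equation*}
whatever $\Psi$ is. Conjugating by $P'$ then gives $R=\tilde A+G$ locally, with no appeal to flatness or to uniqueness of local center manifolds. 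Apart from this last step, your argument matches the paper's proof.
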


\begin{proof}
Given $G: W_c \times \Omega \rightarrow W_c$ we may define the vector field $(G,0)$ on $W_c \times \Omega$ by 

\begin{equation}
(G,0)(x_c,\la) := (G(x_c,\la),0)\, .
\end{equation} 
 Next, we define the vector field on  $\ul{W_c}$ given by

\begin{equation}
\tilde{G}:= P'^{-1}\circ (G,0) \circ P' \, .
\end{equation} 
Note that $\tilde{G}$ is $\{\ul{A}_{\sigma_i}\}$-equivariant by construction and satisfies $\tilde{G}(0) = 0$ and $D\tilde{G}(0) = 0$. We furthermore see that $\tilde{G}$ has vanishing $\la$-component, as this is the case for $(G,0)$ and because $P'$ respects the $\la$-component. These properties likewise hold for the vector field $(\tilde{G},0)$ on $\ul{W_c} \oplus \ul{W_h} = V^n \times \Omega$, from which it follows that the vector field $A+(\tilde{G},0)$ is a fundamental network vector field with linear part $A$. 

 Finally, let $\ul{\Gamma}_f$ be a fundamental network vector field satisfying the conditions of theorem \ref{T1} and agreeing locally with $A+(\tilde{G},0)$ around the origin. The dynamics on the center manifold of $\ul{\Gamma}_f$ is then  conjugate to

\begin{equation}
\begin{aligned}
&\ul{P}_c\ul{\Gamma}_f(\ul{x}_c + \Psi(\ul{x}_c)) = \ul{P}[A+(\tilde{G},0)](\ul{x}_c + \Psi(\ul{x}_c)) = \\
&\ul{P}_cA(\ul{x}_c + \Psi(\ul{x}_c))+ \tilde{G}(\ul{x}_c) = A(\ul{x}_c)+ \tilde{G}(\ul{x}_c) = \\
 &(A+\tilde{G})(\ul{x}_c)
\end{aligned}
\end{equation}
for $\ul{x}_c \in \ul{W}_c$ sufficiently close to the origin. Conjugating by $P'$ we get the system

\begin{equation}
\dot{x} = (\tilde{A}x + G(x),0)
\end{equation} 
on a neighborhood around the origin in $W_c \times \Omega$, from which we conclude that the reduced vector field of $\ul{\Gamma}_f$ is indeed given locally by $\tilde{A}+G$. This proves the theorem.
\end{proof}

\begin{percent}

The reduced vector field $R$ of $\ul{\Gamma}_f$ is given by the $W_c$-part of the vector field 
\begin{equation}
R_2(\ul{x})  = P' \circ \ul{\Gamma}_f[ P'^{-1}(\ul{x}) + \psi(P'^{-1}(\ul{x}))] \ \mbox{on}\  W_c \times \Omega\, ,
\end{equation}
the $\Omega$-part of $R_2$ being zero. Here, $P': \ul{W}_c\to W_c\times\Omega$ was defined by $P':(x,\lambda) \mapsto (P_c(x),\lambda)$. From this it follows that 
\begin{equation}
DR_2(0)  = P' D\ul{\Gamma}_f(0) P'^{-1} \, ,
\end{equation}
because $D\psi(0)=0$. We note that $DR_2(0)|_{(W_c,0)} = (D\ul{\Gamma}_{f,0}(0)|_{W_c},0)$. Furthermore, writing 
\begin{equation}
\ul{W}_c = ({W}_c,0) \oplus \spn\{\ul{w}_i:i = 1, \dots l\} \quad
\end{equation}
for vectors $\ul{w}_i = (w_i, \la_i)$ with $w_i \in W_h$ and $\{ \la_i\}_{i=1}^n$ a basis for $\Omega$, we have that 
\begin{equation}
\begin{split}
DR_2(0)(0, \la_i) &= P D\ul{\Gamma}_f(0) P'^{-1}(0,\la_i) = P D\ul{\Gamma}_f(0) (w_i,\la_i) \\
&= P (D\ul{\Gamma}_{f,0}(0)w_i + v\la_i,0) = (P_cv\la_i,0) \, .
\end{split}
\end{equation}

\noindent We conclude that the linearisation of the reduced vector field is of the form
 
\begin{equation}
DR(0)(x_c,\la) = A_c x_c + v_c \la \, ,
\end{equation}
for $x_c \in W_c$, $\la \in \Omega$, and where $A_c := D\ul{\Gamma}_{f,0}(0)$ is $\{A_{\sigma_i}\}$-equivariant and $v_c=P_cv: \Omega\to W_c$ is such that $A_{\sigma_i}v_c=v_c$ for all $i=1, \ldots, n$.

Suppose now that we are given an arbitrary $\{A_{\sigma_i}\}$-equivariant linear map $A_c: W_c \rightarrow W_c$ with strictly imaginary spectrum and an arbitrary linear map $v_c: \Omega \rightarrow W_c$ such that $A_{\sigma_i}v_c = v_c$ for all $i = 1, \ldots, n$. We may then construct the linear map $A : W_c \times W_h \times \Omega \rightarrow W_c \times W_h \times \Omega$ given by
\begin{equation}
A =  \begin{pmatrix}
A_c & 0 & v_c \\
 0  & {\rm Id} & 0 \\
 0 & 0 & 0 
\end{pmatrix}  \, .
\end{equation}
It follows from its symmetry that $A$ is a $\la$-indexed family of network vector fields. Furthermore, the center subspace of $A$ equals $W_c$ and the reduced vector field of  $A$ is exactly given by   \begin{equation}
R(x_c,\la) = A_c x_c + v_c\la \, .
\end{equation}

\noindent This shows that any linear $R$ satisfying the conditions of theorem \ref{T5} can be obtained as a reduced vector field.

We now prove the same for nonlinear $R$. For this purpose, we let $A: V^n \times \Omega \to V^n \times \Omega$ be a fixed $\{\ul{A}_{\sigma_i}\}$-equivariant linear map with fixed center and hyperbolic subspaces $\ul{W}_{c/h}$, and such that every element in its image has vanishing $\Omega$-component. We will show that any nonlinear equivariant map $G_c: W_c\times \Omega \to W_c$ can arise as the nonlinear part of $R$. Indeed, assume that $G_{c}(0,0) = 0$ and $DG_{c}(0,0) = 0$ and that $G_c$ is $\{A_{\sigma_i}\}$-invariant, but arbitrary otherwise. Define the map
$F: W_c\times W_h\times \Omega \to W_c\times W_h\times \Omega$ by
$$F(x_c, x_h, \lambda) := A(x, \lambda) +(G_c(x_c,\lambda), 0, 0)\, .$$
It is clear that $F$ is $\{ \ul{A}_{\sigma_i}\}$-equivariant and hence has the structure of a homogeneous coupled cell network. A center manifold of $F$ is the linear subspace $\ul{W}_{c}$, and a computation shows that the reduced vector field of $F$ is equal to 
$$R(x_c, \lambda)=P' A|_{\ul{W}_c} (x_c, 0, \lambda) + G_c(x_c, \lambda)$$
as required.

\noindent Indeed, suppose that we are given an arbitrary $\{\ul{A}_{\sigma_i}\}$-equivariant  vector field

\begin{equation}\label{eq:5.37}
\begin{split}
&G_c: \ul{W}_c \rightarrow (W_c,0) \subset  \ul{W}_c\, , 
\end{split}
\end{equation}
satisfying $G_{c/h}(0,0) = 0$ and $DG_{c/h}(0,0) = 0$. It is then clear that the product
\[F_{\oplus}:= A + G_c \oplus G_h = (A|_{\ul{W}_c} + G_c) \oplus  (A|_{\ul{W}_h} + G_h): V^n \times \Omega \rightarrow V^n \subset V^n \times \Omega\]
is $\{\ul{A}_{\sigma_i}\}$-equivariant  as well. Furthermore, because it has vanishing $\Omega$-component, we may conclude by theorem \ref{symfun} that it is a $\la$-indexed family of network vector fields.  Now, although this system may not have a unique center manifold, it can immediately be seen that the linear space $\ul{W}_c$ itself is a center manifold. This is because the $\ul{W}_h$-component of $F_{\oplus}$ does not depend on $\ul{W}_c$ (and vice versa). In particular, the dynamics on this center manifold is just given by $A|_{\ul{W}_c} + G_c$. 

We know from the preceding section that there exists a network vector field $\widetilde{F}_{\oplus}$ that agrees with $F_{\oplus}$ in a neighbourhood around the origin, yet which does admit a unique center manifold. Some care is in order though, as $\widetilde{F}_{\oplus}$ may not be in the same form as $F_{\oplus}$ anymore. Specifically, its $\ul{W}_h$-component may depend on the $\ul{W}_c$-component of its inputs, from which it follows that the center manifold may not be equal to $\ul{W}_c$ anymore.  Nevertheless, sufficiently close to the origin the dynamics on the center manifold is still conjugate to $A|_{\ul{W}_c} + G_c$. Indeed, taking the push forward of $\widetilde{F}_{\oplus}|_{M_c}$ by the projection $ \ul{P}_c$ gives 
\begin{equation}
\begin{split}
\ul{P}_c\widetilde{F}_{\oplus}(\ul{x}_c + \psi(\ul{x}_c)) &= \ul{P}_c[(A|_{\ul{W}_c} + G_c)(\ul{x}_c)+(A|_{\ul{W}_h} + G_h)(\psi(\ul{x}_c) )] \\
&= (A|_{\ul{W}_c} + G_c)(\ul{x}_c) \, ,
\end{split}
\end{equation}
for $\ul{x}_c\in \ul{W}_c$ sufficiently close to the origin. Thus, we can construct a vector field on $\ul{W}_c$ that agrees (globally or locally) with $A|_{\ul{W}_c} + G_c$. Following the proof of theorem \ref{T5}, we use the fact that conjugating by the map $P': (x,\la) \mapsto (P_c(x),\la)$ defines a bijection between 
\[ \{ \ul{A}_{\sigma_i}\text{-invariant vector fields on }\ul{W}_c\text{ with vanishing }\Omega\text{-component}\}\]
on the one side, and
\[ \{\ul{A}_{\sigma_i}\text{-invariant vector fields on }W_c \times \Omega\text{ with vanishing }\Omega\text{-component}\}\]
 on the other. Hence, any $\{\ul{A}_{\sigma_i}\}$-equivariant function $G'_c: W_c \times \Omega \rightarrow (W_c, 0) \subset W_c \times \Omega$ satisfying $G'_c(0) = 0$ and $DG'_c(0) = 0$ is the nonlinear part of a reduced vector field for a network vector field with linearisation $A$.

\end{percent}

  \noindent Combining theorems \ref{linall} and \ref{nonlin} we see that any vector field $R$ satisfying the bullet points of theorem \ref{T5} can be achieved as the reduced vector field of some fundamental network vector field $\ul{\Gamma}_f$. The linear part of $R$ is completely determined by the linear part of $\ul{\Gamma}_f$, and we see that the nonlinear part of $R$ can  be any equivariant map on $W_c$ (which is determined once the linear part of $\ul{\Gamma}_f$ is fixed). This last observation will be important in the following remark.

\begin{remk}\label{rem2}
It is well known that a center manifold for the general ODE $\dot{x} = F(x)$ satisfies the tangency equation 
\begin{equation}\label{eq:5.38}
D\psi(x_c) \cdot \pi_c F(x_c + \psi(x_c)) = \pi_h F(x_c + \psi(x_c)) \, ,
\end{equation}
for $x_c \in X_c$, and where the graph of $\psi:X_c \rightarrow X_h$ equals the center manifold. In particular, keeping $\pi_c$ and $\pi_h$, that is $X_c$ and $X_h$ fixed, one can use this formula to express any Taylor coefficient of $\psi$ around $0$ as a rational function of a finite set of Taylor coefficients  of $F$ around $0$. See for example \cite{vdbauw} or \cite{wim}. This phenomenon is known as \textit{finite determinacy}. 

Returning to the setting of networks, if $D\ul{\Gamma}_f(0)$ and therefore $W_{c/h}$ and $\ul{W}_{c/h}$ are fixed, then the Taylor coefficients of the vector field on $\ul{W}_c$,
\begin{equation}\label{eq:5.39}
R_1(\ul{x}_c) = \ul{P}_c \ul{\Gamma}_f(\ul{x}_c + \psi(\ul{x}_c)) \, ,
\end{equation}
as well as those of 
\begin{equation}\label{eq:5.40} 
R_2(\ul{x}) = P' \circ R_1 \circ P'^{-1}(\ul{x})  \ \ \mbox{for}\ \ul{x} \in {W}_c \times \Omega\, ,
\end{equation}
are given by rational functions of the Taylor coefficients of $\ul{\Gamma}_f$. Combined with theorems \ref{nonlin} and \ref{linall} that state that any reduced vector field can be realised at least locally,
we may conclude that if some rational function of the Taylor coefficients of either of the two vector fields (\ref{eq:5.39}) or (\ref{eq:5.40}) is not forced zero by the symmetry, then it will in general not vanish. More precisely, such a rational function vanishing will be equivalent to some rational function of the coefficients of $\ul{\Gamma}_f$ vanishing. Note that to verify the occurrence of some bifurcation, one often needs to check that some rational functions of the Taylor coefficients of the vector field do not vanish. Therefore, center manifold reduction allows us to determine generic bifurcations in network vector fields. Of course, to verify whether such a bifurcation really occurs in a particular network, one actually has to compute and evaluate these rational functions, which may involve quite a complicated computation. 
\hfill$\triangle$
\end{remk}
\begin{remk}\label{rem3}
 Which linear subspaces may occur as the center subspace $W_c$ in a generic parameter-family of network dynamical systems, is a much more subtle question. The following partial answer is known.  A representation of a semigroup $\Sigma$ is called ``indecomposable'' if its representation space cannot be written as a non-trivial direct sum of invariant subspaces. A result known as the Krull-Schmidt theorem  states that every (finite dimensional) representation of $\Sigma$ can be written as the direct sum of indecomposable representations that is unique up to isomorphism. It is furthermore known that  an indecomposable representation can be classified as being of either real, complex or quaternionic type. It was shown in \cite{RinkSanders3} that under a specific condition on the representation of $\Sigma$, a one-parameter steady state bifurcation can generically only occur if the center subspace $W_c$ is an indecomposable representation of real type.  In particular, this is the case for the fundamental networks of our three  example networks {\bf A}, {\bf B} and {\bf C}. In these examples, the representation space splits as the direct sum of two indecomposable representations of real type, both of which may therefore occur as $W_c$, while the full space $V^3$ can generically not be equal to $W_c$.
 \hfill$\triangle$
\end{remk}

\noindent From the above discussion we see that the problem of finding generic bifurcations for homogenous coupled cell network vector fields is reduced to finding those for a class of equivariant reduced vector fields. As it turns out, this latter class admits a rather straightforward description which states that, roughly speaking, they come with a network structure themselves.  More precisely, we have the following theorem.

\begin{thr}\label{T6}
Let 
\begin{equation}\label{eq:5:21}
V^n = W_1 \oplus W_2 
\end{equation}
be a decomposition of the phase space of a fundamental network
into $\{A_{\sigma_i}\}$-invariant spaces, and denote by $P_1: V^n \rightarrow W_1$ and $i_{1} : W_1 \rightarrow V^n$ respectively the projection onto $W_1$ and the inclusion of $W_1$ into $V^n$. 

A map $F: W_1 \rightarrow W_1$ is $\{A_{\sigma_i}\}$-equivariant  if and only if the map $$i_{1}\circ F\circ P_{1}$$ is a fundamental network vector field.
\end{thr}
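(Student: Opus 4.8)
The plan is to deduce the statement from Theorem~\ref{symfun}, which identifies the fundamental network vector fields on $V^n$ as exactly the maps commuting with every $A_{\sigma_i}$. So I would first restate the goal as: the map $F:W_1\to W_1$ is $\{A_{\sigma_i}\}$-equivariant if and only if $G:=i_1\circ F\circ P_1:V^n\to V^n$ commutes with every $A_{\sigma_i}$. The two ingredients I would set up first are: \emph{(i)} the projection $P_1$ and the inclusion $i_1$ intertwine the representation $\{A_{\sigma_i}\}$ on $V^n$ with its restriction to $W_1$, i.e. $P_1\circ A_{\sigma_i}=A_{\sigma_i}\circ P_1$ and $A_{\sigma_i}\circ i_1=i_1\circ A_{\sigma_i}$, where as usual $A_{\sigma_i}$ denotes both the operator on $V^n$ and its restriction $A_{\sigma_i}|_{W_1}$ (the latter being a legitimate endomorphism of $W_1$ precisely because $W_1$ is invariant); this follows from the invariance of $W_1$ and $W_2$ in exactly the way the analogous statement for $P_{c/h}$ was obtained in Section~\ref{sec4}; and \emph{(ii)} the defining relation $P_1\circ i_1=\Id_{W_1}$, equivalently that $P_1$ restricts to the identity on $W_1$.

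For the forward direction I would simply chain the intertwining relations of (i) with the equivariance hypothesis on $F$:
\begin{equation*}
\begin{split}
G\circ A_{\sigma_i}&=i_1\circ F\circ(P_1\circ A_{\sigma_i})=i_1\circ(F\circ A_{\sigma_i})\circ P_1\\
&=i_1\circ(A_{\sigma_i}\circ F)\circ P_1=(A_{\sigma_i}\circ i_1)\circ F\circ P_1=A_{\sigma_i}\circ G\,,
\end{split}
\end{equation*}
so that $G$ commutes with every $A_{\sigma_i}$ and is therefore, by Theorem~\ref{symfun}, a fundamental network vector field.

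For the converse I would restrict the commutation relation $G\circ A_{\sigma_i}=A_{\sigma_i}\circ G$ to the invariant subspace $W_1$. On $W_1$ the map $P_1$ is the identity by (ii), so $G|_{W_1}=i_1\circ F$; and since $A_{\sigma_i}$ preserves $W_1$, evaluating the restricted relation at $x\in W_1$ gives $i_1\bigl(F(A_{\sigma_i}x)\bigr)=A_{\sigma_i}\,i_1\bigl(F(x)\bigr)=i_1\bigl(A_{\sigma_i}F(x)\bigr)$, whence $F(A_{\sigma_i}x)=A_{\sigma_i}F(x)$ by injectivity of $i_1$. This is precisely the equivariance of $F$.

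I do not expect a genuine obstacle: both implications are short diagram chases through $P_1$, $i_1$ and the $A_{\sigma_i}$. The only place calling for a little care is the bookkeeping forced by writing $A_{\sigma_i}$ for both the operator on $V^n$ and its restriction to $W_1$ — one must track which space each composition lands in — but nothing subtle occurs once one notes that the $\{A_{\sigma_i}\}$-invariance of $W_1$ (and $W_2$) is exactly what makes $A_{\sigma_i}|_{W_1}$ well defined and $i_1$, $P_1$ equivariant.
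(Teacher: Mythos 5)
Your proposal is correct and follows essentially the same route as the paper: both directions reduce, via Theorem \ref{symfun}, to checking commutation of $i_1\circ F\circ P_1$ with the $A_{\sigma_i}$, using the equivariance of $P_1$ and $i_1$ coming from the invariance of $W_1$ and $W_2$. The only cosmetic difference is in the converse, where the paper recovers $F$ as $P_{1}\circ (i_{1}\circ F\circ P_{1})\circ i_{1}$ and invokes closure of equivariance under composition, while you restrict the commutation relation to $W_1$ and use injectivity of $i_1$ — the same computation in different packaging.
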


\begin{proof} Recall that both $P_1$ and $i_1$ are $\{A_{\sigma_i}\}$-equivariant maps. So if $i_{1}\circ F\circ P_{1}$ commutes with $A_{\sigma_i}$ for all $i$, then so does 
\begin{equation}\label{eq:5:22}
F = P_{1}\circ (i_{1}\circ F\circ P_{1})\circ i_{1} \, .
\end{equation}
For the same reason, $i_{1}\circ F\circ P_{1}$ is $\{A_{\sigma_i}\}$-equivariant when  $F$ is. Moreover, it follows from theorem \ref{symfun} that this property is equivalent to $i_{1}\circ F\circ P_{1}$ having the  structure of a fundamental network. Here we use in particular that $i_{1}\circ F\circ P_{1}$ is a vector field on $V^n$. This concludes the proof.
\end{proof}

\newpage
\section{Synchrony and the center manifold}\label{sec5}
Until now we have focused on developing a center manifold theory for fundamental networks. However, out of our three leading examples, only example {\bf A} is conjugate to its own fundamental network, while networks {\bf B} and {\bf C} are embedded in a fundamental network as a robust synchrony space. Moreover, by theorem \ref{inbed} this is true in general. The following theorem states that center manifold reduction respects robust synchrony spaces in a natural way.

\begin{thr}\label{T7}
Let  $\syn_P \subset V^n$ be a robust synchrony space  in a fundamental network. For every $\la_0 \in \Omega$, 
the map $P = (P_c,\Id): V^n \times \Omega \rightarrow W_c \times \Omega$ of theorem \ref{T5} maps the space
 \[ \{(x,\la) \in M_c: x \in  \syn_P, \la = \la_0  \}\]
bijectively onto the space 

\[\{(x,\la) \in W_c \times \Omega: x \in \syn_P, \la = \la_0  \}\, .\]
\end{thr}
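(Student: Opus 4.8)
The plan is to track the robust synchrony space $\syn_P$ through the two-step construction of $P = P' \circ \ul{P}_c$ from the proof of Theorem \ref{T5}, using at each step the fact that $\syn_P$ is $\{A_{\sigma_i}\}$-invariant and, being a robust synchrony space, is invariant under every admissible vector field, in particular under $\ul{\Gamma}_f$, its linearisation $D\ul{\Gamma}_f(0)$, and hence under the spectral projections $\ul{P}_c, \ul{P}_h, P_c, P_h$. First I would pass to the augmented space: set $\ul{\syn}_P := \syn_P \times \Omega \subset V^n \times \Omega$, which is invariant under $\ul{\Gamma}_f$ (since $\ul{\Gamma}_f = (\Gamma_f, 0)$ and $\Gamma_f$ preserves $\syn_P$ for every admissible $f$, including the modified $h + g_\rho$), and therefore invariant under its flow $\phi^t$ and under $D\ul{\Gamma}_f(0)$, hence under $\ul{P}_c$ and $\ul{P}_h$. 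The key consequence is that the center manifold $M_c$ intersects $\ul{\syn}_P$ ``cleanly'': because $M_c = \{\ul{x}_c + \psi(\ul{x}_c) : \ul{x}_c \in \ul{W}_c\}$ and $\psi$ is determined by the flow (Theorem \ref{T1}), the $\{A_{\sigma_i}\}$-equivariance argument of Lemma \ref{L1} applied with $B$ replaced by the projection $\ul{P}_P$ onto $\ul{\syn}_P$ along an invariant complement — or, more directly, the characterisation $M_c = \{\ul{x} : \sup_t \|\ul{P}_h \phi^t(\ul{x})\| < \infty\}$ together with the invariance of $\ul{\syn}_P$ under both $\phi^t$ and $\ul{P}_h$ — shows that $\psi$ maps $\ul{W}_c \cap \ul{\syn}_P$ into $\ul{W}_h \cap \ul{\syn}_P$, so that $M_c \cap \ul{\syn}_P = \{\ul{x}_c + \psi(\ul{x}_c) : \ul{x}_c \in \ul{W}_c \cap \ul{\syn}_P\}$ is precisely the center manifold of $\ul{\Gamma}_f|_{\ul{\syn}_P}$.

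Next I would show that $\ul{P}_c|_{M_c}$ restricts to a bijection from $M_c \cap \ul{\syn}_P$ onto $\ul{W}_c \cap \ul{\syn}_P$. The map $\ul{P}_c|_{M_c}: M_c \to \ul{W}_c$ is already a bijection by Theorem \ref{T5}; since it carries $M_c \cap \ul{\syn}_P$ into $\ul{W}_c \cap \ul{\syn}_P$ (as $\ul{P}_c$ preserves $\ul{\syn}_P$) and its inverse $\ul{x}_c \mapsto \ul{x}_c + \psi(\ul{x}_c)$ carries $\ul{W}_c \cap \ul{\syn}_P$ back into $M_c \cap \ul{\syn}_P$ (by the previous paragraph), this is immediate. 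Then I would apply $P'$: recall $P'(x,\la) = (P_c(x), \la)$ is a bijection $\ul{W}_c \to W_c \times \Omega$, and since $\syn_P$ is $P_c$-invariant, $P'$ maps $\ul{W}_c \cap (\syn_P \times \Omega)$ bijectively onto $(W_c \cap \syn_P) \times \Omega$; its inverse $P'^{-1}(x_c, \la) = (x_c + Q(\la), \la)$ with $Q(\la_i) = w_i \in W_h$ sends $(W_c \cap \syn_P) \times \Omega$ back into $\ul{W}_c \cap (\syn_P \times \Omega)$ provided the vectors $w_i$ lie in $\syn_P$ — this needs a brief check, which follows because the $w_i$ are the $W_h$-components of generalised $0$-eigenvectors $\ul{w}'_i$ of $D\ul{\Gamma}_f(0)$ and one may choose these eigenvectors inside $\ul{\syn}_P$ (the matrix $D\ul{\Gamma}_f(0)$ preserves $\ul{\syn}_P$ and acts as $0$ on the $\Omega$-directions, so the relevant Jordan structure is inherited). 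Composing, $P = P' \circ \ul{P}_c$ maps $M_c \cap (\syn_P \times \Omega)$ bijectively onto $(W_c \cap \syn_P) \times \Omega$; intersecting with the slice $\{\la = \la_0\}$, which $P$ respects because it is the identity on the $\Omega$-component, gives exactly the claimed bijection.

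The main obstacle I anticipate is the clean-intersection claim $M_c \cap \ul{\syn}_P = \{\ul{x}_c + \psi(\ul{x}_c) : \ul{x}_c \in \ul{W}_c \cap \ul{\syn}_P\}$, i.e. that $\psi$ respects $\ul{\syn}_P$. One might hope to invoke Lemma \ref{L1} verbatim with $B$ a projection onto $\ul{\syn}_P$, but Lemma \ref{L1} is stated for linear maps $B$ commuting with $F$, and while such a projection does commute with $\ul{\Gamma}_{h+g_\rho}$ when it commutes with the linear part and with the equivariant nonlinearity, one should verify that a complement of $\ul{\syn}_P$ invariant under all of $\{\ul{A}_{\sigma_i}\}$ and under $D\ul{\Gamma}_f(0)$ exists; alternatively — and this is the route I would actually take — bypass the projection entirely and argue directly from the flow-characterisation of $M_c$ in \eqref{eq:4.4}: for $\ul{x} \in \ul{\syn}_P$ one has $\phi^t(\ul{x}) \in \ul{\syn}_P$ for all $t$, hence $\ul{P}_h \phi^t(\ul{x}) \in \ul{W}_h \cap \ul{\syn}_P$, so $\ul{x} \in M_c$ iff $\ul{P}_c(\ul{x}) \in \ul{W}_c \cap \ul{\syn}_P$ and then $\psi(\ul{P}_c \ul{x}) = \ul{P}_h \ul{x} \in \ul{W}_h \cap \ul{\syn}_P$. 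This avoids any delicate decomposition argument. The rest is routine bookkeeping with the already-established properties of $P'$ and $\ul{P}_c$.
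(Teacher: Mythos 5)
Your route is genuinely different from the paper's, and most of it is sound: passing to $\ul{\syn}_P=\syn_P\times\Omega$, noting its invariance under $\ul{\Gamma}_{h+g_\rho}$ and hence under $D\ul{\Gamma}_f(0)$ and the spectral projections, and checking that $P'$ and $P'^{-1}$ respect it (your claim that the vectors $w_i$ of Lemma \ref{L2} lie in $\syn_P$ is correct, since the analogue of Lemma \ref{L2} for the restriction of the block operator to $\ul{\syn}_P$ produces, for each $\la_i$, a vector $(w_i',\la_i)\in \ul{W}_c$ with $w_i'\in W_h\cap\syn_P$, and such a $w_i$ is unique). The one step that does not hold up as written is the ``clean intersection'' claim. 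Your flow-characterisation argument shows: if $\ul{x}\in M_c\cap\ul{\syn}_P$, then $\ul{P}_c\ul{x}\in\ul{W}_c\cap\ul{\syn}_P$ and $\psi(\ul{P}_c\ul{x})=\ul{P}_h\ul{x}\in\ul{\syn}_P$. That is only one inclusion. What you actually need for bijectivity is the converse: for \emph{every} $\ul{x}_c\in\ul{W}_c\cap\ul{\syn}_P$, the point $\ul{x}_c+\psi(\ul{x}_c)$ lies in $\ul{\syn}_P$. The characterisation \eqref{eq:4.4} by itself does not produce a bounded orbit inside $\ul{\syn}_P$ over each such $\ul{x}_c$; the ``iff'' in your last paragraph is a non sequitur ($\ul{P}_c\ul{x}\in\ul{W}_c\cap\ul{\syn}_P$ holds for every $\ul{x}\in\ul{\syn}_P$, bounded orbit or not). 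The gap is repairable: apply Theorem \ref{T1} a second time, to the restriction of the cut-off vector field to the invariant subspace $\ul{\syn}_P$ with the splitting $\ul{\syn}_P=(\ul{W}_c\cap\ul{\syn}_P)\oplus(\ul{W}_h\cap\ul{\syn}_P)$; its center manifold equals $M_c\cap\ul{\syn}_P$ by \eqref{eq:4.4} and is a graph over all of $\ul{W}_c\cap\ul{\syn}_P$, and uniqueness of the graph representation of $M_c$ then forces $\psi(\ul{W}_c\cap\ul{\syn}_P)\subset\ul{W}_h\cap\ul{\syn}_P$. So a second invocation of the center manifold theorem is needed, not just the characterisation of $M_c$.

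It is worth comparing with the paper's proof, which avoids $\psi$, the flow, and all of the linear-algebraic bookkeeping. Fixing $\la=\la_0$, the map $P|_{M_c}:M_c\to W_c$ is an $\{A_{\sigma_i}\}$-equivariant bijection between $\{A_{\sigma_i}\}$-invariant sets (Theorem \ref{T5} and Lemma \ref{L1}). Hence $i_c\circ P:V^n\to V^n$ and $i_{M_c}\circ(P|_{M_c})^{-1}\circ P_c:V^n\to V^n$ are $\{A_{\sigma_i}\}$-equivariant self-maps of $V^n$, so by Theorem \ref{symfun} each is a fundamental network vector field, and by the very definition of a robust synchrony space each maps $\syn_P$ into itself. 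This gives both inclusions at once — in particular the surjectivity direction that is the weak point of your argument — with no need to re-run the center manifold construction on the subspace. Your approach buys a more concrete, ``hands-on'' description of $M_c\cap\ul{\syn}_P$ as the center manifold of the restricted system (which is useful information in its own right), at the cost of the extra analytic step above; the paper's approach is shorter precisely because Theorem \ref{symfun} converts equivariance into admissibility, and admissibility into preservation of $\syn_P$, for free.
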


\begin{proof}
Recall from theorem \ref{T5} that $P = P' \circ \ul{P}_c$ is an $\{\ul{A}_{\sigma_i} \}$-equivariant map that sends a vector $(x,\la) $ in $V^n \times \Omega$ to a vector in $W_c \times \Omega$ with the same $\la$-component. Therefore, keeping $\la = \la_0$ fixed we may think of $P$ as an $\{A_{\sigma_i} \}$-equivariant map from $V^n$ to $W_c$. Let us likewise use $M_c$ to denote what is really $\{(x,\la) \in M_c: \la = \la_0\}$. It follows from theorem \ref{T5} that, under these identifications, $P|_{M_c}: M_c \rightarrow W_c$ is an $\{A_{\sigma_i} \}$-equivariant bijection between $\{A_{\sigma_i}\}$-invariant sets. We will keep these identifications throughout this proof. In particular, what we want to show in this notation is that $P$ maps $M_c \cap \syn_P$ bijectively onto $W_c \cap \syn_P$.

For this purpose, let us denote by $i_c: W_c \rightarrow V^n$ the inclusion of $W_c$ into $V^n = W_c \oplus W_h$. The map $i_c \circ P$ is now an $\{A_{\sigma_i} \}$-equivariant map from $V^n$ into itself. Therefore, we may conclude by theorem \ref{symfun} that it is a fundamental network vector field. In particular, we see that it maps $\syn_P$ into itself and from this we conclude that $P|_{M_c}$ maps $M_c \cap \syn_P$ into $W_c \cap \syn_P$.

On the other hand, it follows that $(P|_{M_c})^{-1}: W_c \rightarrow M_c$ is $\{A_{\sigma_i}\}$-equivariant as well. Therefore, so is the function $i_{M_c} \circ (P|_{M_c})^{-1} \circ P_c: V^n \rightarrow V^n$, where we use $i_{M_c}:M_c \rightarrow V^n$ to denote the natural inclusion of $M_c$ into $V^n$. As before, we conclude that $i_{M_c} \circ (P|_{M_c})^{-1} \circ P_c$ is a fundamental network vector field and therefore sends $\syn_P$ into itself. From this it follows that $(P|_{M_c})^{-1}$ maps $W_c \cap \syn_P$ into $M_c \cap \syn_P$ and we conclude that this happens in fact bijectively. This proves the theorem.
\end{proof}
\noindent 
Recall that the linearisation of a fundamental network vector field gives rise to a decomposition of $V^n$ into invariant subspaces $W_c$ and $W_h$. As the possible dynamics on the former subspace is completely determined by the action of $\Sigma$ on this space, we may conclude that isomorphic splittings of $V^n$ into $W_c$ and $W_h$ give rise to conjugate dynamics and therefore equivalent bifurcations. However, this reasoning seems to lose sight of (robust) synchrony spaces, such as the one representing the original network vector field in its fundamental one. The following theorem settles this, as it tells us that synchrony spaces do behave well under choosing different decompositions of $V^n$ into invariant subspaces.

\begin{thr}\label{T8}
Let $\{W_i\}_{i=1}^k$ and $\{W'_i\}_{i=1}^k$ be two sets of $\{A_{\sigma_i}\}$-invariant subspaces of $V^n$ such that 
\begin{equation}
V^n = \bigoplus_{i=1}^k W_i = \bigoplus_{i=1}^k W'_i \, .
\end{equation}
Suppose furthermore that for every $i$, $W_i$ and $W'_i$ are isomorphic as $\{A_{\sigma_i}\}$-invariant subspaces. Then, for any robust synchrony space $\syn_P$ and any isomorphism $\phi_j:W_j \rightarrow W'_j$, it holds that $\phi_j$ restricts to a bijection between $\syn_P \cap W_j$ and $\syn_P \cap W'_j$. In particular, for every $j$ there exists an isomorphism between $W_j$ and $W'_j$ respecting $\syn_P$ in this way.
\end{thr}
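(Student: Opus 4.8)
The plan is to reduce everything to an application of Theorem \ref{T6} (or, equivalently, Theorem \ref{symfun}), exactly in the spirit of the proof of Theorem \ref{T7}. Fix the index $j$ and an isomorphism $\phi_j : W_j \to W_j'$ of $\{A_{\sigma_i}\}$-representations. The first step is to promote $\phi_j$ to an endomorphism of the whole space: let $P_j : V^n \to W_j$ be the projection associated with the decomposition $V^n = \bigoplus_i W_i$, let $i_j' : W_j' \to V^n$ be the inclusion associated with $V^n = \bigoplus_i W_i'$, and consider
\begin{equation}\label{eq:T8a}
\Phi_j := i_j' \circ \phi_j \circ P_j : V^n \to V^n \, .
\end{equation}
Since $P_j$, $\phi_j$ and $i_j'$ are each $\{A_{\sigma_i}\}$-equivariant (the projections and inclusions because the summands are invariant, and $\phi_j$ by hypothesis), the composition $\Phi_j$ is $\{A_{\sigma_i}\}$-equivariant. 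By Theorem \ref{symfun} (applied with $F = \Phi_j$, which is genuinely a map $V^n \to V^n$), $\Phi_j$ is a fundamental network vector field — in fact a linear one — and hence it maps every robust synchrony space $\syn_P$ into itself: $\Phi_j(\syn_P) \subset \syn_P$.

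The second step is to record what $\Phi_j$ does to the subspace $W_j$. On $W_j$ the projection $P_j$ is the identity, so $\Phi_j|_{W_j} = \phi_j$ (identifying $W_j'$ with its image under $i_j'$), and $\Phi_j$ maps $W_j$ isomorphically onto $W_j'$. Combining this with $\Phi_j(\syn_P) \subset \syn_P$ from the first step, we get that $\phi_j$ maps $\syn_P \cap W_j$ into $\syn_P \cap W_j'$. To get the reverse inclusion — and hence bijectivity — we run the same argument with the roles of the two decompositions swapped: $\phi_j^{-1} : W_j' \to W_j$ is also an isomorphism of $\{A_{\sigma_i}\}$-representations, so $\Psi_j := i_j \circ \phi_j^{-1} \circ P_j' : V^n \to V^n$ (with $P_j' : V^n \to W_j'$ the projection for the primed decomposition and $i_j : W_j \to V^n$ the inclusion for the unprimed one) is again $\{A_{\sigma_i}\}$-equivariant, hence a fundamental network vector field by Theorem \ref{symfun}, hence preserves $\syn_P$; and $\Psi_j|_{W_j'} = \phi_j^{-1}$. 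Therefore $\phi_j^{-1}(\syn_P \cap W_j') \subset \syn_P \cap W_j$, and together with the previous inclusion this shows $\phi_j$ restricts to a bijection $\syn_P \cap W_j \xrightarrow{\ \sim\ } \syn_P \cap W_j'$. The final clause of the theorem is immediate: by hypothesis such an isomorphism $\phi_j$ exists for each $j$, and we have just shown any such $\phi_j$ automatically respects $\syn_P$.

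The only subtlety — and the step I would watch most carefully — is making sure the projections and inclusions are genuinely equivariant and that $\Phi_j$ is genuinely a self-map of $V^n$ so that Theorem \ref{symfun} applies; this is why one must "pad" $\phi_j$ by $i_j'$ and $P_j$ rather than work with $\phi_j$ directly, precisely the trick used in the proof of Theorem \ref{T7} for $i_c \circ P$ and $i_{M_c}\circ(P|_{M_c})^{-1}\circ P_c$. No measure-theoretic or analytic input is needed here; the entire content is that robust synchrony spaces are, by Theorem \ref{symfun}, exactly the subspaces preserved by \emph{all} equivariant self-maps of $V^n$, and linear equivariant maps built from the given data are such self-maps. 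There is no serious obstacle beyond this bookkeeping.
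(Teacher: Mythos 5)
Your proof is correct and follows essentially the same strategy as the paper's: extend $\phi_j$ to an $\{A_{\sigma_i}\}$-equivariant linear self-map of $V^n$, invoke theorem \ref{symfun} to conclude that this map is a (linear) fundamental network vector field and hence preserves $\syn_P$, then repeat with $\phi_j^{-1}$ to get bijectivity. The only cosmetic difference is that the paper extends $\phi_j$ to all of $V^n$ as the direct sum $\Phi = \bigoplus_i \phi_i$ of isomorphisms on every summand, whereas you pad it as $i_j'\circ\phi_j\circ P_j$ (zero on the complementary summands); both extensions are equivariant and the argument goes through identically.
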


\begin{proof}
It is clear that if we have proven that any isomorphism between $W_j$ and $W'_j$ respects $\syn_P$, that we have then shown that there exists an isomorphism respecting this synchrony space. This is because $W_j$ and $W'_j$ are isomorphic, i.e. there exists (at least one) isomorphism between them. Let $\phi_j$ now be an isomorphism between $W_j$ and $W'_j$. By choosing for every $i \not= j$  an isomorphism $\phi_i$ between $W_i$ and $W'_i$, we can define the function $\Phi:V^n \rightarrow V^n$ given by

\begin{equation}
\Phi: \sum_{i=1}^k x_i \mapsto \sum_{i=1}^k \phi_i(x_i) \, ,
\end{equation} 
for $x_i \in W_i$. First of all, because this map is an $\{A_{\sigma_i}\}$-equivariant map by construction, we conclude that it is in fact a fundamental network vector field. In particular, it sends $\syn_P$ to itself. Secondly, because it sends an element in $W_j$ to an element in $W'_j$, we conclude that $\Phi$ sends the space $\syn_P \cap W_j$ into the space $\syn_P \cap W'_j$. Lastly, because $\Phi|_{W_j} = \phi_j$ we conclude that $\phi_j$ sends $\syn_P \cap W_j$ into $\syn_P \cap W'_j$. By the same argument we see that $\phi_j^{-1}$ sends $\syn_P \cap W'_j$ into $\syn_P \cap W_j$, from which it follows that this happens in fact bijectively. This concludes the proof.\end{proof}

\begin{remk}
If we are given two decompositions  of $V^n$ into invariant subspaces 
\begin{equation}
V^n = W_c \oplus W_h= W'_c \oplus W'_h\quad
\end{equation}
and if we know that $W_c$ and $W'_c$ are isomorphic, then it follows that the same holds true for $W_h$ and $W'_h$.  Namely, writing $W_c$, $W'_c$, $W_h$ and $W'_h$ as the direct sum of indecomposable representations, we get two indecomposable splittings of $V^n$. By the Krull-Schmidt theorem such a splitting is unique, from which it follows that $W_h$ and $W'_h$ are indeed isomorphic as well. 
 \hfill$\triangle$
\end{remk}
\noindent We now have a recipe for classifying the generic bifurcations of a homogeneous coupled cell network. One has to go through the following steps:

\begin{itemize}
\item One  first constructs the fundamental network of the homogeneous network.
\item Next, one determines all possible representation types of generic center subspaces $W_c$ that can occur in a bifurcation.
\item After that, one determines all possible reduced vector fields of the fundamental network on $W_c$. This is equivalent  to finding all the equivariant vector fields on $W_c$. As it turns out, an efficient way of finding these is by using that $F: W_c \rightarrow W_c$ is symmetric if and only if $i_c \circ F \circ P_c :V^n \rightarrow V^n$ is a fundamental network vector field. See theorem \ref{T6}.
\item Finally, theorem \ref{T7} tells us that the dynamics on the center manifold of the original network  can be found by restricting the dynamics on the center manifold of the fundamental network to an appropriate synchrony space. Namely, we know that the dynamics of the original network vector field is embedded as a robust synchrony space inside the fundamental network and that center manifold reduction respects it.
\end{itemize}
Note that if one finds two decompositions $V^n  = W_c \oplus W_h   = W'_c \oplus W'_h$ such that $W_c$ and $W'_c$ are isomorphic as representations of $\Sigma$, then for any bifurcation that occurs along  $W_c$ there is an equivalent bifurcation along $W'_c$. By theorem \ref{T8} this equivalence respects robust synchrony spaces, and in particular the one that represents the original network.

\section{Examples}\label{sec6}

In this section, we illustrate the machinery that we have developed. We will show which co-dimension one steady state bifurcations one can expect in networks {\bf B} and {\bf C} when the phase space of a single cell is $V = \R$. In particular, it will become clear that the difference in generic bifurcations can be explained from the representations of the symmetry semigroups. For network {\bf A}, this was already shown in \cite{RinkSanders2} with the help of normal form theory.

\subsection{Network {\bf B}}\label{sec6B}
Recall from Section \ref{sec1} that network {\bf B} is realised as the robust synchrony space $\{X_2 = X_3\}$ inside the fundamental network 
\begin{equation} \label{vierfun2}
\begin{split}
\dot{X}_1 &= f(X_1,  X_{2}, X_3, X_4) \\
\dot{X}_2 &= f(X_2, X_{4}, X_3, X_4) \\
\dot{X}_3 &=f(X_3,  X_4, X_3, X_4)  \\
\dot{X}_4 &= f(X_4, X_{4}, X_3, X_4) 
\end{split}
\end{equation} 
where it can be found by setting $X_1 = x_1$, $X_2 = X_3 = x_2$ and $X_4 = x_3$. For the moment, we suppress the dependence of $f$ on the parameter $\la$ in our notation. Equation (\ref{vierfun2}) describes all vector fields on $\R^4$ that commute with the maps 
\begin{equation}
\begin{split}
&(X_1,X_2, X_3,X_4) \mapsto (X_2,X_4,X_3,X_4)\, ,\\
&(X_1,X_2, X_3,X_4) \mapsto (X_3,X_4,X_3,X_4)\, , \\
&(X_1,X_2, X_3,X_4) \mapsto (X_4 ,X_4,X_3,X_4)\, . 
\end{split}
\end{equation}
It can  be shown \cite{fibr} that any decomposition of $\R^4$ into indecomposable representations of these symmetries is isomorphic to the splitting
\begin{equation}\label{split2}
\R^4 = \{X_1 = X_2 = X_3 = X_4\} \oplus \{X_4 = 0\}
\end{equation}
with corresponding projections given by 
\begin{equation}
P(X_1, X_2, X_3,X_4) = (X_4,X_4,X_4,X_4)
\end{equation}
and
\begin{equation}
Q(X_1, X_2, X_3,X_4) = (X_1 - X_4,X_2 - X_4,X_3 - X_4,0)\, .
\end{equation}
Let us first assume that the center subspace is isomorphic to the  subrepresentation $\syn_0 := \{X_1 = X_2 = X_3 = X_4\}$. Theorem \ref{T6} says that a reduced vector field $F=F(X): \syn_0 \rightarrow \syn_0$ is equivariant  if and only if 
\begin{equation}\label{eq:5:26}
(i_{\syn_0} \circ F \circ P)(X_1, X_2, X_3, X_4) = (F(X_4), F(X_4), F(X_4), F(X_4))
\end{equation}
is a fundamental network vector field. As this is clearly the case, we see that there are no constraints on $F$. In particular, the bifurcation problem reduces to solving $F(X,\la) = 0$ given that $F(0,0) = 0$ and $D_{X}F(0,0) = 0$. This will generically yield a fully synchronous saddle node bifurcation.

Now for the representation $\{X_4 = 0\}$: if we parametrize it by $X_1$, $X_2$ and $X_3$, then a general vector field on this space can be written as 

\begin{equation}F(X_1,X_2,X_3) = \begin{pmatrix*}[c] F_1(X_1,X_2,X_3) \\ F_2(X_1,X_2,X_3)  \\ F_3(X_1,X_2,X_3)  \end{pmatrix*} \, . \end{equation} 
According to theorem \ref{T6}, the expression
\begin{equation}
i_{\{X_4 = 0\}} \circ F \circ Q(X_1, X_2, X_3, X_4) = \begin{pmatrix*}[c] F_1(X_1-X_4, X_2-X_4, X_3-X_4) \\ F_2(X_1-X_4, X_2-X_4, X_3-X_4) \\ F_3(X_1-X_4, X_2-X_4, X_3-X_4) \\ 0 \end{pmatrix*}
\end{equation}
must be a fundamental network vector field. Using that a fundamental network vector field is  determined by its first component, we obtain the equalities
\begin{equation}
{\footnotesize \begin{pmatrix*}[c] F_1(X_1-X_4, X_2-X_4, X_3-X_4) \\ F_2(X_1-X_4, X_2-X_4, X_3-X_4) \\ F_3(X_1-X_4, X_2-X_4, X_3-X_4) \\ 0 \end{pmatrix*} = \begin{pmatrix*}[l] F_1(X_1-X_4, X_2-X_4, X_3-X_4) \\ F_1(X_2-X_4,0, X_3-X_4) \\ F_1(X_3-X_4, 0, X_3-X_4) \\ F_1(0,0,X_3-X_4) \end{pmatrix*} \, . 
}
\end{equation}
Therefore, a general equivariant vector field on $\{X_4 = 0 \}$ is given by 

\begin{equation}F(X_1,X_2,X_3) = \begin{pmatrix*}[l] F_1(X_1,X_2,X_3) \\ F_1(X_2, 0 ,X_3)  \\ F_1(X_3, 0 ,X_3)  \end{pmatrix*} \, , \end{equation} with the additional condition that $F_1(0,0,X_3) = 0$. Since this means that we may set $F_1(X_1,X_2,X_3) = X_2G(X_1,X_2,X_3) + X_1H(X_1,X_3)$, we can write

\begin{equation}F(X_1,X_2,X_3)= \begin{pmatrix*}[r] X_2G(X_1,X_2,X_3) + X_1H(X_1,X_3) \\  X_2H(X_2,X_3)   \\X_3H(X_3,X_3)  \end{pmatrix*} \, . \end{equation} 
Recall also that we are only interested in the dynamics on the synchrony space $\{X_2 = X_3\}$. We thus have to solve the equations

\begin{equation} \label{red2}\begin{matrix*}[r] X_2G(X_1,X_2) + X_1H(X_1,X_2) = 0 \\  X_2H(X_2,X_2) = 0 \end{matrix*} \ . \end{equation} 
To solve these, let us include the parameter in our notation again and write 
\begin{equation}
G(X_1, X_2,\la) = C + \mathcal{O}(|X_1|+ |X_2|+ |\la|) \, ,
\end{equation}
and
\begin{equation}
H(X_1, X_2,\la) = a_1X_1 + a_2X_2 + a_{3}\la + \mathcal{O}(|X_1|^2+ |X_2|^2+ |\la|^2) \, .
\end{equation}
Note that $H(0,0,0) = 0$, which follows from the fact that the linearisation with respect to $X$ of the reduced vector field in (\ref{red2}) is noninvertible at the origin $X_1 = X_2 = \la = 0$. Focussing first on the second equation of (\ref{red2}), 
\begin{equation}
X_2H(X_2,X_2, \la) = X_2[(a_1+a_2)X_2+ a_{3}\la + \mathcal{O}( |X_2|^2+ |\la|^2)] = 0 \, ,
\end{equation}
we see that either $X_2 = 0$ or, if $a_1 + a_2 \not=0$, that $X_2 = X_2(\la) = -\frac{a_3}{a_1+a_2}\la + \mathcal{O}(|\la|^2)$ by the implicit function theorem. If we set $X_2 = 0$, then the first equation of (\ref{red2}) reduces to
\begin{equation}
X_1H(X_1,0, \la) = X_1[ a_1X_1+ a_{3}\la + \mathcal{O}(|X_1|^2 + |\la|^2)] = 0 \, .
\end{equation}
This either gives $X_1 = 0$ or $X_1 = X_1(\la) = -\frac{a_3}{a_1}\la + \mathcal{O}(|\la|^2)$ if $a_1 \not= 0$. If we set $X_2 = X_2(\la) = -\frac{a_3}{a_1+a_2}\la + \mathcal{O}(|\la|^2)$ then the first equation reduces to

\begin{equation}
-C\frac{a_3}{a_1+a_2}\la + a_1X_1^2 + \mathcal{O}(|X_1|^3 + |X_1||\la|+|\la|^2) = 0\, .
\end{equation}
Next, substituting $\la = \pm \mu^2$ and $X_1 = \mu Y$ gives us
\begin{equation}
\mp C\frac{a_3}{a_1+a_2}\mu^2 + a_1\mu^2Y^2 + \mathcal{O}(|\mu|^3) = 0\, ,
\end{equation}
or, after dividing by $\mu^2$,

\begin{equation}
\mp C\frac{a_3}{a_1+a_2} + a_1Y^2 + \mathcal{O}(|\mu|) = 0\, .
\end{equation}
For one choice of the sign in $\la = \pm \mu^2$ this gives no solutions with $\mu = 0$, whereas for the other we find the solutions
\begin{equation}
(Y,\mu) = \left( \pm \sqrt{\frac{|C a_3|}{|a_1(a_1 + a_2)|}},0 \right) \, .
\end{equation}
Assuming $C, a_3 \not=0$, the implicit function theorem now tells us that these solutions continue in $\mu$ as 

\begin{equation}
Y(\mu) = \pm \sqrt{\frac{|C a_3|}{|a_1(a_1 + a_2)|}} + \mathcal{O}(|\mu|) \, ,
\end{equation}
from which it follows that we have the branches 

\begin{equation}
X_1(\la) = \pm \sqrt{\frac{ C a_3}{a_1(a_1 + a_2)}\la} + \mathcal{O}(|\la|) \, .
\end{equation}
To summarise, we have found the following solutions to equation (\ref{red2}):
\begin{equation}
\begin{split}
&X_1(\la) = X_2(\la) = 0 \, , \\
&X_1(\la) = -\frac{a_3}{a_1}\la + \mathcal{O}(|\la|^2),  \ X_2(\la) = 0 \, , \\
&X_1(\la) = \pm \sqrt{\frac{ C a_3}{a_1(a_1 + a_2)}\la} + \mathcal{O}(|\la|), \ X_2(\la) = -\frac{a_3}{a_1+a_2}\la + \mathcal{O}(|\la|^2) \, .
\end{split}
\end{equation}
Note that in all cases we have $X_4 = 0$, hence the first branch is fully synchronous, the second is partially synchronous and the last is fully asynchronous.  

To determine the stability of these branches, we linearise the vector field in (\ref{red2}) in the $X$-variables to obtain the Jacobian
\begin{equation}\nonumber
{\footnotesize \begin{pmatrix*}[l]
2a_1X_1 + a_2X_2 + a_3\la + \mathcal{O}(|X_1|^2 + |X_2| + |\la|^2) & C +\mathcal{O}(|X_1| + |X_2| + |\la|) \\
0 & 2(a_1 + a_2)X_2 + a_3\la  +  \mathcal{O}(|X_2|^2 + |\la|^2)
\end{pmatrix*} \, .}
\end{equation}
For the fully synchronous  branch, this Jacobian reduces to 
\begin{center}
\begin{equation}
\begin{pmatrix*}[l]
a_3\la + \mathcal{O}( |\la|^2) & C +\mathcal{O}(|\la|) \\
0 &  a_3\la  +  \mathcal{O}( |\la|^2)
\end{pmatrix*} \, ,
\end{equation}
\end{center}
hence we find two times the eigenvalue $a_3\la  +  \mathcal{O}( |\la|^2)$. Likewise, for the partially synchronous branch we find $a_3\la  +  \mathcal{O}( |\la|^2)$ and $-a_3\la  +  \mathcal{O}( |\la|^2)$. For the fully nonsynchronous one we find $\pm 2 a_1 \sqrt{\frac{ C a_3}{a_1(a_1 + a_2)}\la}+ \mathcal{O}( |\la|)$ and $-a_3\la  +  \mathcal{O}( |\la|^2)$. In particular, we see that the partially synchronous branch is always a saddle, and the fully synchronous branch can only give its stability to the fully nonsynchronous one.

Recalling that network {\bf B} can be obtained from network $\widetilde{\bf B}$ by making the identifications $X_1 = x_1$, $X_2 = X_3 = x_2$ and $X_4 = x_3$, and using that the center subspace in $\widetilde{\bf B}$ is given by $\{ X_4 = 0\}$, the above analysis proves the claims  on network {\bf B} of the introduction.

\subsection{Network {\bf C}}\label{sec6C}

Recall from Section \ref{sec1} that network {\bf C} is realised inside the fundamental network 
\begin{equation} \label{fund3}
\begin{split}
\dot{X}_1 &= f(X_1,  X_{2}, X_3, X_4, X_5) \\
\dot{X}_2 &= f(X_2, X_{4}, X_3, X_4, X_5) \\
\dot{X}_3 &=f(X_3,  X_5, X_3, X_4, X_5)  \\
\dot{X}_4 &= f(X_4, X_{4}, X_3, X_4, X_5) \\
\dot{X}_4 &= f(X_5, X_{4}, X_3, X_4, X_5) 
\end{split}
\end{equation} 
by setting $X_1 = X_3 = x_1$, $X_2 = X_5 = x_2$ and $X_4 = x_3$. This latter system describes all vector fields on $\R^5$ with the symmetries 
\begin{equation}
\begin{split}
&(X_1,X_2,X_3,X_4,X_5) \mapsto (X_2, X_{4}, X_3, X_4, X_5)\, , \\
&(X_1,X_2,X_3,X_4,X_5) \mapsto (X_3,  X_5, X_3, X_4, X_5)\, , \\
&(X_1,X_2,X_3,X_4,X_5) \mapsto (X_4, X_{4}, X_3, X_4, X_5)\, ,  \\
&(X_1,X_2,X_3,X_4,X_5) \mapsto (X_5, X_{4}, X_3, X_4, X_5) \, . 
\end{split}
\end{equation}
As shown in \cite{fibr}, the center and hyperbolic subspaces of its linearisation at a fully synchronous point will  generically define a splitting of $\R^5$ isomorphic to 
\begin{equation}\label{split3}
\R^5 = \{X_1 = \dots = X_5\} \oplus \{X_4 = 0\} \, .
\end{equation}
Projections corresponding to this decomposition are given by 
\begin{equation}
P(X_1, X_2, X_3,X_4,X_5) = (X_4,X_4,X_4,X_4,X_4)
\end{equation}
and
\begin{equation}
Q(X_1, X_2, X_3,X_4,X_5) = (X_1 - X_4,X_2 - X_4, X_3 -X_4, 0, X_5- X_4) \, .
\end{equation}
If we take the fully synchronous space to be the center subspace, then generically we again obtain a fully synchronous saddle node bifurcation, as was the case in network {\bf B} as well. If instead we take $\{X_4 = 0\}$ to be the center subspace, then a reduced vector field for the system (\ref{fund3}) corresponds to an equivariant vector field on this space. Following theorem \ref{T6}, these correspond to the functions $F = (F_1, F_2, F_3, F_5): \R^4 \rightarrow \R^4$ such that the expression

\begin{equation}\nonumber
 i_{\{X_4 = 0\}} \circ F \circ Q(X_1,  \dots ,X_5) = \begin{pmatrix*}[c] F_1(X_1-X_4, X_2-X_4, X_3-X_4, X_5-X_4) \\ F_2(X_1-X_4, X_2-X_4, X_3-X_4, X_5-X_4)  \\ F_3(X_1-X_4, X_2-X_4, X_3-X_4, X_5-X_4) \\0 \\ F_5(X_1-X_4, X_2-X_4, X_3-X_4, X_5-X_4)  \end{pmatrix*}
\end{equation}
is a fundamental network vector field. This yields the equalities
\begin{equation}
\begin{split}
&\begin{pmatrix*}[c] F_1(X_1-X_4, X_2-X_4, X_3-X_4, X_5-X_4) \\ F_2(X_1-X_4, X_2-X_4, X_3-X_4, X_5-X_4)  \\ F_3(X_1-X_4, X_2-X_4, X_3-X_4, X_5-X_4) \\0 \\ F_5(X_1-X_4, X_2-X_4, X_3-X_4, X_5-X_4)   \end{pmatrix*}  =\\
& \begin{pmatrix*}[l] F_1(X_1-X_4, X_2-X_4, X_3-X_4, X_5-X_4) \\ F_1(X_2-X_4, 0, X_3-X_4, X_5-X_4)  \\ F_1(X_3-X_4, X_5-X_4, X_3-X_4, X_5-X_4) \\F_1(0, 0, X_3-X_4, X_5-X_4)  \\ F_1(X_5-X_4, 0, X_3-X_4, X_5-X_4)   \end{pmatrix*}  \, .
\end{split}
\end{equation}
It follows that a general  equivariant vector field on $\{X_4 = 0 \}$ is of the form
\begin{equation}
F(X_1, X_2, X_3, X_5)  = \begin{pmatrix*}[l] F_1(X_1, X_2, X_3, X_5) \\ F_1(X_2, 0, X_3, X_5)  \\ F_1(X_3, X_5, X_3, X_5) \\ F_1(X_5, 0, X_3, X_5)   \end{pmatrix*}  
\end{equation}
with the additional condition that $F_1(0, 0, X_3, X_5) = 0$. This latter condition can be reformulated by writing 
\begin{equation}
F_1(X_1,X_2,X_3,X_5) = X_1G(X_1,X_3,X_5)  + X_2H(X_1,X_2,X_3,X_5)\, ,
\end{equation}
from which it follows that a general $\Sigma$-equivariant vector field has the form
\begin{equation}
F(X_1, X_2, X_3, X_5)  = \begin{pmatrix*}[l] X_1G(X_1,X_3,X_5)  + X_2H(X_1,X_2,X_3,X_5)\\ X_2G(X_2,X_3,X_5)  \\ X_3G(X_3,X_3,X_5)  + X_5H(X_3,X_5,X_3,X_5) \\ X_5G(X_5,X_3,X_5)    \end{pmatrix*}  \ .
\end{equation}
If we now restrict to network {\bf C}, i.e. to the  synchrony space $\{X_1 = X_3, X_2 = X_5 \}$, then the steady state problem reduces to solving the equations 
\begin{equation} \label{3verg}
\begin{split}
X_1G(X_1,X_1,X_2,\la)  + X_2H(X_1,X_2,\la) &= 0\, , \\ X_2G(X_2,X_1,X_2,\la)  &= 0 \, .
\end{split}
\end{equation}
At this point, we include the parameter again to investigate  generic steady state bifurcations. So we shall write
\begin{equation}
\begin{split}
&G(X_1,X_2,X_3,\la) = a_1X_1 + a_2X_2 + a_3X_3 + a_4\la + \mathcal{O}(|X|^2 + |\la|^2)\, ,  \\
&H(X_1,X_2, \la) = C + b_1X_1 + b_2X_2 + b_3\la + \mathcal{O}(|X|^2 + |\la|^2) \, .
\end{split}
\end{equation}
The second line in equation (\ref{3verg}) is solved when $X_2 = 0$ or when
\begin{equation}\label{onder3}
G(X_2, X_1,X_2,\la) = a_1X_2 + a_2X_1 + a_3X_2 + a_4\la + \mathcal{O}(|X|^2 + |\la|^2) = 0 \, .
\end{equation}
Assuming $a_2 \not=0$, the implicit function theorem then gives us that locally all the solutions to equation (\ref{onder3}) are given by $X_1 = X_1(X_2, \la) = -\frac{a_1 + a_3}{a_2}X_2 - \frac{a_4}{a_2}\la + \mathcal{O}(|X_2|^2 + |\la|^2)$.

Let us first assume that $X_2 =0$. The first line in equation (\ref{3verg}) is then solved when $X_1 = 0$ or when $X_1(\la) = \frac{-a_4}{a_1 + a_2}\la + \mathcal{O}(|\la|^2)$, assuming $a_1 + a_2 \not= 0$. Next, suppose we have the relation $X_1 = X_1(X_2, \la) = -\frac{a_1 + a_3}{a_2}X_2 - \frac{a_4}{a_2}\la +  \mathcal{O}(|X_2|^2 + |\la|^2)$. The first line in (\ref{3verg}) then becomes the equation 
\begin{equation}\nonumber 
\begin{split}
&\left[ -\frac{a_1 + a_3}{a_2}X_2 - \frac{a_4}{a_2}\la\right] \left((a_1 + a_2)\left[ -\frac{a_1 + a_3}{a_2}X_2 - \frac{a_4}{a_2}\la \right] + a_3X_2 + a_4\la \right) + \\
&X_2 \left(   C + b_1\left[-\frac{a_1 + a_3}{a_2}X_2 - \frac{a_4}{a_2}\la \right] + b_2X_2 + b_3\la  \right) + \mathcal{O}(|X_2|^3 + |\la|^3)= 0 \, ,
\end{split}
\end{equation}
which can be rewritten as

\begin{equation}
CX_2 + \frac{a_4^2 a_1}{a_2^2} \la^2+ \mathcal{O}(|X_2|^2 + |\la||X_2|+ |\la|^3) = 0\, .
\end{equation}
Hence, assuming $C \not= 0$, the implicit function theorem gives the solution
\begin{equation}
X_2 = X_2(\la) = \frac{-a_4^2 a_1}{C a_2^2 }\la^2 + \mathcal{O}(|\la|^3) \, .
\end{equation}
Combined with the  relation $X_1 = X_1(X_2, \la) = -\frac{a_1 + a_3}{a_2}X_2 - \frac{a_4}{a_2}\la + \mathcal{O}(|X_2|^2 + |\la|^2)$, we then get 
\begin{equation}
 X_1( \la) =   \frac{-a_4}{a_2}\la + \mathcal{O}( |\la|^2)\, .
\end{equation}
To summarise, we have found the three bifurcation branches
\begin{equation}\label{br3}
\begin{split}
&X_1(\la) = X_2(\la) = 0\, ,  \\
&X_1(\la) = \frac{-a_4}{a_1 + a_2}\la + \mathcal{O}(|\la|^2), \ X_2(\la) = 0\, ,\\
&X_1( \la) =   \frac{-a_4}{a_2}\la + \mathcal{O}( |\la|^2), \ X_2(\la) = \frac{-a_4^2 a_1}{C a_2^2 }\la^2 + \mathcal{O}(|\la|^3) \, ,
\end{split}
\end{equation}
where furthermore we have that $X_4 = 0$ in all three cases. Note that this makes the first branch fully synchronous, the second partially synchronous and the last fully non synchronous. Note however that this third branch is partially synchronous up to first order. 

A stability analysis similar to that in Section \ref{sec6B}  yields the eigenvalue $a_4\la + \mathcal{O}(|\la|^2)$ twice for the fully synchronous branch. We thus assume that $a_4\neq 0$. For the partially synchronous branch we then find the eigenvalues $-a_4\la + \mathcal{O}(|\la|^2)$ and $\frac{a_1a_4}{a_1+a_2}\la + \mathcal{O}(|\la|^2)$. For the fully non-synchronous branch we find $\beta_1\la + \mathcal{O}(|\la|^2)$ and $\beta_2\la + \mathcal{O}(|\la|^2)$, where $\beta_1$ and $\beta_2$ satisfy 
\begin{equation}
\beta_1 + \beta_2 = -a_4 \frac{2a_1 + a_2}{a_2} \quad \text{and} \quad \beta_1 \cdot \beta_2 = a_4^2\frac{a_1}{a_2} \, .
\end{equation}
Note that for positive values of $\frac{a_1}{a_2}$ the expression $\frac{2a_1 + a_2}{a_2} = 2\frac{a_1}{a_2} + 1$ is necessarily positive as well. Hence, the fully non-synchronous branch either takes over the stability of the fully synchronous one, or remains a saddle. The same holds true for the partially synchronous solution. However, when this latter branch gains the stability of the fully synchronous one then it must hold that $\frac{a_1}{a_1+a_2} < 0$. From this it follows that $\frac{a_2}{a_1} = \frac{a_1+a_2}{a_1} -1 < 0$ and we see that in this case the nonsynchronous branch is necessarily a saddle. We note that it is also possible that both the partially synchronous and the fully nonsynchronous branch are saddles, as there are values of $a_1$ and $a_2$ for which $\frac{a_1}{a_2}$ is negative, but $\frac{a_1}{a_1+a_2} = \left(\frac{a_2}{a_1} + 1\right)^{-1}$ is positive. 

As network {\bf C} is obtained from  the fundamental system (\ref{fund3}) by setting $X_1 = X_3 = x_1$, $X_2 = X_5 = x_2$ and $X_4 = x_3$, we see that the results obtained above hold for this former system under these identifications. This proves the claims on network {\bf C} of the introduction. 

  \bibliography{CoupledNetworks}
\bibliographystyle{amsplain}

\end{document}